\theoremstyle{plain}\newtheorem{Theorem}{Theorem}[section]
\theoremstyle{plain}\newtheorem{Corollary}[Theorem]{Corollary}
\theoremstyle{plain}\newtheorem{Lemma}[Theorem]{Lemma}
\theoremstyle{plain}\newtheorem{Definition}[Theorem]{Definition}
\theoremstyle{plain}\newtheorem{Proposition}[Theorem]{Proposition}
\theoremstyle{plain}\newtheorem{Assumption}[Theorem]{Assumption}
\theoremstyle{plain}\newtheorem*{Theorem*}{Theorem}
\theoremstyle{remark}\newtheorem{remark}[Theorem]{Remark}
\theoremstyle{remark}\newtheorem*{remark*}{Remark}
\newcommand{\Hom}{\text{Hom}}
\newcommand{\id}{\text{id}}
\newcommand{\dist}{\text{dist}}
\newcommand{\supp}{\text{supp}}
\newcommand{\grad}{\text{grad}}
\address {Department of Mathematics, Harvard University, Cambridge, MA 02138, USA}                                                   
\email {bzhang@math.harvard.edu}
\title[Rectifiability for the zero loci of $\mathbb{Z}/2$ harmonic spinors]{Rectifiability and Minkowski bounds for the zero loci of $\mathbb{Z}/2$ harmonic spinors in dimension $4$}
\author{Boyu Zhang}
\begin{document}

\begin{abstract}
This article proves that the zero locus of a $\mathbb{Z}/2$ harmonic spinor on a $4$ dimensional manifold is $2$-rectifiable and has locally finite Minkowski content.
\end{abstract}

\maketitle

\section{Introduction}
\subsection{Background}
The notion of $\mathbb{Z}/2$ harmonic spinors was first introduced by Taubes \cite{taubes2012psl, taubesZ2} to describe the behaviour of certain non-convergent sequences of flat $PSL_2(\mathbb{C})$ connections on a three manifold. It also appears in the compactifications of the moduli spaces of solutions to Kapustin-Witten equations \cite{taubes2013compactness}, Vafa-Witten equations \cite{taubes2017behavior}, and Seiberg-Witten equations with multiple spinors \cite{haydys2015compactness, taubes2016behavior}.
These equations may have important topological applications. For example, Witten \cite{witten2014two} has conjectured that the space of solutions to the Kapustin-Witten equations can be used to compute the Jones polynomials and the Khovanov homology for knots. Haydys \cite{haydys2017g2} conjectured a relation between the multiple spinor Seiberg-Witten monopoles, Fueter sections, and $G2$ instantons. More recently, Doan and Walpuski \cite{doan2017counting} conjectured a relation between generalized Seiberg-Witten equations and counting of associative manifolds on $G2$ manifolds.

All of these applications require better understanding of the compactifications for the relevant moduli spaces. The zero locus of $\mathbb{Z}/2$ harmonic spinor plays a crucial role in the description of the boundaries of the compactifications. It is the set of points where the sequence of solutions blow up after normalizations. Takahashi \cite{takahashi2015moduli, takahashi2017index}
studied the moduli spaces of $\mathbb{Z}/2$ harmonic spinors with additional regularity assumptions on the zero locus, where the zero locus was assumed to be a union of embedded circles in the case of dimension $3$, and an embedded surface in the case of dimension $4$. In general, the zero locus may not have this regularity. Taubes \cite{taubesZ2} proved that the zero locus must have Hausdorff codimension at least $2$. This article improves the regularity result by proving that the zero locus is rectifiable and has locally finite Minkowski content. The arguments are inspired by \cite{rec}, where a similar problem was studied for Dir-minimizing $Q$-valued functions.  The proof relies on a general method developed recently by Naber and Valtorta \cite{naber2015rectifiable}.

\subsection{Statement of results}
Let $X$ be a 4-dimensional Riemannian manifold. Let $\mathcal{V}$ be a Clifford bundle over $X$. That is, $\mathcal{V}$ is a unitary vector bundle equipped with an extra structure $\rho\in \Hom(TX,\Hom(\mathcal{V},\mathcal{V}))$, such that $\rho(e)^2=-\|e\|^2\cdot\id$ and $\|\rho(e)(u)\|=\|e\|\cdot\|u\|$ for every $e\in T_pX$ and $u\in\mathcal{V}|_p$. Let $\nabla$ be a connection on $V$ that is compatible with $(X,\mathcal{V},\rho)$. Namely, for every pair of smooth vector fields $e$, $e'$, and every smooth section $u$ of $\mathcal{V}$, one has
$$
\nabla_e(\rho(e')\cdot u)=\rho(\nabla_e e') \cdot u + \rho(e')\cdot\nabla_e(u).
$$
The Dirac operator on $\mathcal{V}$ is defined by
$$
D(u)=\sum_{i=1}^4 \rho(e_i) \nabla_{e_i} u,
$$
where $\{e_i\}$ is a local orthonormal frame for $TX$.

Let $Q$ be a positive integer. For a vector space $E$, define $\mathcal{A}_Q(E)$ to be the set of unordered $Q$-tuples of points in $E$. If $P_1, P_2,\cdots, P_Q$ are $Q$ points in $E$, use $\sum_{i=1}^Q [\![P_i]\!]\in\mathcal{A}_Q(E)$ to denote the $Q$-tuple given by the collection of $P_i$'s. If $E$ is endowed with a Euclidean metric, one can define a metric on $\mathcal{A}_Q(E)$ by 
$$
\dist\big(\sum_i [\![P_i]\!],\, \sum_i [\![S_i]\!]\big)=\min_{\sigma\in\mathcal{P}_Q}\sqrt{\sum_i |P_i-S_{\sigma(i)}|^2},
$$
where $\mathcal{P}_Q$ is the permutation group of $\{1,2,\cdots,Q\}$. If $T\in\mathcal{A}_Q(E)$, define $|T|=\dist(T,Q[\![0]\!])$.

A map from $X$ is called a $Q$-valued section of $\mathcal{V}$ if it maps every $x\in X$ to an element of $\mathcal{A}_Q(\mathcal{V}|_x)$. A $Q$-valued section is called continuous if it is continuous under local trivializations of $\mathcal{V}$.

\begin{Definition}
Let $U$ be a continuous $2$-valued section of $\mathcal{V}$. Then $U$ is called a $\mathbb{Z}/2$ harmonic spinor if the following conditions hold.
\begin{enumerate}
\item $U$ is not identically $2[\![0]\!]$.
\item Let $Z$ be the set of $U$ where $U=2[\![0]\!]$. For every $x\in X-Z$, there exists a neighborhood of $x$, such that on this neighborhood $U$ can be written as $U=[\![u]\!]+[\![-u]\!]$, where $u$ is a smooth section of $\mathcal{V}$ satisfying $D(u)=0$.
\item Near a point $x\in X-Z$, write $U$ as $[\![u]\!]+[\![-u]\!]$, then the function $|\nabla u|$ is a well defined smooth function on $X-Z$. The section $U$ satisfies
$$
\int_{X-Z} |\nabla u|^2<\infty.
$$
\end{enumerate}
\end{Definition}

This definition is equivalent to the definition of $\mathbb{Z}/2$ harmonic spinors given in \cite{taubesZ2}. 

For a point $x\in X$ and $r>0$, use $B_x(r)$ to denote the geodesic ball in $X$ with center $x$ and radius $r$. As in (1.5) of \cite{taubesZ2}, we make the following additional assumption on $U$.

\begin{Assumption}\label{assumption}
There exits a constant $\epsilon >0$ such that the following holds. For every $x\in X$ with $U(x)=2[\![0]\!]$, there exist constants $C,r_0>0$, depending on $x$, such that
$$
\int_{B_x(r)} |U(y)|^2 \,dy < C\cdot r^{4+\epsilon},\quad \text{for every }r\in(0,r_0). 
$$
\end{Assumption}

Assume $U$ is a $\mathbb{Z}/2$ harmonic spinor, and let $Z$ be the set of $U$ where $U=2[\![0]\!]$. Taubes \cite{taubesZ2} proved the following theorem.
\begin{Theorem}[Taubes \cite{taubesZ2}]\label{thm: Taubes}
If $U$ satisfies assumption \ref{assumption}, then the Hausdorff dimension of $Z$ is at most 2.
\end{Theorem}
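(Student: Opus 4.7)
The plan is to use an Almgren-type frequency function together with Federer dimension reduction, the standard blueprint for singular sets of elliptic problems and its multi-valued adaptation (in the spirit of the $Q$-valued Dir-minimizer paper cited in the introduction). Although $U=[\![u]\!]+[\![-u]\!]$ is two-valued, the functions $|U|^2=2|u|^2$ and $|\nabla u|^2$ descend to well-defined functions on $X$, continuous (with $Z\subset\{|U|^2=0\}$) and smooth off $Z$; the finite-energy hypothesis $\int|\nabla u|^2<\infty$ lets one justify integration by parts across $Z$ via a capacity/cutoff argument since $Z$ is closed and of zero $1$-capacity.

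For $x\in X$ and small $r>0$ I would define
\[
H(x,r) = \int_{\partial B_x(r)}|U|^2,\qquad D(x,r) = \int_{B_x(r)}|\nabla u|^2,\qquad N(x,r) = \frac{rD(x,r)}{H(x,r)}.
\]
Using $Du=0$ together with the Weitzenb\"ock identity $D^*Du=\nabla^*\nabla u+\mathcal{R}u$ (where $\mathcal{R}$ is a curvature endomorphism), one derives, exactly as in Almgren's original treatment, an approximate monotonicity $\tfrac{d}{dr}\log N(x,r)\geq -C$ for $r$ small, the error being controlled by the curvatures of $X$ and $\mathcal{V}$. Thus $e^{Cr}N(x,r)$ is non-decreasing and $N(x,0^+):=\lim_{r\to 0^+}N(x,r)$ exists. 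The companion identity $\tfrac{d}{dr}\log H(x,r)=3/r+2N(x,r)/r+O(1)$ pins the sharp power-decay rate of $H(x,r)$ to $r^{3+2N(x,0^+)}$; confronting this with Assumption~\ref{assumption} rules out the degenerate case $N(x,0^+)=0$ and produces a uniform positive lower bound $N(x,0^+)\geq\eta_0>0$ for every $x\in Z$.

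Next I would blow up. For $x\in Z$ define rescalings $U_{x,r}(y)=\lambda_{x,r}^{-1}U(\exp_x(ry))$ on the unit ball of $T_xX$, with $\lambda_{x,r}^2=H(x,r)/r^{3}$. Uniform control of $N$ gives uniform $W^{1,2}$ and, via H\"older estimates for two-valued harmonic-type sections, uniform $C^{0,\alpha}$ bounds. Along a subsequence $r_k\to 0$ the $U_{x,r_k}$ converge locally to a continuous two-valued section $\tilde U$ on $\mathbb{R}^4$ that solves the Euclidean Dirac equation, and the equality case of the monotonicity formula forces $\tilde U$ to be radially homogeneous of positive degree. I then stratify $Z$ according to the largest dimension of the translation-invariance subspace of any tangent map,
\[
Z = \mathcal{S}^0 \cup \mathcal{S}^1 \cup \mathcal{S}^2 \cup \mathcal{S}^3,
\]
so that the abstract Federer--Almgren reduction lemma gives $\dim_H \mathcal{S}^k\leq k$. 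It remains to exclude $\mathcal{S}^3$: a tangent map invariant under a $3$-dimensional translation subgroup depends only on one real variable, and the Dirac equation in one variable reads $\rho(e_1)\partial_1\tilde U=0$, so $\tilde U$ is constant; the positive homogeneity then forces $\tilde U\equiv 0$, contradicting the $L^2$ normalization on the unit sphere. Therefore $\dim_H Z\leq 2$.

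The main technical obstacle I expect is the blow-up step: justifying integration by parts near the a priori wild closed set $Z$, and upgrading subsequential convergence of the rescalings to genuine homogeneity of the limit. Both steps require quantitative regularity of the two-valued section close to its branch locus, of a kind that is by now standard for $Q$-valued Dir-minimizers but needs adaptation to the spinor/Dirac setting. Once those are in place, the exclusion of $\mathcal{S}^3$ tangent maps is a short algebraic observation, and the remainder of the dimension-reduction argument is routine.
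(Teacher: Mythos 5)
The paper does not prove Theorem \ref{thm: Taubes}: it is quoted from Taubes \cite{taubesZ2} and used as an external input (in Lemma \ref{l: lower bound on der at d-1 directions} and in the arguments of Section 8), so there is no internal proof to compare yours against. Judged on its own terms, your blueprint --- approximate monotonicity of the Almgren frequency, a positive lower bound on $N(x,0^+)$ extracted from Assumption \ref{assumption}, blow-up to homogeneous tangent maps, Federer--Almgren dimension reduction, and the algebraic exclusion of tangent maps with $3$-dimensional translation invariance --- is the standard and essentially correct route to this statement, and it matches in outline both Taubes's strategy and the machinery this paper develops for the stronger rectifiability theorem (Lemma \ref{lem: Sobolev}, the monotonicity formulas of Section 3, the compactness Proposition \ref{p: compact}, and the rigidity of translation-invariant homogeneous limits exploited in Section 8).

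Two points in your plan are not yet sound as stated. First, justifying integration by parts across $Z$ ``since $Z$ is closed and of zero $1$-capacity'' is circular: a priori you know nothing about the size of $Z$, and its smallness is precisely what is being proved. The correct device, used in Lemma \ref{lem: Sobolev}, is the logarithmic cutoff $\tau_s=\chi(\ln|U|/\ln s)$ built from $|U|$ itself, for which $|\nabla\tau_s|\,|u|\le C\,|\ln s|^{-1}|\nabla u|$; this needs only $\int|\nabla u|^2<\infty$ and no prior information about $Z$. Second, ``the equality case of the monotonicity formula forces $\tilde U$ to be radially homogeneous'' requires \emph{strong} $W^{1,2}$ convergence of the rescalings, so that the frequency of the limit equals the limit of the frequencies; this is the genuinely hard analytic step (here it is Proposition \ref{p: compact}, resting on the uniform H\"older bounds from \cite{taubesZ2}), and your proposal correctly flags it as the main obstacle but does not supply it. A smaller remark: the lower bound $N(x,0^+)\ge\epsilon/2$ is uniform only because the exponent $\epsilon$ in Assumption \ref{assumption} is independent of $x$ (the constants $C,r_0$ there are not), though for the qualitative dimension bound a pointwise positive bound already suffices. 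With these repairs the argument is the expected one.
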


This article improves theorem \ref{thm: Taubes} to the following result.
\begin{Theorem}\label{thm: main}
If $U$ satisfies assumption \ref{assumption}, then $Z$ is a $2$-rectifiable set. Moreover, for every compact subset $A\subset X$, there exist constants $C$ and $r_0$ depending on $A$ and $Z$, such that for every $r<r_0$,
$$\text{Vol }(\{x:\dist(x,A\cap Z)<r\})<C\cdot r^2.$$
\end{Theorem}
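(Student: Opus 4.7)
The plan is to transplant the Naber--Valtorta quantitative stratification scheme of \cite{naber2015rectifiable}, as adapted to multi-valued Dir-minimizers in \cite{rec}, to the setting of $\mathbb{Z}/2$ harmonic spinors. The backbone of the argument is an Almgren-type frequency function. Working in a geodesic ball $B_{x_0}(r_0)$ with $x_0\in Z$, in a synchronous frame for $\mathcal{V}$, set
$$H_{x_0}(r)=\int_{\partial B_{x_0}(r)}|U|^2,\quad E_{x_0}(r)=\int_{B_{x_0}(r)\setminus Z}|\nabla u|^2,\quad N_{x_0}(r)=\frac{rE_{x_0}(r)}{H_{x_0}(r)}.$$
The first task is to prove that $e^{Cr}N_{x_0}(r)$ is nondecreasing on $(0,r_0)$, using $Du=0$, integration by parts, and the Weitzenb\"ock formula. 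Assumption \ref{assumption} then forces a definite lower bound $N_{x_0}(0^{+})\ge\epsilon/2$, and blow-ups at $x_0\in Z$ yield homogeneous tangent spinors on $\mathbb{R}^4$ of degree $N_{x_0}(0^{+})$.

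Next I would introduce the quantitative stratification. A homogeneous $\mathbb{Z}/2$ harmonic spinor $V$ on $\mathbb{R}^4$ is called $k$-symmetric if it is additionally translation invariant along a $k$-plane; say $U$ is $(\eta,k)$-symmetric at $x$ at scale $r$ if the rescaled $U_{x,r}$ is $L^2$-close on $B_1$, within $\eta$, to some $k$-symmetric model. Define
$$Z^k_{\eta,r}=\{x\in Z : U\text{ is not }(\eta,k+1)\text{-symmetric at }x\text{ at any scale }s\in[r,1]\}.$$
Theorem \ref{thm: Taubes} combined with a Federer dimension-reduction argument shows that no nontrivial model is $3$-symmetric, so $Z=Z^2$. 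The two key lemmas here are a \emph{cone splitting} principle (almost $0$-symmetry about $k+1$ affinely independent points forces almost $k$-symmetry) and \emph{effective tangent-cone uniqueness} (the frequency pinching $N_x(2r)-N_x(r)$ controls the $L^2$ deviation of $U_{x,r}$ from being conical); both are quantitative consequences of the monotonicity identity from Step~1.

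The heart of the argument is the $L^2$-best approximation theorem in the style of Naber--Valtorta: for any finite positive Borel measure $\mu$ supported in $Z^2_{\eta,r}\cap B_{x_0}(r)$,
$$\int\inf_{L\ni x}\int\dist(y,L)^2\,d\mu(y)\,d\mu(x)\le Cr^2\int\bigl(N_x(8r)-N_x(r)\bigr)\,d\mu(x),$$
where $L$ ranges over affine $2$-planes through $x$. Feeding this into the rectifiable Reifenberg theorem of \cite{naber2015rectifiable} covers $Z^2_{\eta,r}\cap A$ by countably many Lipschitz images of $\mathbb{R}^2$ with uniformly bounded $\mathcal{H}^2$ mass, yielding both rectifiability and the Minkowski bound $\text{Vol}(\{y:\dist(y,Z^2_{\eta,r}\cap A)<s\})\le Cs^2$. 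Since the frequency is uniformly bounded above on compact sets by monotonicity, countably many choices with $\eta\to 0$ exhaust $Z\cap A$ up to an $\mathcal{H}^2$-null set, finishing the proof.

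The main obstacle I anticipate is Step~1, because the branch locus $Z$ obstructs a direct integration by parts on $B_{x_0}(r)$. The workaround will be to work on $B_{x_0}(r)\setminus\{y:\dist(y,Z)<\delta\}$, control the boundary contributions near $Z$ using the codimension-$2$ estimate underlying Theorem~\ref{thm: Taubes}, and pass $\delta\to 0$. Extracting the quantitative frequency-drop estimate needed for the $L^2$-best approximation theorem in a form that is uniform up to $Z$, and doing so compatibly with the cone-splitting lemma, will absorb most of the technical effort; Assumption \ref{assumption} enters decisively here by giving a uniform positive lower bound on the frequency throughout $Z$.
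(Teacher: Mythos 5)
Your overall strategy is the one the paper actually follows: frequency monotonicity, a quantitative pinching estimate controlling $L^2$-flatness of measures supported on $Z$, the Naber--Valtorta rectifiable Reifenberg theorem, and cone-splitting/compactness arguments adapted from \cite{rec}. The pieces you list correspond closely to the paper's Sections 3--8 (the paper implements ``effective tangent-cone uniqueness'' as Proposition \ref{prop: pinching} and Lemma \ref{lem: pinching with fixed center}, ``cone splitting'' as the compactness-based Lemmas \ref{l: cor of compactness 1} and \ref{l: cor of compactness 2}, and your $L^2$-best approximation inequality as Proposition \ref{p: estimate of D}). Your anticipated difficulty with integration by parts across $Z$ is resolved in the paper by the strong $W^{1,2}$ approximation of $U$ by smooth two-valued sections (Lemma \ref{lem: smooth approx}) rather than by excising a neighborhood of $Z$, but either route is workable.

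There is, however, a genuine gap in your final step. You obtain the Minkowski bound only for the quantitative strata $Z^2_{\eta,r}\cap A$ and then pass to all of $Z\cap A$ by letting $\eta\to 0$ over countably many choices, ``up to an $\mathcal{H}^2$-null set.'' That suffices for rectifiability, but it cannot yield the stated volume bound $\text{Vol}(\{x:\dist(x,A\cap Z)<r\})<Cr^2$: Minkowski content is not subadditive over countable unions, and an $\mathcal{H}^2$-null set can have infinite $2$-dimensional Minkowski content (indeed positive Lebesgue measure of its $r$-neighborhoods at every scale). To cover \emph{all} of $Z\cap A$ at every scale $s$ with balls of controlled total $2$-volume, you need the finite induction on the frequency that occupies Steps 1--6 of the paper's proof of Theorem \ref{thm: covering argument}: at each stage a ball is either ``good'' (three nearly-top-frequency points $\bar\beta r$-span a plane, so the Reifenberg packing applies and one refines the scale) or ``bad'' (the nearly-top-frequency points lie near a line, so one re-covers with the frequency ceiling lowered from $\Lambda$ to $\Lambda-\delta$); since the frequency is bounded and drops by a fixed $\delta$ in each bad pass, the process terminates after $\lceil\Lambda/\delta\rceil$ iterations and produces a covering of the entire set by balls of radius comparable to $s$ with total $2$-volume $\le K$. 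Without this termination mechanism your argument proves rectifiability and local $\mathcal{H}^2$-finiteness of the strata, but not the Minkowski estimate asserted in the theorem.
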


In other words, $Z$ is a $2$-rectifiable set with locally finite 2 dimensional Minkowski content. Since the Minkowski content controls the Hausdorff measure, theorem \ref{thm: main} implies that $Z$ has locally finite 2 dimensional Hausdorff measure.

Theorem \ref{thm: main} immediately implies that the zero locus of a $\mathbb{Z}/2$ harmonic spinor on a $3$-manifold is 1-rectifiable and has locally finite Minkowski content.

\section*{Acknowledgments}
This project originated from an idea of Clifford Taubes that one should be able to apply the techniques for Dir-minimizing $Q$-valued functions to the study of $\mathbb{Z}/2$ harmonic spinors. I would like to express my most sincere gratitude for his insightful guidance and encouragement. I also want to thank Thomas Walpuski and Aaron Naber for many helpful discussions and correspondences.

\section{$\mathbb{Z}/2$ harmonic spinors as Sobolev sections}
Almgren \cite{almgren2000almgren} developed a Sobolev theory for $Q$-valued functions on $\mathbb{R}^m$. For a quicker introduction, one can see for example \cite{Qrevisit}. For an open set $\Omega\subset\mathbb{R}^m$, the space $W^{1,2}(\Omega,\mathcal{A}_Q)$ is defined to be the space of $Q$ valued functions $T$ on $\Omega$, such that $|T|\in L^2(\Omega)$, and that $T$ has distributional derivatives which are also in $L^2(\Omega)$. The Sobolev theory extends to $Q$-valued sections of vector bundles without any difficulty. This section proves the following lemma. 

\begin{Lemma}\label{lem: Sobolev}
If $U$ is a $\mathbb{Z}/2$ harmonic spinor, then $U$ is in $W^{1,2}(X,\mathcal{A}_2)$. Moreover, $D(U)=0$ in the distributional sense.
\end{Lemma}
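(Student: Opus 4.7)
The plan is to prove both the Sobolev membership and the distributional Dirac equation by a single approximation argument, using cutoff functions built from the scalar $|U|$, which is continuous on $X$ and vanishes exactly on $Z$. Note first that $U$ continuous implies $|U|\in L^\infty_{\mathrm{loc}}\subset L^2_{\mathrm{loc}}$, and that the pointwise quantity $|\nabla U|^2:=2|\nabla u|^2$ (with $U=[\![u]\!]+[\![-u]\!]$ locally) is well-defined and sign-invariant on $X\setminus Z$, and $L^2$-integrable by hypothesis~(3) of the definition. The task is to verify that the classical data on $X\setminus Z$ already furnishes the distributional derivative on $X$ and that the distributional Dirac equation persists across $Z$.

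For each $\delta>0$ I would pick a smooth $\phi_\delta:[0,\infty)\to[0,1]$ with $\phi_\delta\equiv 0$ on $[0,\delta]$, $\phi_\delta\equiv 1$ on $[2\delta,\infty)$, and $|\phi_\delta'|\le 2/\delta$, and set $\chi_\delta=\phi_\delta(|U|)$. Then $\chi_\delta$ is smooth on $X\setminus Z$, vanishes on an open neighbourhood of $Z$, and has $|\nabla\chi_\delta|\le(2/\delta)|\nabla U|$ supported on $\{\delta\le|U|\le 2\delta\}$. Working in a local trivialization where $U=[\![u]\!]+[\![-u]\!]$ is represented by a smooth $u$ with $Du=0$, for any compactly supported test section $\phi$ the Leibniz rule $D(\chi_\delta\phi)=\chi_\delta D\phi+\rho(\nabla\chi_\delta)\phi$ together with classical integration by parts (legitimate because $\chi_\delta\phi$ is supported away from $Z$) gives
\[
\int\chi_\delta\,\langle u, D\phi\rangle\;=\;-\int\langle u,\,\rho(\nabla\chi_\delta)\,\phi\rangle.
\]
As $\delta\to 0$ the left side converges to $\int\langle u, D\phi\rangle$ by dominated convergence---$\chi_\delta\to 1$ outside $Z$, $|u|$ and $|D\phi|$ are bounded on $\supp\phi$, and $Z$ has Lebesgue measure zero since $\dim_{\mathcal H}Z\le 2<4$ by Theorem~\ref{thm: Taubes}. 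On the right, $|u|\le\sqrt{2}\,\delta$ and $|\nabla\chi_\delta|\le(2\sqrt{2}/\delta)|\nabla u|$ on the support, so by Cauchy--Schwarz
\[
\Big|\!\int\!\langle u,\rho(\nabla\chi_\delta)\phi\rangle\Big|\;\le\;C\|\phi\|_\infty\,\mathrm{Vol}\bigl(\{|U|\le 2\delta\}\cap\supp\phi\bigr)^{1/2}\,\|\nabla u\|_{L^2}\;\longrightarrow\;0,
\]
because $\{|U|\le 2\delta\}\searrow Z$ is null. This shows $u$ solves the Dirac equation weakly across $Z$ in each local trivialization, i.e.\ $D(U)=0$ distributionally. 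The identical cutoff argument with $D$ replaced by a flat connection in the trivialization identifies the classical gradient of $U$ on $X\setminus Z$, extended by $0$ across $Z$, as a distributional derivative of $U$, giving $U\in W^{1,2}(X,\mathcal A_2)$.

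The main obstacle I foresee is the bookkeeping around the $2$-valued structure. A globally consistent single-valued branch $u$ of $U$ need not exist---the two sheets may interchange as one loops around $Z$---so the pairings $\langle u, D\phi\rangle$ above only make sense on local trivializations, and the formulation of ``$D(U)=0$ distributionally'' must be phrased so as to respect this. I expect this to be routine, since every quantity I actually take to a limit is either sign-invariant ($|U|$, $|\nabla U|$) or a local scalar that can be glued via a partition of unity on $X\setminus Z$; the singular set $Z$ never contributes because of the volume estimate $\mathrm{Vol}(\{|U|\le 2\delta\})\to 0$.
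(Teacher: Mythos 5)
Your core estimate is the same one the paper uses: cut off at a scale tied to $|U|$, observe that on the transition region $|u|\cdot|\nabla(\text{cutoff})|\lesssim|\nabla u|$, and let hypothesis (3) of the definition kill the error. The differences are in the packaging. The paper uses the logarithmic cutoff $\tau_s=\chi(\ln|U|/\ln s)$, which yields $|\nabla\tau_s||u|\le\frac{C}{|\ln s|}|\nabla u|$ and hence an error that is small in $L^2$ outright; your linear cutoff instead needs the transition region to shrink, and you obtain that by citing Theorem \ref{thm: Taubes} to make $Z$ Lebesgue-null. Note that Theorem \ref{thm: Taubes} requires Assumption \ref{assumption}, which Lemma \ref{lem: Sobolev} does not assume; you can avoid this dependence by bounding the error instead by $C\|\phi\|_\infty\,\mathrm{Vol}(\supp\phi)^{1/2}\,\|\nabla u\|_{L^2(\{0<|U|\le2\delta\})}$, which tends to $0$ because the sets $\{0<|U|\le2\delta\}$ decrease to $\emptyset$ and $|\nabla u|\in L^2(X\setminus Z)$. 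The second structural difference: you prove $D(U)=0$ by direct integration by parts, whereas the paper first gets $U\in W^{1,2}$ (via weak compactness of the smooth $2$-valued sections $\tau_sU$ together with their uniform convergence to $U$) and then deduces $D(U)=0$ a.e.\ from the classical equation off $Z$ plus the fact that the approximate derivatives of a $W^{1,2}$ $Q$-valued function vanish at Lebesgue points of $Z$.

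The one place where your argument is genuinely soft is the obstacle you flag and defer at the end. The pairing $\int\langle u,D\phi\rangle$ is odd under $u\mapsto-u$, so for a test section $\phi$ whose support meets $Z$ it is only defined if a single-valued branch of $U$ exists on $\supp\phi\setminus Z$ --- and it need not, precisely because the two sheets can interchange around $Z$. A partition of unity on $X\setminus Z$ does not repair this: the local contributions $\int\psi_\alpha\chi_\delta\langle u_\alpha,D\phi\rangle$ each flip sign with the choice of branch $u_\alpha$, so they cannot be consistently summed when the monodromy is nontrivial. This is why the paper never pairs $U$ against single-valued test sections: it works entirely inside Almgren's $W^{1,2}(\mathcal{A}_2)$ framework, where ``distributional derivative'' means the a.e.-defined approximate differential of the $2$-valued map, and the equation $D(U)=0$ is an a.e.\ pointwise statement about that differential. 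Your computation does contain the right content (it shows no singular contribution concentrates on $Z$), but to turn it into the lemma as stated you should either apply it to sign-invariant scalars such as $\dist(U,P)$, or to $\xi\circ U$ for Almgren's embedding $\xi$, or simply follow the paper and approximate $U$ by the globally defined smooth $2$-valued sections $\tau_sU$.
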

This lemma allows us to study the compactness properties of $\mathbb{Z}/2$ harmonic spinors by the Sobolev theory for $Q$-valued functions.

We start with the following definition.

\begin{Definition}
Let $T$ be a $Q$-valued section of $\mathcal{V}$. It is called a smooth $Q$-valued section, if for every $x\in X$, there exists a neighborhood of $x$ on which $T$ can be written as
$$
T=\sum_{i=1}^Q [\![f_i]\!],
$$
where $f_i$'s are smooth sections of $\mathcal{V}$.
\end{Definition}

If $T$ is a smooth $Q$-valued section and is locally written as $\sum_i [\![f_i]\!]$, then the function $\sum_i |f_i|^2+\sum_i |\nabla f_i|^2$ is well defined on $X$. In this case, the $W^{1,2}$ norm of $T$ is given by $(\int_X \sum_i |f_i|^2+\sum_i |\nabla f_i|^2)^{1/2}$.

\begin{proof}[Proof of lemma \ref{lem: Sobolev}]
The proof is essentially the same as lemma 2.4 of \cite{taubesZ2}.

Let $\chi$ be a smooth non-increasing function on $\mathbb{R}$, such that $\chi(t)=1$ when $t\le 1$, and $\chi(t)=0$ when $t\ge 2$. For $s>0$, let $\tau_s=\chi(\ln |U|/\ln s)$. Then $\tau_s(x)=0$ when $|U(x)|\le s^2$, and $\tau_s(x)=1$ when $|U(x)|\ge s$.

The section $\tau_s U$ is a $2$-valued smooth section of $\mathcal{V}$. Recall that on $X-Z$, the $\mathbb{Z}/2$ harmonic spinor $U$ can be locally written as $U=[\![u]\!]+[\![-u]\!]$. Although $u$ is only defined up to a sign, the functions $|u|$ and $| \tau_s \nabla u + \nabla \tau_s \cdot u |$ are well defined on $X-Z$. Thus the $W^{1,2}$ norm of $\tau_s  U$ is  given by
$$
\|\tau_s  U\|_{W^{1,2}}=\sqrt{2}\int_X (|\tau_s|^2 |u|^2 + | \tau_s \nabla u + \nabla \tau_s \cdot u |^2).$$
Notice that
$$
|\nabla \tau_s|\cdot |u|\le \frac{1}{|\ln s|} (\sup |\chi'|)\cdot |\nabla u|,
$$
hence its $L^2$ norm converges to zero as $s\to 0$.
Therefore, 
\begin{equation} \label{e: lim of L2 norm in weak approx}
\lim_{s\to 0} \|\tau_s  U\|_{W^{1,2}} = \sqrt{2} \int_{X-Z} (|u|^2+|\nabla u|^2).
\end{equation}
In particular, $\tau_s  U$ is bounded in $W^{1,2}$ as $s\to 0$, thus a subsequence of it weakly converges in $W^{1,2}$ to an element $U'\in W^{1,2}$. Since $\tau_s  U$ also uniformly converges to $U$, one must have $U'=U$. Therefore $U\in W^{1,2}$.

Since $D$ is a smooth first-order differential operator, $D(U)\in L^2_{loc}(X)$. By the definition of $\mathbb{Z}/2$ harmonic spinors, $D(U)=0$ on $X-Z$. By section 2.2.1 of \cite{Qrevisit}, the derivatives of $U$ are zero at the Lebesgue points of $Z$, hence $D(U)=0$ on those points. That proves $D(U)=0$ in the distributional sense.
\end{proof}

The argument of lemma 2.1 also shows that $U$ can be $W^{1,2}$ approximated by smooth sections. We write it as a separate lemma for later reference.

\begin{Lemma} \label{lem: smooth approx}
Let $U$ be a $\mathbb{Z}/2$ harmonic spinor. Then there exits a sequence of smooth sections $U_i$, such that $U_i=-U_i$, and 
$$\lim_{i\to\infty}U_i=U \text{ in } W^{1,2}.$$
\end{Lemma}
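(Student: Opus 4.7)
The plan is to extract the approximating sequence directly from the construction already used in the proof of Lemma~\ref{lem: Sobolev}. Choose any sequence $s_i \to 0$ with $s_i \in (0,1)$ and set $U_i := \tau_{s_i}\, U$, where $\tau_s = \chi(\ln|U|/\ln s)$ is the cutoff from that proof. Three things need to be checked: each $U_i$ is a smooth $2$-valued section, it satisfies the symmetry $U_i = -U_i$, and $U_i \to U$ strongly in $W^{1,2}$.

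Smoothness is immediate. On $X \setminus Z$, locally write $U = [\![u]\!] + [\![-u]\!]$ with $u$ a smooth section; then $U_i = [\![\tau_{s_i} u]\!] + [\![-\tau_{s_i} u]\!]$ is smooth as a $2$-valued section. On the open set $\{|U| \le s_i^2\}$, which is a neighborhood of $Z$, the cutoff $\tau_{s_i}$ vanishes identically, so $U_i \equiv 2[\![0]\!]$ there, and the two local descriptions agree on the overlap. The symmetry $U_i = -U_i$ (as unordered pairs) is visible from the local form $[\![\tau_{s_i} u]\!] + [\![-\tau_{s_i} u]\!]$, which is manifestly invariant under componentwise negation.

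For $W^{1,2}$ convergence, the proof of Lemma~\ref{lem: Sobolev} already shows that $\tau_{s_i} U \rightharpoonup U$ weakly and that $\|\tau_{s_i} U\|_{W^{1,2}}$ converges to $\|U\|_{W^{1,2}}$ via equation~(\ref{e: lim of L2 norm in weak approx}). Working locally in a trivialization and passing to the single-valued lift $u$, weak convergence together with norm convergence in the Hilbert space $W^{1,2}$ upgrades to strong convergence. Alternatively, one can verify strong convergence directly on a local patch: the pointwise $L^2$-distance is bounded by $\sqrt{2}\,(1-\tau_{s_i})|u|$ and the derivative difference by $(1-\tau_{s_i})|\nabla u| + |\nabla \tau_{s_i}|\,|u|$. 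Dominated convergence handles the terms with $(1-\tau_{s_i})$, and the estimate $|\nabla \tau_{s_i}|\,|u| \le (\sup|\chi'|/|\ln s_i|)\,|\nabla u|$ already recorded in the previous proof forces the remaining term to zero in $L^2$.

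The one mild technical point, rather than a genuine obstacle, is to identify strong $W^{1,2}$ convergence for $2$-valued sections with strong convergence of local single-valued lifts. This is harmless here because $0 \le \tau_{s_i} \le 1$ forces the optimal permutation in the definition of the metric on $\mathcal{A}_2$ to be the identity, so the distance computations reduce to the usual $W^{1,2}$ norm of $\tau_{s_i} u - u$ on each trivializing chart, and the estimates above apply verbatim.
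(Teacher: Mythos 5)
Your proposal is correct and follows essentially the same route as the paper: take $U_i=\tau_{s_i}U$, use the weak convergence and the limit of norms from \eqref{e: lim of L2 norm in weak approx} established in the proof of Lemma \ref{lem: Sobolev}, and upgrade to strong convergence via weak convergence plus convergence of norms. Your additional remarks (the direct dominated-convergence alternative and the observation that $0\le\tau_{s_i}\le 1$ makes the optimal permutation the identity) are careful elaborations of points the paper leaves implicit, but the argument is the same.
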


\begin{proof}
Since $|U|$ and $|\nabla U|$ are zero on the Lebesgue points of $Z$, one has
$$\| U\|_{W^{1,2}} = \int_{X-Z} (|U|^2+|\nabla U|^2)= \sqrt{2} \int_{X-Z} (|u|^2+|\nabla u|^2).$$

Define $\tau_s$ as in the proof of lemma \ref{lem: Sobolev}. It was proved previously that there is a sequence $s_i\to 0$, such that $\tau_{s_i}U$ converges weakly to $U$ in $W^{1,2}$. As a consequence,
$$
\liminf_{i\to\infty} \| \tau_{s_i} U\|_{W^{1,2}} \ge \| U\|_{W^{1,2}}
$$ 
On the other hand, by \eqref{e: lim of L2 norm in weak approx},
$$
\lim_{i\to\infty}\| \tau_{s_i} U\|_{W^{1,2}} = \sqrt{2} \int_{X-Z} (|u|^2+|\nabla u|^2)=\| U\|_{W^{1,2}}.
$$
Therefore $\tau_{s_i}U$ converges strongly to $U$ in $W^{1,2}$.
\end{proof}

\section{Frequency functions}
The frequency functions were first introduced by Amgren \cite{almgren1979dirichlet} to study the singular set of elliptic partial differential equations, and they were adapted by Taubes \cite{taubesZ2} to study the zero loci of $\mathbb{Z}/2$ harmonic spinors.
This section recalls some results about the frequency functions from \cite{taubesZ2}. 

Let $U$ be a $\mathbb{Z}/2$ harmonic spinor. On $X-Z$ the section $U$ can be locally written as $U=[\![u]\!]+[\![-u]\!]$. As before, we will use notations like $|u|$ and $|\nabla u|$ to denote the corresponding functions on $X-Z$ if they can be globally defined. The functions $|u|$ and $|\nabla u|$ extend to $X$ by defining them to be zero on $Z$.

The following $C^0$ estimate was established in \cite{taubesZ2}.

\begin{Lemma}[\cite{taubesZ2}, Lemma 2.3]\label{lem: C0 estimate}
Let $A\subset B$ be two open subsets of $X$, and assume the closure of $A$ is compact and contained in $B$. Then there exists a constant $K$, depending on $A$, $B$ and the norms of the curvatures of $X$ and $\mathcal{V}$, such that
$$
\sup_{x\in A} |u(x)|^2 \le K\int_B |u(x)|^2 \, dx.
$$
\end{Lemma}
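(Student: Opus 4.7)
The plan is to show that $f := |u|$ is a nonnegative $W^{1,2}$ weak subsolution of a Schr\"odinger-type elliptic inequality on $B$, and then invoke the classical Moser $L^\infty$-$L^2$ mean value inequality. The proof has three steps.

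\textbf{Step 1: pointwise inequality on $X - Z$.} On $X - Z$ pick a local smooth lift $u$ with $Du = 0$. The Weitzenb\"ock formula $D^2 = \nabla^*\nabla + \mathcal{R}$, where $\mathcal{R}$ is a bundle endomorphism bounded pointwise in terms of the curvatures of $X$ and $\mathcal{V}$ on $\bar B$, gives $\nabla^*\nabla u = -\mathcal{R} u$. Combining with the identity $\tfrac{1}{2}\Delta|u|^2 = |\nabla u|^2 - \langle \nabla^*\nabla u, u\rangle$ (where $\Delta$ is the metric Laplacian on functions) and Kato's inequality $|\nabla|u||^2 \le |\nabla u|^2$, one obtains at points where $u \ne 0$
\[
|u|\,\Delta|u| = |\nabla u|^2 - |\nabla|u||^2 + \langle\mathcal{R} u, u\rangle \ge -C_0|u|^2,
\]
hence $\Delta f + C_0 f \ge 0$ classically on the open set $X - Z$, with $C_0$ depending only on the curvatures on $\bar B$.

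\textbf{Step 2: weak inequality on all of $B$.} By Kato and Lemma \ref{lem: Sobolev}, $f \in W^{1,2}_{\mathrm{loc}}(X)$; it is continuous and vanishes on $Z$. Let $\phi_\epsilon : [0,\infty) \to [0,1]$ be a smooth nondecreasing cutoff with $\phi_\epsilon \equiv 0$ on $[0,\epsilon]$ and $\phi_\epsilon \equiv 1$ on $[2\epsilon,\infty)$, and set $\eta_\epsilon := \phi_\epsilon \circ f$, so that $\eta_\epsilon \in W^{1,2}$ is supported in $\{f \ge \epsilon\} \subset X - Z$ and $\nabla \eta_\epsilon = \phi_\epsilon'(f)\,\nabla f$. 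For nonnegative $\varphi \in C^\infty_c(B)$, testing the classical inequality of Step 1 against $\varphi\eta_\epsilon$ and integrating by parts (all on the open set $X - Z$ where $f$ is smooth) gives
\[
\int \eta_\epsilon\,\nabla f\cdot\nabla\varphi \,+\, \int \varphi\,\phi_\epsilon'(f)\,|\nabla f|^2 \,\le\, C_0\int f\varphi\eta_\epsilon.
\]
The middle term is nonnegative and can simply be dropped; letting $\epsilon \to 0$ and using dominated convergence together with the fact that $\nabla f = 0$ almost everywhere on $Z$ yields the weak subsolution inequality $\int \nabla f\cdot\nabla\varphi \le C_0\int f\varphi$ on $B$.

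\textbf{Step 3: Moser iteration.} With $f \ge 0$ a $W^{1,2}$ weak subsolution of $-\Delta - C_0$ on $B$, the classical Moser $L^\infty$-$L^2$ estimate for uniformly elliptic operators with bounded zeroth-order term, applied on finitely many balls covering $\bar A$ and compactly contained in $B$, produces $\sup_A f^2 \le K\int_B f^2$, as required. What would ordinarily be the main obstacle, namely extending the Weitzenb\"ock inequality across the singular set $Z$, is dispatched cleanly in Step 2 by choosing $\eta_\epsilon$ to be a function of $f$ itself: the resulting cross term $\phi_\epsilon'(f)|\nabla f|^2$ automatically has a favorable sign, so no delicate capacity estimate on $Z$ is needed.
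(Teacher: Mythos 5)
The paper does not prove this lemma itself; it is quoted from Taubes \cite{taubesZ2}, Lemma 2.3, and your argument is a correct reconstruction of essentially the same route: show $|u|$ is a weak subsolution of $\Delta + C_0$ across $Z$ and apply the standard elliptic $L^\infty$--$L^2$ mean value estimate. Your device of cutting off with $\phi_\epsilon(|u|)$ so that the cross term $\phi_\epsilon'(f)|\nabla f|^2$ has a favorable sign is the same mechanism Taubes exploits with his cutoff $\chi(\ln|u|/\ln s)$ (the function $\tau_s$ reused in the proof of Lemma \ref{lem: Sobolev} here), so there is nothing to flag.
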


Now introduce some notations. Fix a point $x_0\in X$. Take $R>0$ such that $B_{x_0}(500R)\subset X$ is complete, and that the injectivity radius of $X$ is greater than $1000R$ for every point in the ball $B_{x_0}(500R)$. 

Later on we will need to work on both the Euclidean space and the manifold $X$, so we need to differentiate the notations. We will use $B_x(r)$ to denote the geodesic ball on $X$ with center $x\in X$ and radius $r>0$. Use $\bar B_x(r)$ to denote the Euclidean ball with center $x$ in the Euclidean space and radius $r>0$. When the center is the origin, $\bar B(r)$ is also used to denote $\bar B_{0}(r)$. Use $d(x,y)$ to denote the distance function on $X$, and use $|x-y|$ to denote the distance function on $\mathbb{R}^4$.

For every $x\in B_{x_0}(500R)$, use the normal coordinate centered at $x$ to identify $B_{x}(500R)$ with the ball $\bar B(500R)\subset\mathbb{R}^4$. Let $g_x$ be the function of metric matrices on $\bar B(500R)$ corresponding to $B_x(500R)$. For each $z\in \bar B(500R)$, let $K_x(z),\kappa_x(z)$ be the largest and smallest eigenvalue of $g_x(z)$. Assume that $R$ is sufficiently small so that for every $x\in B_{x_0}(500R)$, $z\in \bar B(500R)$,
\begin{equation}\label{i: close to euclidean}
\big(\frac{11}{12}\big)^2\le \kappa_x(z) \le K_x(z) \le \big(\frac{12}{11}\big)^2
\end{equation}

In order to prove theorem \ref{thm: main}, one only needs to study the rectifiability and the Minkowski content of $Z\cap B_{x_0}(R/2)$. 

For $x\in B_{x_0}(500R)$, $r\in(0,500R]$, define the height function
$$
H(x,r)=\int_{\partial B_x(r)} |u|^2,
$$
then $H(x,r)$ is always positive \cite[Lemma 3.1]{taubesZ2}. Define
$$
D(x,r)=\int_{B_x(r)} |\nabla u|^2,
$$ 
and define the frequency function
$$
N(x,r)=\frac{rD(x,r)}{H(x,r)}.
$$
Section 3(a) of \cite{taubesZ2} proved the following monotonicity properties for $N$ and $H$:
\begin{Lemma}[\cite{taubesZ2}, (3.6) and Lemma 3.2]
The functions $N$ and $H$ are absolutely continuous with respect to $r$, and there exist constants $\kappa >0$ and $r_0>0$, depending only on the norms of curvatures of $X$ and $\mathcal{V}$ on $B_{x_0}(1000R)$, such that when $r\le r_0$,
\begin{align}
 \label{eqn: derivative of H}
\frac{\partial}{\partial r} H & \ge \frac{3}{r} H - \kappa rH, 
 \\
 \frac{\partial}{\partial r} N & \ge -\kappa r(1+N). \label{eqn: derivative of N}
 \\
  (\frac{N}{r} + \kappa r)\,\frac{H}{r^3}\ge & \frac{\partial}{\partial r}(\frac{H}{r^3}) \ge  (\frac{N}{r} - \kappa r)\,\frac{H}{r^3} \label{i: bound diviation of H by N}
\end{align}
\end{Lemma}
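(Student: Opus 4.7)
The strategy is to emulate the classical Almgren-Garofalo-Lin frequency-function computation for harmonic functions, carried out in geodesic polar coordinates centered at $x$, with all curvature contributions from $X$ and $\mathcal{V}$ tracked as $O(\kappa r)$ corrections to the flat identities (permitted by \eqref{i: close to euclidean}). The main technical input is the Lichnerowicz identity $D^*D = \nabla^*\nabla + \mathcal{R}$, where $\mathcal{R}$ is a bounded bundle endomorphism built from the curvatures of $X$ and $\mathcal{V}$; since $Du=0$, this gives $\nabla^*\nabla u = -\mathcal{R} u$, so $u$ effectively satisfies an elliptic equation of Laplace type.

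To prove \eqref{eqn: derivative of H}, write $H(x,r) = \int_{S^3} |u(\exp_x r\omega)|^2 J(r,\omega)\,d\omega$, where $J$ is the Jacobian of the exponential map, and differentiate under the integral. The radial derivative of $|u|^2$ produces $2\int_{\partial B_x(r)}\langle u,\nabla_\nu u\rangle\,d\sigma$, while the derivative of $J$ produces $(3/r)H$ up to a Rauch-comparison correction of size $O(\kappa r)H$. Integration by parts against $\nabla^*\nabla u = -\mathcal{R} u$ converts the boundary integral into $D(x,r)$ plus a bulk error controlled by $C\int_{B_x(r)}|u|^2$, which is absorbed into $O(\kappa r)H$ via the coarea identity $\int_{B_x(r)}|u|^2 = \int_0^r H(x,s)\,ds$ and a small-scale Gronwall bootstrap. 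This yields the sharper identity $\partial_r H = (3/r) H + 2 D + O(\kappa r) H$; dropping $2D \ge 0$ gives \eqref{eqn: derivative of H}, while direct substitution into $\partial_r(H/r^3) = r^{-3}(\partial_r H - (3/r)H)$ combined with $D = NH/r$ gives both directions of \eqref{i: bound diviation of H by N} after absorbing a numerical factor into $\kappa$.

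The essential step is \eqref{eqn: derivative of N}. Taking the logarithmic derivative $\partial_r N / N = 1/r + \partial_r D / D - \partial_r H / H$ and using the identity for $\partial_r H$ above reduces the problem to a lower bound for $\partial_r D$. A Pohozaev-Rellich identity for the Dirac operator, obtained by pairing $Du = 0$ with $\rho(\partial_r)\nabla_{\partial_r} u$ and integrating by parts over $B_x(r)$, relates $\int_{\partial B_x(r)}|\nabla_\nu u|^2\,d\sigma$ to $D(x,r)$, tangential boundary terms, and bulk curvature contributions. Combined with the Cauchy-Schwarz estimate
$$
\Bigl(\int_{\partial B_x(r)}\langle u,\nabla_\nu u\rangle\,d\sigma\Bigr)^2 \le H(x,r) \int_{\partial B_x(r)}|\nabla_\nu u|^2\,d\sigma,
$$
which is the spinorial analogue of the classical Almgren monotonicity, this produces \eqref{eqn: derivative of N}.

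The main obstacle is the Pohozaev computation itself on a curved manifold with a non-flat bundle: commuting $\nabla_{\partial_r}$ past Clifford multiplication $\rho(e_i)$ and past the connection derivatives $\nabla_{e_i}$ produces curvature endomorphisms of both $TX$ and $\mathcal{V}$, whose contributions to the bulk and boundary integrals must all be controlled by $\kappa r(1+N)$ times the main quantities. This bookkeeping is routine once set up but delicate, and is where Lemma \ref{lem: C0 estimate} enters most heavily, since it lets various weighted bulk integrals of $|u|^2$ be dominated by multiples of $H$.
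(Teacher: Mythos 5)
Your route is the intended one: the paper itself does not prove this lemma but quotes it from Taubes, and the identities you invoke --- the divergence identity \eqref{eqn: der of H in r}, the formula $\partial_r H = \tfrac{3}{r}H + 2D + (\text{curvature errors})$ reproduced as \eqref{eqn: der of H int by parts}, the Rellich--Pohozaev identity for $\int_{\partial B_x(s)}|\nabla u|^2$ quoted in the proof of \eqref{eqn: int by parts 2}, and the Cauchy--Schwarz step --- are exactly the ingredients of Taubes's Section 2(c)--(d) and 3(a), and of the smoothed analogues this paper carries out in Proposition \ref{prop: int by parts} and Corollary \ref{cor: der of N}. Your Gronwall remark correctly addresses the only circularity risk (bounding $\int_{B_x(r)}|u|^2=\int_0^r H(s)\,ds$ by $CrH(r)$ before the monotonicity of $H$ is available). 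One caveat: your computation gives $\partial_r(H/r^3)=\bigl(\tfrac{2N}{r}+O(\kappa r)\bigr)\tfrac{H}{r^3}$, and the factor $2$ multiplying $N/r$ cannot be ``absorbed into $\kappa$'' as you assert, since it sits on the leading term rather than on the $O(r)$ error; this is a normalization discrepancy with \eqref{i: bound diviation of H by N} as printed (harmless for its only downstream use, \eqref{i: bounded oscillation of H}), but your stated justification for reconciling the two is not valid as written.
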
 

By shrinking the size of $R$, we assume without loss of generality that $r_0= 500R$, hence inequalities \eqref{eqn: derivative of H}, \eqref{eqn: derivative of N}, and \eqref{i: bound diviation of H by N} hold for all $x\in B_{x_0}(500R)$ and $r\le 500R$.

Inequality \eqref{eqn: derivative of H} gives the following lemma
\begin{Lemma}[\cite{taubesZ2}, Lemma 3.1]\label{lem: mono of H}
There exists a constant $\kappa >0$, such that when $s<r<500R$,
$$
H(x,r)\ge \big(\frac{r}{s}\big)^3 \cdot e^{-\kappa (r^2-s^2)} \cdot H(x,s).
$$
\end{Lemma}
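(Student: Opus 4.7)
The plan is to view \eqref{eqn: derivative of H} as a scalar differential inequality for $H(x,\cdot)$ and integrate it by a Gronwall-type argument. Since it was already noted (citing \cite[Lemma 3.1]{taubesZ2}) that $H(x,r) > 0$ for all $r \in (0, 500R]$, the logarithmic derivative $\partial_r \log H$ is well defined, and the $H$ in the right-hand side of \eqref{eqn: derivative of H} can be safely divided through.

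First I would rewrite \eqref{eqn: derivative of H} in the form
\begin{equation*}
\frac{\partial}{\partial r}\log H(x,r) \;\ge\; \frac{3}{r} - \kappa r,
\end{equation*}
using that $H$ is absolutely continuous in $r$ (also noted in the preceding lemma). Then I would integrate both sides over the interval $[s,r] \subset (0,500R]$, obtaining
\begin{equation*}
\log H(x,r) - \log H(x,s) \;\ge\; 3\log(r/s) - \tfrac{\kappa}{2}(r^2 - s^2).
\end{equation*}
Exponentiating and absorbing the factor $1/2$ into a new constant (still called $\kappa$) gives exactly the claimed inequality
\begin{equation*}
H(x,r) \;\ge\; \bigl(r/s\bigr)^3 \, e^{-\kappa(r^2 - s^2)} \, H(x,s).
\end{equation*}

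There is no substantial obstacle, since all of the analytic work has been done in establishing \eqref{eqn: derivative of H}; what remains is a one-line ODE comparison argument. The only point requiring a brief remark is the positivity of $H(x,r)$ on $(0,500R]$, which justifies both the division by $H$ and the passage to the logarithmic derivative, but this has already been recorded in the excerpt and can simply be cited.
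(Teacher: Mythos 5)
Your proof is correct and is exactly the intended derivation: the paper itself gives no proof beyond noting that the lemma "follows from \eqref{eqn: derivative of H}," and your Gronwall-type integration of $\partial_r \log H \ge 3/r - \kappa r$ over $[s,r]$, justified by the positivity and absolute continuity of $H$, is precisely that one-line argument. The renaming of $\kappa/2$ as $\kappa$ at the end is harmless since the inequality only becomes weaker.
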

Inequality \eqref{eqn: derivative of N} gives

\begin{Lemma} \label{lem: mono of N}
There exists a constant $\kappa >0$, such that when $s<r<500R$,
$$
N(x,r)\ge e^{-\kappa(r^2-s^2)}N(x,s) - \kappa (r^2-s^2).
$$
\end{Lemma}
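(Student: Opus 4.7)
The plan is to integrate the differential inequality \eqref{eqn: derivative of N} via a Gr\"onwall-type argument, working with the auxiliary function $1+N(x,r)$ to linearize it.

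First, observe that \eqref{eqn: derivative of N} can be rewritten as
\begin{equation*}
\frac{\partial}{\partial r}(1+N(x,r)) \ge -\kappa r \cdot (1+N(x,r)).
\end{equation*}
Since $N$ is absolutely continuous in $r$, I would multiply both sides by the integrating factor $e^{\kappa r^2/2}$ to obtain
\begin{equation*}
\frac{\partial}{\partial r}\Bigl(e^{\kappa r^2/2}(1+N(x,r))\Bigr)\ge 0,
\end{equation*}
which shows that $e^{\kappa r^2/2}(1+N(x,r))$ is non-decreasing in $r$ on $(0,500R]$.

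Next, for $s<r\le 500R$, integrating this monotonicity statement yields
\begin{equation*}
1+N(x,r) \ge e^{-\kappa(r^2-s^2)/2}\bigl(1+N(x,s)\bigr),
\end{equation*}
i.e.
\begin{equation*}
N(x,r) \ge e^{-\kappa(r^2-s^2)/2} N(x,s) + \bigl(e^{-\kappa(r^2-s^2)/2}-1\bigr).
\end{equation*}
Finally, I would use the elementary estimate $e^{-t}-1 \ge -t$ for $t\ge 0$, applied with $t=\kappa(r^2-s^2)/2$, to bound the last term below by $-\kappa(r^2-s^2)/2$. After relabeling the constant $\kappa$ (absorbing the factor of $1/2$), this gives exactly the claimed inequality.

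No step looks like a serious obstacle: the hardest piece of content has already been proved in the excerpt as \eqref{eqn: derivative of N}, and the remaining work is a direct application of the integrating-factor method. The only thing to be slightly careful about is that $\kappa$ in the statement of Lemma~\ref{lem: mono of N} is a generic constant, which may differ from the one in \eqref{eqn: derivative of N} after absorbing the constants introduced in the two linearization estimates above.
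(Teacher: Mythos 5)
Your proof is correct and is precisely the Gr\"onwall-type integration that the paper leaves implicit (the paper simply states that \eqref{eqn: derivative of N} "gives" the lemma). The integrating-factor computation, the elementary bound $e^{-t}-1\ge -t$, and the relabeling of $\kappa$ to absorb the factor of $1/2$ are all sound.
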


Since $N(x,500R)$ is continuous with respect to $x$, lemma \ref{lem: mono of N} implies that $N(x,r)$ is bounded for all $x\in B_{x_0}(500R)$, $r\le 500R$. Let $\Lambda$ be an upper bound for $N$. From now on $\Lambda$ will be treated as a constant. For the rest of this article, unless otherwise stated, $C$, $C_1$, $C_2$, $\cdots$ will denote positive constants that depend on $\Lambda$, $R$, and the norms of the curvatures of $X$ and $\mathcal{V}$, but independent of $U$. The values of $C$, $C_1$, $C_2$, $\cdots$ may be different in different appearances.

If $|g|\le C\cdot f$
for some constant $C$, we write $g=O(f)$.

Inequality \eqref{i: bound diviation of H by N} then implies that there exists a constant $C$ such that
\begin{equation} \label{i: bounded oscillation of H}
\Big| \frac{\partial}{\partial r}\big(\ln (\frac{H}{r^3})\big)\Big|= O(\frac{1}{r}).
\end{equation}
Inequality \eqref{eqn: derivative of N} implies that there exists $C>0$, such that whenever $r\ge s$,
$$
N(x,r) \ge N(x,s)-C(r^2-s^2).
$$

\section{Smoothed frequency functions}
We need to use a modified version of frequency functions.
Let $\phi$ be a non-increasing smooth function on $\mathbb{R}$ such that $\phi(t)=1$ when $t\le 3/4$, and $\phi(t)=0$ when $t\ge 1$. From now on $\phi$ will be fixed, hence the values of $\phi$ and its derivatives are considered as universal constants.
Following \cite{rec}, we define the smoothed frequency functions as follows.
\begin{Definition}
For $x\in X$, let $\nu_x$ be the gradient vector field of the distance function $d(x,\cdot)$. For $x\in B_{x_0}(500R)$, $r\le 500R$, introduce the following functions
\begin{align*}
D_{\phi}(x,r) &= \int |\nabla u(y)|^2 \phi\Big(\frac{d(x,y)}{r}\Big)\,dy,\\
H_{\phi}(x,r) &= -\int |u(y)|^2 d(x,y)^{-1} \phi'\Big(\frac{d(x,y)}{r}\Big) \, dy, \\
N_{\phi}(x,r) &= \frac{rD_{\phi}(x,r)}{H_{\phi}(x,r)},\\
E_{\phi}(x,r) &= -\int |\nabla_{\nu_x}u(y)|^2d(x,y)\phi'\Big(\frac{d(x,y)}{r}\Big) \, dy.
\end{align*}
\end{Definition}

Inequality \eqref{i: bounded oscillation of H} has the following useful corollary.
\begin{Lemma}
There exists a constant $C$ with the following property.
Let $r\in(0,32R]$. Assume $s_1\le 10r$, $s_2 \ge r/10$. Then for any two points $x$, $y$ with $d(x,y)\le r$, one has
$$
H_\phi(x,s_1)\le C(H_\phi(y,s_2)).
$$
\end{Lemma}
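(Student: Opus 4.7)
The overall plan is to show that $H_\phi$ is uniformly comparable to the classical spherical height $H$, and then to compare $H(x, s_1)$ and $H(y, s_2)$ by passing through intermediate radii of size comparable to $r$ and bridging between the base points through a ball integral.

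First, by the coarea formula,
$$
H_\phi(x, r) = -\int_{3r/4}^{r} t^{-1} \phi'(t/r) H(x, t) \, dt,
$$
and inequality \eqref{i: bounded oscillation of H} shows that on $[3r/4, r]$ the ratio $H(x, t)/t^3$ differs from $H(x, r)/r^3$ by at most a universal factor. After the substitution $\tau = t/r$ the remaining integral reduces to a positive constant depending only on $\phi$, so $H_\phi(x, r) \asymp H(x, r)$, with constants independent of $x$ and $r$ in the admissible range $r \le 500R$.

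Second, I would record two one-sided growth bounds on $H(x,\cdot)$: the lower bound $H(x, b) \ge c\, H(x, a)$ for $a \le b \le 500R$ from Lemma \ref{lem: mono of H}, and the polynomial upper bound $H(x, b) \le C(b/a)^{\Lambda+3}\, H(x, a)$ obtained by integrating $\partial_r \ln(H/r^3) \le N/r + \kappa r \le \Lambda/r + \kappa r$ from \eqref{i: bound diviation of H by N}. Since $s_1 \le 10r$ and $11r \le 352R \le 500R$, these yield $H(x, s_1) \le C\, H(x, 11r)$ and $H(y, 12r) \le C\, H(y, s_2)$ (the latter using the polynomial bound when $s_2 \in [r/10, 12r)$, since then $12r/s_2 \le 120$). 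To move between base points at scale $r$, I would introduce the ball integral $\tilde H(x, \rho) := \int_{B_x(\rho)} |u|^2 \, dy$, which satisfies $\tilde H(x, \rho) \asymp \rho\, H(x, \rho)$ by coarea and the two one-sided bounds; since $d(x, y) \le r$ forces $B_x(11r) \subset B_y(12r)$, this gives
$$
H(x, 11r) \asymp \frac{\tilde H(x, 11r)}{11r} \le \frac{\tilde H(y, 12r)}{11r} \asymp H(y, 12r).
$$
Chaining these estimates with the comparability $H_\phi \asymp H$ at both $x$ and $y$ proves the lemma.

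The main obstacle is the base-point switch, since $H(x, \cdot)$ and $H(y, \cdot)$ are integrals over distinct spheres and are not directly comparable sphere-by-sphere. The device of passing through the ball integral $\tilde H$, which is trivially monotone under set inclusion, sidesteps this issue; the rest of the argument is bookkeeping with the frequency-monotonicity inequalities recalled in the preceding section.
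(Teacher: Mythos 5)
Your argument is correct. Each step checks out: the coarea identity $H_\phi(x,r)=-\int_{3r/4}^{r}t^{-1}\phi'(t/r)H(x,t)\,dt$ together with \eqref{i: bounded oscillation of H} does give $H_\phi\asymp H$ uniformly; Lemma \ref{lem: mono of H} and the integrated form of \eqref{i: bound diviation of H by N} (using $N\le\Lambda$) give the two one-sided growth bounds, which handle $s_1\le 10r\le 11r$ and both cases $s_2\ge 12r$ and $s_2\in[r/10,12r)$; and the two-sided comparison $\tilde H(x,\rho)\asymp\rho H(x,\rho)$ plus the inclusion $B_x(11r)\subset B_y(12r)$ legitimately performs the base-point switch. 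The overall skeleton (reduce $H_\phi$ to $H$, then bridge the two centers through a ball integral at scale $r$) matches the paper's, but the mechanism for the key step differs: the paper bounds $H(x,s_1)$ by $\sup_{\partial B_x(s_1)}|u|^2$ and invokes the $C^0$ mean-value estimate of Lemma \ref{lem: C0 estimate} (after rescaling) to dominate that sup by $r^{-4}\int_{B_y(12r)}|u|^2$, whereas you never use the elliptic sup estimate and instead get $H(x,s_1)\lesssim r^{-1}\tilde H(x,11r)$ purely from the frequency-monotonicity inequalities. Your route is more self-contained in that it stays inside the monotonicity framework, at the modest cost of needing the upper growth bound $H(x,b)\le C(b/a)^{\Lambda+3}H(x,a)$ (hence the frequency bound $\Lambda$) at a point where the paper's route does not; since the paper's constants are permitted to depend on $\Lambda$ anyway, nothing is lost.
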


\begin{proof}
Since the constant $K$ in lemma 3.1 only depends on the norms of the
curvatures and the sets $A$, $B$, a rescaling argument gives
$$
|u(z)|^2 \le \frac{C_1}{r^4}\int_{B_{z}(r)} |u|^2,\quad \forall B_z(r)\subset B_{x_0}(500R).
$$
Therefore for every $z\in \partial B_x(s_1)$, 
$$
|u(z)|^2 \le \frac{C_2}{r^4} \int_{B_y(12r)} |u|^2.
$$
On the other hand, inequality \eqref{i: bounded oscillation of H} and lemma \ref{lem: mono of H} 
 gives
$$
 \frac{1}{r^4} \int_{B_y(12r)} |u|^2 \le \frac{C_3}{r^3} H(y,s_2).
$$
Therefore
$$
H(x,s_1)=O(H(y,s_2)).
$$
Apply \eqref{i: bounded oscillation of H} again, one obtains  
\begin{align*}
H(y,s_2) &= O(H_\phi(y,s_2)), \\
H_\phi(x,s_1) &= O(H(x,s_1)), 
\end{align*}
hence the lemma is proved.
\end{proof}

\begin{Lemma} \label{lem: bound |u||nabla u|}
For $x\in B_{x_0}(32R)$, $r\le 32R$, one has
$$
\int_{B_x(r)} |u(y)|^2 dy = O(rH_\phi(x,r)),
$$
$$
\int_{B_x(r)} |u(y)||\nabla u(y)| dy = O(H_\phi(x,r)),
$$
$$
\int_{B_x(r)} |\nabla u(y)|^2 dy = O(\frac{1}{r}H_\phi(x,r)).
$$
\end{Lemma}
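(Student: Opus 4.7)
The plan is to reduce all three bounds to the ordinary height function $H(x,r)$ on spheres and the bounded frequency hypothesis $N(x,r)\le \Lambda$, then to show $H(x,r)=O(H_\phi(x,r))$ so that the statements can be rephrased in terms of the smoothed quantity.

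First I would show that $\int_{B_x(r)}|u|^2\,dy=O(rH(x,r))$. Using the coarea formula together with the estimate \eqref{i: close to euclidean} (which ensures the Jacobian of the exponential map is uniformly bounded), one has
$$
\int_{B_x(r)}|u(y)|^2\,dy \le C_1\int_0^r H(x,s)\,ds.
$$
Lemma \ref{lem: mono of H} applied in the form $H(x,s)\le (s/r)^3 e^{\kappa(r^2-s^2)}H(x,r)$ for $s\le r\le 32R$ then yields $\int_0^r H(x,s)\,ds\le C_2 rH(x,r)$. Similarly, the bound $N(x,r)\le \Lambda$ immediately gives
$$
\int_{B_x(r)}|\nabla u|^2\,dy = D(x,r)=\frac{N(x,r)}{r}H(x,r)\le \frac{\Lambda}{r}H(x,r),
$$
which takes care of the third estimate once $H$ is replaced by $H_\phi$. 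The second estimate then follows from Cauchy--Schwarz applied to the first and third.

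The remaining step, and the only one with any substance, is to show $H(x,r)=O(H_\phi(x,r))$. The definition of $H_\phi$ expressed in polar/normal coordinates about $x$ and the compactness of $\supp(\phi')\subset[3/4,1]$ give
$$
H_\phi(x,r)\ge c_1\int_{ar}^{br}\frac{H(x,s)}{s}\,ds
$$
for some fixed $0<a<b<1$ and $c_1>0$, using again the bounded Jacobian from \eqref{i: close to euclidean}. Now apply Lemma \ref{lem: mono of H} the other way: for $s\in[ar,br]$ we have $H(x,r)\le (r/s)^3 e^{\kappa r^2}H(x,s)\le C_3 H(x,s)$. Hence
$$
H_\phi(x,r)\ge c_2\frac{H(x,r)}{r}\cdot (b-a)r = c_3 H(x,r),
$$
as desired.

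The main obstacle is really only bookkeeping: one must be careful that the constants from the coarea Jacobian correction and from Lemma \ref{lem: mono of H} are uniform in $x\in B_{x_0}(32R)$ and $r\le 32R$, but this is exactly what the bounds \eqref{i: close to euclidean} and the global choice of $R$ provide. No additional analytic input beyond the monotonicity of $H$, the bound on $N$, and the elementary nonnegativity $-\phi'\ge 0$ is needed.
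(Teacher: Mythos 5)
Your argument follows essentially the same route as the paper: reduce everything to $H$, $D$, and the frequency bound $N\le\Lambda$, compare $H(x,r)$ with $H_\phi(x,r)$, and get the middle estimate by Cauchy--Schwarz. The paper phrases the third bound slightly differently (it uses $\int_{B_x(r)}|\nabla u|^2\le D_\phi(x,2r)=\frac{1}{2r}N_\phi(x,2r)H_\phi(x,2r)$ and the comparison lemma for $H_\phi$ at nearby radii), but this is cosmetic; both versions rest on the same inputs.

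There is one step you justify incorrectly. To get $H(x,r)\le C_3H(x,s)$ for $s\in[ar,br]$ you invoke ``Lemma \ref{lem: mono of H} the other way,'' but that lemma is genuinely one-sided: it gives $H(x,r)\ge (r/s)^3e^{-\kappa(r^2-s^2)}H(x,s)$, i.e.\ an \emph{upper} bound on $H(x,s)$ in terms of $H(x,r)$, which is the opposite of what you need here. The reverse comparison is not a consequence of monotonicity alone; it requires the upper bound on the frequency. Concretely, integrating the upper half of \eqref{i: bound diviation of H by N}, $\partial_r\ln(H/r^3)\le N/r+\kappa r\le \Lambda/r+\kappa r$, yields $H(x,r)\le (r/s)^{3+\Lambda}e^{\kappa(r^2-s^2)/2}H(x,s)$, which for $s\in[ar,br]$ gives the constant you want (depending on $\Lambda$, which is allowed). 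This is exactly the content of \eqref{i: bounded oscillation of H}, which is what the paper cites for this lemma; with that substitution your proof is complete. Note also that your stated exponent $3$ in this reverse inequality should be $3+\Lambda$, though this does not affect the conclusion.
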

\begin{proof}
The first equation follows from inequality \eqref{i: bounded oscillation of H} and lemma \ref{lem: mono of H}. 
For the third,
\begin{align*}
\int_{B_x(r)} |\nabla u(y)|^2 dy &\le D_\phi(x,2r) \\ 
&= \frac{1}{2r} N_\phi(x,2r) H_\phi(x,2r) \\
&= O(\frac{1}{r}H_\phi(x,r)).
\end{align*}
The second equation then follows from Cauchy's inequality.
\end{proof}

The main result of this section is the following proposition.
\begin{Proposition} \label{prop: int by parts}
The functions $D_\phi$, $H_\phi$, $N_\phi$, and $E_\phi$ are smooth in both variables. Assume $x\in B_{x_0}(32R)$, $r\le 32R$, and $v\in T_x(X)$. Consider the normal coordinate centered at $x$ with radius $r$, extend the vector $v$ to a vector field on $B_x(r)$ by requiring that the coordinate functions of $v$ are constants. Then the following equations hold
\begin{align}
D_\phi(x,r) &= -\frac{1}{r}\int \phi'\Big(\frac{d(x,y)}{r}\Big) \nabla_{\nu_x}u(y)\cdot u(y)\,dy  + O(rH_\phi(x,r)),\label{eqn: int by parts 1}\\
\partial_r D_\phi(x,r) &= \frac{2}{r} D_\phi(x,r) + \frac{2}{r^2} E_\phi(x,r) +O(H_\phi(x,r)), \label{eqn: int by parts 2}\\
\partial_v D_\phi(x,r) &=   -\frac{2}{r}\int \phi'\Big(\frac{d(x,y)}{r}\Big) \nabla_{\nu_x}u(y)\cdot \nabla_v u(y)\,dy + O(H_\phi(x,r)), \label{eqn: int by parts 3}\\
\partial_r H_\phi(x,r) &= \frac{3}{r}H_\phi(x,r) + 2 D_\phi(x,r) + O(rH_\phi(x,r)),
\label{eqn: int by parts 4} \\
\partial_v H_\phi(x,r) &= -2\int u(y)\cdot\nabla_v u(y) \, d(x,y)^{-1} \phi'\Big(\frac{d(x,y)}{r}\Big) \, dy + O(rH_\phi(x,r)). \label{eqn: int by parts 5}
\end{align}
\end{Proposition}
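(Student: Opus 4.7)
The plan is to work throughout in the normal coordinate chart at $x$, in which $g = \delta + O(d^2)$, the Christoffel symbols are $O(d)$, the Jacobian of $\exp_x$ is $1 + O(d^2)$, and a geodesic sphere of radius $s$ has mean curvature $3/s + O(s)$. Since $r \le 32R$ is far below the injectivity radius, the distance function $d(x,\cdot)$ is smooth on $B_x(r)$, and because $\phi'(d/r)$ vanishes near $d = 0$ the singularity of $\nu_x$ at $y = x$ is harmless, so smoothness of $D_\phi, H_\phi, N_\phi, E_\phi$ in $(x,r)$ follows by differentiating under the integral. Each of the five identities is a Pohozaev- or integration-by-parts statement whose Euclidean prototype rests on the Weitzenb{\"o}ck identity $\nabla^*\nabla u = -\mathcal{R}u$ (a consequence of $Du = 0$); the geometric corrections are absorbed into $O(rH_\phi)$ or $O(H_\phi)$ via Lemma \ref{lem: bound |u||nabla u|}.

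For \eqref{eqn: int by parts 1}, I would write $\phi|\nabla u|^2 = \langle\nabla u,\nabla(\phi u)\rangle - \langle\nabla_{\nabla\phi}u,u\rangle$; integrating, the first term becomes $\int\phi\langle u,\nabla^*\nabla u\rangle = -\int\phi\langle u,\mathcal{R}u\rangle = O(rH_\phi)$, and $\nabla\phi(d/r) = r^{-1}\phi'(d/r)\nu_x$ produces the main term. Equation \eqref{eqn: int by parts 4} then follows by expressing $H_\phi$ in polar coordinates as $-\int_0^\infty s^{-1}\phi'(s/r)H(x,s)\,ds$, differentiating in $r$ and integrating by parts in $s$ to put the derivative on $H$, then substituting $\partial_s H(x,s) = (3/s)H + 2\int_{\partial B_x(s)} u\cdot\nabla_{\nu_x}u\,dS + O(sH)$ (mean-curvature comparison for geodesic spheres). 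The middle integral reassembles into $2D_\phi$ via \eqref{eqn: int by parts 1}, and the curvature correction is $O(rH_\phi)$ by Lemma \ref{lem: bound |u||nabla u|}.

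Equation \eqref{eqn: int by parts 2} is a Rellich/Pohozaev identity: $\partial_r D_\phi = -r^{-2}\int d\,\phi'(d/r)|\nabla u|^2\,dy$, and contracting $\nabla^*\nabla u = -\mathcal{R}u$ with the radial vector field $s\nu_x$ yields the spherical Rellich identity $\int_{\partial B_x(s)}|\nabla u|^2 = 2\int_{\partial B_x(s)}|\nabla_{\nu_x}u|^2 + (2/s)D(x,s) + O(sH)$; substituting and integrating by parts in $s$ against $\phi'(s/r)$ produces the two main terms $(2/r^2)E_\phi + (2/r)D_\phi$ with $O(H_\phi)$ remainder. For \eqref{eqn: int by parts 3} and \eqref{eqn: int by parts 5}, I would extend $v$ to the constant vector field $V$ in normal coordinates as in the statement. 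Since $d(x,y) = |y|$ in these coordinates, a direct calculation gives $\partial_v^x F(d(x,y)/r) = -\partial_V^y F(d(x,y)/r) + O(d^2/r)|F'|$ for any scalar $F$. Moving the derivative from $x$ to $y$ and integrating by parts in $y$ converts $|\nabla u|^2$ into $2\nabla_{\nu_x}u\cdot\nabla_v u$ for \eqref{eqn: int by parts 3} (after commuting $[\nabla_V,\nabla_i]u = O(|u|)$ and invoking $\nabla^*\nabla u = -\mathcal{R}u$ once more to dispose of the second-order derivatives on $u$) and $|u|^2$ into $2u\cdot\nabla_v u$ for \eqref{eqn: int by parts 5}.

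The principal obstacle is bookkeeping. Every identity harvests curvature, Christoffel, and volume-form residues from the $O(d^2)$ metric deviation and the Weitzenb{\"o}ck term, and one must verify that each integrates against the appropriate cut-off to the claimed order. Uniformly, each error is a product of $|u|^2$, $|u||\nabla u|$, or $|\nabla u|^2$ with a bounded factor supported in $B_x(r)$, and Lemma \ref{lem: bound |u||nabla u|} — which provides $\int|u|^2 = O(rH_\phi)$, $\int|u||\nabla u| = O(H_\phi)$, and $\int|\nabla u|^2 = O(H_\phi/r)$ — converts each such contribution to either $O(rH_\phi)$ or $O(H_\phi)$, matching the proposition.
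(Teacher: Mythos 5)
Your overall strategy coincides with the paper's: write $D_\phi$ and $H_\phi$ as coarea integrals of $D(x,s)$ and $H(x,s)$ against $\phi'(s/r)$, feed in the sphere identities coming from the Weitzenb\"ock formula for \eqref{eqn: int by parts 1}, \eqref{eqn: int by parts 2}, \eqref{eqn: int by parts 4}, use the first variation of $d(x,y)$ to trade the $x$-derivative for a $y$-derivative in \eqref{eqn: int by parts 3} and \eqref{eqn: int by parts 5}, and absorb every curvature, Christoffel, and volume-form residue via Lemma \ref{lem: bound |u||nabla u|}. That part is all correct.

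The one genuine gap is your repeated use of $\nabla^*\nabla u=-\mathcal{R}u$ together with integration by parts as if $u$ were a globally defined smooth solution. The section $U$ is two-valued, only $W^{1,2}$, and smooth only on $X-Z$; second derivatives of $u$ need not be locally integrable near $Z$, so the manipulations behind \eqref{eqn: int by parts 1} and especially \eqref{eqn: int by parts 3} (where $\nabla_{e_i}\nabla_v u$ appears in intermediate steps before the Weitzenb\"ock identity removes it) are not justified as written. The paper resolves this with Lemma \ref{lem: smooth approx}: one performs the entire computation on smooth approximants $w=\tau_s u$, for which $Dw\neq 0$, so one cannot simply replace $\nabla^\dagger\nabla w$ by $-\mathcal{R}_0 w$; instead the extra terms $\int\langle\rho(\nabla\varphi)\nabla_v w,Dw\rangle$, $\int\varphi\langle[\nabla_v,D]w,Dw\rangle$, $\int\partial_v\varphi|Dw|^2$, etc., are carried along explicitly, and only after the identity has been rewritten purely in terms of first derivatives does one pass to the $W^{1,2}$ limit, where $Du=0$ kills them. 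Your sketch needs this approximation step (or an equivalent capacity argument using that $Z$ has codimension at least $2$) to be complete; everything else matches the paper's proof.
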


The smoothness of the functions follows from the fact that $\phi$ is smooth and $|u|$, $|\nabla u|$ are both in $L^2$.

\begin{proof}[Proof of \eqref{eqn: int by parts 1}]
It was proved in \cite[Section 2(c)]{taubesZ2} that
\begin{equation} \label{eqn: der of H in r}
\int_{\partial B_x(s)} \nabla_{\nu_x}u(y)\cdot u(y)\, dy = \int_{B_x(s)}|\nabla u(y)|^2\,dy + \int_{B_x(s)}\langle u(y),\mathcal{R} u(y) \rangle \,dy,
\end{equation}
where $\mathcal{R}$ is a bounded curvature term from the Weitzenb\"ock formula.

Therefore, by lemma \ref{lem: bound |u||nabla u|},
\begin{align*}
D_\phi(x,r) &= -\frac{1}{r}\int_0^r \phi'\Big(\frac{s}{r}\Big) \int_{B_x(s)}|\nabla u(y)|^2\,dy\,ds\\
&= -\frac{1}{r}\int \phi'\Big(\frac{d(x,y)}{r}\Big) \nabla_{\nu_x}u(y)\cdot u(y)\,dy +\frac{1}{r}\int_0^r \phi'\Big(\frac{s}{r}\Big)\int_{B_x(s)} \langle u,\mathcal{R} u \rangle \,dy \,ds\\
&= -\frac{1}{r}\int \phi'\Big(\frac{d(x,y)}{r}\Big) \nabla_{\nu_x}u(y)\cdot u(y)\,dy + O(rH_\phi(x,r)).
\end{align*}
\end{proof}

\begin{proof}[Proof of \eqref{eqn: int by parts 2}]
\begin{align}
\partial_r D_\phi(x,r) &= -\frac{1}{r^2}\int |\nabla u(y)|^2 \phi'\Big(\frac{d(x,y)}{r}\Big)\cdot d(x,y) \, dy \nonumber
\\
&= -\frac{1}{r^2} \int_0^r \phi'\Big(\frac{s}{r}\Big)\cdot s \int_{\partial B_x(s)} |\nabla u(y)|^2 \, dy\,ds \label{eqn: int by parts 2 step 1}
\end{align}
It was proved in \cite[Section 2(d)]{taubesZ2} that
\begin{multline*}
\int_{\partial B_x(s)} |\nabla u(y)|^2 \,dy= 
2 \int_{\partial B_x(s)} |\nabla_{\nu_x} u(y)|^2 \,dy
+ \frac{2}{s} \int_{B_x(s)} |\nabla u(y)|^2\,dy
\\
+ \frac{2}{s} \int_{B_x(s)} \langle u(y), \mathcal{R} u(y) \rangle \,dy 
-
\int_{\partial B_x(s)} \langle \mathcal{R}_1 u(y),\nabla u(y)\rangle\, dy
+\int_{\partial B_x(s)} \langle u(y),\mathcal{R}_2 u(y) \rangle\,dy,
\end{multline*}
where $\mathcal{R}$, $\mathcal{R}_1$, $\mathcal{R}_2$ are smooth tensors, $\mathcal{R}$ and $\mathcal{R}_2$ are bounded, the norm of $\mathcal{R}_1$ is bounded by $C_1\cdot r$.

Notice that
$$
-\int_0^r \phi'\Big(\frac{s}{r}\Big)\cdot s \int_{\partial B_x(s)} |\nabla_{\nu_x} u(y)|^2 \,dy\,ds = E_\phi(x,r),
$$
$$
-\frac{1}{r}\int_0^r \phi'\Big(\frac{s}{r}\Big) \int_{B_x(s)} |\nabla u(y)|^2\,dy\,ds = D_\phi(x,r).
$$
Plug into equation \eqref{eqn: int by parts 2 step 1}, we have
\begin{multline*}
\partial_r D_\phi (x,r) = \frac{2}{r} D_\phi(x,r)+\frac{2}{r^2} E_\phi(x,r)
-\frac{1}{r^2} \int_0^r \phi'\Big(\frac{s}{r}\Big)\cdot s \cdot\Big[\frac{2}{s} \int_{B_x(s)} \langle u(y), \mathcal{R} u(y) \rangle \,dy 
\\
-\int_{\partial B_x(s)} \langle \mathcal{R}_1 u(y),\nabla u(y)\rangle \, dy
+\int_{\partial B_x(s)} \langle u(y),\mathcal{R}_2 u(y) \rangle\,dy\Big] \,ds.
\end{multline*}
Lemma \ref{lem: bound |u||nabla u|} implies
\begin{samepage}
\begin{multline*}
-\frac{1}{r^2} \int_0^r \phi'\Big(\frac{s}{r}\Big)\cdot s \cdot\Big[\frac{2}{s} \int_{B_x(s)} \langle u(y), \mathcal{R} u(y) \rangle \,dy 
+\int_{\partial B_x(s)} \langle u(y),\mathcal{R}_2 u(y) \rangle\,dy\Big] \,ds \\ = O(H_\phi(x,r)).
\end{multline*}
\end{samepage}
On the other hand,
\begin{align*}
& \Big|-\frac{1}{r^2} \int_0^r \phi'\Big(\frac{s}{r}\Big)\cdot s \cdot\Big[-\int_{\partial B_x(s)} \langle\mathcal{R}_1 u(y),\nabla u(y)\rangle \, dy\Big]\,ds \Big| 
\\
\le & C_2\cdot \int_0^r  \Big|\phi'\Big(\frac{s}{r}\Big) \Big|\int_{\partial B_x(s)} |u(y)||\nabla u(y)| \, dy\,ds 
\\
\le &
C_3\int_{B_x(r)} |u(y)||\nabla u(y)| dy 
=O(H_\phi(x,r)).
\end{align*}
Hence the result is proved.
\end{proof}

\begin{proof}[Proof of \eqref{eqn: int by parts 3}]
For a function $G(x,y)$ defined on $X\times X$ and a vector field $w$, use 
$\frac{\partial x}{\partial w}G$ to denote the directional derivative of $G$ with respect to $x$, use $\frac{\partial y}{\partial w}G$ to denote the directional derivative with respect to $y$.

The first variation formula of geodesic lengths gives
$$
\frac{\partial x}{\partial v}d(x,y) + \frac{\partial y}{\partial v}d(x,y) = O(d(x,y)^2).
$$
We have
\begin{align}
\frac{\partial x}{\partial v} D_\phi(x,r) &= \frac{1}{r} \int |\nabla u(y)|^2 \phi'\Big(\frac{d(x,y)}{r}\Big) \cdot \frac{\partial x}{\partial v}d(x,y)\,dy \nonumber \\
&= -\frac{1}{r} \int |\nabla u(y)|^2 \phi'\Big(\frac{d(x,y)}{r}\Big) \cdot \frac{\partial y}{\partial v}d(x,y)\,dy + O(r)\int_{B_x(r)} |\nabla u(y)|^2\nonumber\\
&= - \int |\nabla u(y)|^2  \cdot \frac{\partial y}{\partial v}\phi\Big(\frac{d(x,y)}{r}\Big)\,dy + O(H_\phi(x,r)). \label{eqn: int by parts 3 step 1}
\end{align}
One needs to establish the following lemma.

\begin{Lemma} \label{lem: int by parts 3 step 1}
Let $F$ be the curvature of $\mathcal{V}$, and $\{e_i\}$ be an orthonormal basis of $TX$. Let $\varphi$ be a smooth function with $\supp\,\varphi\subset B_x(r)$. Then
\begin{align*}
\quad &\int |\nabla u|^2 \partial_v \varphi \\
&= 
 2 \int\langle d\varphi\otimes\nabla_v u , \nabla u \rangle -2\int \sum_i \varphi \langle F(v,e_i) u, \nabla_{e_i}u \rangle -2\int \sum_i \varphi \langle \nabla_{[v,e_i]} u, \nabla_{e_i} u\rangle
\\
&\quad
 - \int |\nabla u|^2\varphi \,\text{div}(v)  +2\int\sum_i \varphi \langle \nabla_v u,\nabla_{\nabla_{e_i}e_i}u\rangle \\ 
&\quad
+2
\int \sum_i \varphi \langle \nabla_v u,\nabla_{e_i} u\rangle\,\text{div} (e_i)
+2\int  \varphi \langle \nabla_v u, \mathcal{R}_0u \rangle,
\end{align*}
where $\mathcal{R}_0$ is the curvature term in the Weitzenb\"ock formula.
\end{Lemma}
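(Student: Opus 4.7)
The plan is to derive the identity by two successive integrations by parts, linking them with the curvature identity for commuting covariant derivatives and with the Weitzenb\"ock formula. All computations are carried out pointwise on $X - Z$, where $u$ is smooth and the bilinear expressions in $u$ and $\nabla u$ are globally well-defined (they do not depend on the local sign choice of $u$).

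First, I would apply the divergence theorem to the vector field $|\nabla u|^2 \varphi\, v$. Since $\varphi$ has compact support in $B_x(r)$, this yields
$$
\int |\nabla u|^2 \partial_v \varphi = -\int \varphi \,\partial_v(|\nabla u|^2)\,dy - \int |\nabla u|^2 \varphi \,\text{div}(v)\,dy,
$$
accounting for the $-\int |\nabla u|^2 \varphi\,\text{div}(v)$ term in the target identity. Then, using compatibility of $\nabla$ with the metric on $\mathcal{V}$, I compute $\partial_v(|\nabla u|^2) = 2\sum_i \langle \nabla_v \nabla_{e_i} u, \nabla_{e_i} u\rangle$, and apply the standard curvature commutation relation
$$
\nabla_v \nabla_{e_i} u = \nabla_{e_i} \nabla_v u + \nabla_{[v,e_i]} u + F(v,e_i)\,u,
$$
which immediately produces the terms involving $F(v,e_i)u$ and $\nabla_{[v,e_i]}u$ on the right-hand side.

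Second, I would treat the remaining piece $-2\sum_i \int \varphi \langle \nabla_{e_i}\nabla_v u, \nabla_{e_i} u\rangle$ via the pointwise identity
$$
\langle \nabla_{e_i}\nabla_v u, \nabla_{e_i} u\rangle = e_i\langle \nabla_v u, \nabla_{e_i} u\rangle - \langle \nabla_v u, \nabla_{e_i}\nabla_{e_i} u\rangle.
$$
Integration by parts on the divergence term $e_i\langle\cdot\rangle$ contributes the $2\int \langle d\varphi\otimes \nabla_v u,\nabla u\rangle$ term and the $2\int \varphi \sum_i \langle \nabla_v u,\nabla_{e_i}u\rangle\,\text{div}(e_i)$ term. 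For the remaining piece $\sum_i \nabla_{e_i}\nabla_{e_i}u$, I use $\nabla^*\nabla u = -\sum_i \nabla_{e_i}\nabla_{e_i} u + \sum_i \nabla_{\nabla_{e_i}e_i} u$ together with the Weitzenb\"ock formula $D^2 u = \nabla^*\nabla u + \mathcal{R}_0 u$; since $D(u)=0$, this gives $\sum_i \nabla_{e_i}\nabla_{e_i}u = \sum_i \nabla_{\nabla_{e_i}e_i}u + \mathcal{R}_0 u$, producing exactly the last two displayed terms in the statement.

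The main obstacle is justifying the integrations by parts even though $\varphi$ may intersect the zero locus $Z$, where $u$ is only $W^{1,2}$. To handle this I would multiply $\varphi$ by the cutoff $\tau_s$ introduced in the proof of Lemma \ref{lem: Sobolev} (or an analogous cutoff based on distance to $Z$), carry out all the above manipulations on the smooth open set where $\tau_s = 1$, and then let $s \to 0$. The extra terms generated by differentiating $\tau_s$ must be shown to vanish in the limit; these are controlled by Cauchy--Schwarz together with the bound $|\nabla \tau_s|\,|u| \le C|\nabla u|/|\ln s|$, the finiteness of $\int_{X-Z}|\nabla u|^2$ from the definition of a $\mathbb{Z}/2$ harmonic spinor, and Lemma \ref{lem: bound |u||nabla u|}. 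Once these limits are verified, every term in the identity appears with the correct coefficient and sign, and the proof is complete.
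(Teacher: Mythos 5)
Your algebraic skeleton is exactly the paper's: the same first divergence-theorem step producing $-\int|\nabla u|^2\varphi\,\text{div}(v)$, the same curvature commutation producing the $F(v,e_i)$ and $[v,e_i]$ terms, the same second integration by parts producing the $d\varphi$ and $\text{div}(e_i)$ terms, and the Weitzenb\"ock formula for the rough Laplacian. The signs and coefficients all check out. The difference — and the problem — lies entirely in how you justify the integrations by parts across $Z$.

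You propose to replace $\varphi$ by $\varphi\tau_s$ and let $s\to 0$. But the error terms this generates are the ones where a derivative falls on $\tau_s$ while paired with expressions \emph{quadratic in $\nabla u$}: namely $\int|\nabla u|^2\varphi\,\partial_v\tau_s$ from the first integration by parts and $2\int\sum_i\varphi\,(e_i\tau_s)\langle\nabla_v u,\nabla_{e_i}u\rangle$ from the second. The bound you cite, $|\nabla\tau_s|\,|u|\le C|\nabla u|/|\ln s|$, controls $|\nabla\tau_s|$ only when it multiplies $|u|$; here it multiplies $|\nabla u|^2$, and the resulting quantity is of size $\frac{C}{|\ln s|}\int_{\{s^2\le|u|\le s\}}|\nabla u|^3/|u|$, which is not controlled by $\int|\nabla u|^2<\infty$ or by Lemma \ref{lem: bound |u||nabla u|}. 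There is no estimate available at this stage of the paper (where only $\dim_{\mathcal H}Z\le 2$ is known, and $\nabla u$ may blow up near $Z$) that makes these terms vanish. The paper avoids this by approximating the \emph{section} rather than the test function: it applies Lemma \ref{lem: smooth approx} to get smooth $U_i\to U$ strongly in $W^{1,2}$, carries out all integrations by parts classically for the smooth $w$, and only then passes to the limit — which is legitimate because the final identity is quadratic in $(w,\nabla w)$ with bounded coefficients. The price is that $Dw\neq 0$ for the approximants, so one cannot invoke $D^2w=0$ as you do; the paper must carry the $Dw$ terms (such as $\int\langle\rho(\nabla\varphi)\nabla_v w,Dw\rangle$ and $\int\partial_v\varphi\,|Dw|^2$) through the computation and observe that they vanish in the limit because $DU_i\to DU=0$ in $L^2$. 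Your use of $Du=0$ pointwise and the cutoff of $\varphi$ are two halves of the same unresolved issue: working with $u$ itself on $X-Z$ makes the Dirac equation available but leaves the boundary terms near $Z$ uncontrolled.
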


\begin{proof}[Proof of lemma \ref{lem: int by parts 3 step 1}]
By lemma \ref{lem: smooth approx}, there exists a sequence of smooth 2-valued section $U_i$, such that $U_i=-U_i$ and $U_i\to U$ in $W^{1,2}$. By partitions of unity, integration by parts works for $U_i$. For any $U_i$, locally write it as $[\![w]\!]+[\![-w]\!]$ where $w$ is a smooth section of $\mathcal{V}$, then
\begin{align*}
&\quad \int |\nabla w|^2 {\partial_v}\varphi\\
&=  -\int \sum_i \varphi \nabla_v\langle \nabla_{e_i} w, \nabla_{e_i} w  \rangle - \int |\nabla w|^2\varphi \,\text{div}(v)\\
&= -2 \int \sum_i \varphi \langle \nabla_{e_i}\nabla_v w, \nabla_{e_i} w  \rangle  -2\int \sum_i \varphi \langle F(v,e_i) w, \nabla_{e_i}w \rangle \\
&\quad -2\int \sum_i \varphi \langle \nabla_{[v,e_i]} w, \nabla_{e_i} w\rangle - \int |\nabla w|^2\varphi \,\text{div}(v)
\end{align*}
Here $F$ denotes the curvature of $\mathcal{V}$. For the first term in the formula above,
\begin{align*}
&\quad \int \sum_i \varphi \langle \nabla_{e_i}\nabla_v w, \nabla_{e_i} w  \rangle 
\\
&= - \int\sum_i (\nabla_{e_i}\varphi) \langle \nabla_v w, \nabla_{e_i} w  \rangle -\int  \sum_i \varphi \langle \nabla_v w, \nabla_{e_i}\nabla_{e_i} w  \rangle 
\\
& \quad  -
\int \sum_i \varphi \langle \nabla_v w,\nabla_{e_i} w\rangle\,\text{div} (e_i)
\\
&= - \int\sum_i (\nabla_{e_i}\varphi) \langle \nabla_v w, \nabla_{e_i} w  \rangle +\int  \sum_i \varphi \langle \nabla_v w, \nabla^\dagger\nabla w  \rangle 
\\
& \quad  -\int\sum_i \varphi \langle \nabla_v w,\nabla_{\nabla_{e_i}e_i}w\rangle-
\int \sum_i \varphi \langle \nabla_v w,\nabla_{e_i} w\rangle\,\text{div} (e_i)
\end{align*}
For the second term in the formula above,  let $\mathcal{R}_0$ be the curvature term in the Weitzenb\"ock formula, then
\begin{align*}
&\quad \int  \sum_i \varphi \langle \nabla_v w, \nabla^\dagger\nabla w  \rangle = \int   \langle \varphi \nabla_v w, D^2 w-\mathcal{R}_0w \rangle 
\\
&=   -\int  \varphi \langle \nabla_v w, \mathcal{R}_0w \rangle 
+  \int \langle \rho(\nabla\varphi)\nabla_v w , Dw \rangle
- \int \langle \varphi \langle [\nabla_v,D]w, Dw\rangle  
 +  \int \varphi\langle \nabla_v(Dw), Dw \rangle 
\\
&=  -\int  \varphi \langle \nabla_v w, \mathcal{R}_0w \rangle 
+  \int \langle \rho(\nabla\varphi)\nabla_v w , Dw \rangle 
- \int \langle \varphi \langle [\nabla_v,D]w, Dw\rangle 
\\
&\quad 
 -\frac{1}{2}  \int \partial_v\varphi |Dw|^2 
 -\frac{1}{2} \int \varphi |Dw|^2 \,\text{div}(v)
\end{align*}
Therefore
\begin{align*}
&\quad \int |\nabla w|^2 {\partial_v}\varphi
\\
&= 
 -2\int \sum_i \varphi \langle F(v,e_i) w, \nabla_{e_i}w \rangle -2\int \sum_i \varphi \langle \nabla_{[v,e_i]} w, \nabla_{e_i} w\rangle- \int |\nabla w|^2\varphi \,\text{div}(v)\quad
\\
&\quad
+2 \int\sum_i (\nabla_{e_i}\varphi) \langle \nabla_v w, \nabla_{e_i} w  \rangle   +2\int\sum_i \varphi \langle \nabla_v w,\nabla_{\nabla_{e_i}e_i}w\rangle +2
\int \sum_i \varphi \langle \nabla_v w,\nabla_{e_i} w\rangle\,\text{div} (e_i)
\\
&\quad
+2\int  \varphi \langle \nabla_v w, \mathcal{R}_0w \rangle 
-2  \int \langle \rho(\nabla\varphi)\nabla_v w , Dw \rangle 
+2 \int \langle \varphi \langle [\nabla_v,D]w, Dw\rangle 
\\
&\quad 
 +  \int \partial_v\varphi |Dw|^2 
 - \int \varphi |Dw|^2 \,\text{div}(v)
\end{align*}
Take limit $U_i\to U$, one has
\begin{align*}
&\quad \int |\nabla u|^2 {\partial_v}\varphi 
\\
&= 
 -2\int \sum_i \varphi \langle F(v,e_i) u, \nabla_{e_i}u \rangle -2\int \sum_i \varphi \langle \nabla_{[v,e_i]} u, \nabla_{e_i} u\rangle- \int |\nabla u|^2\varphi \,\text{div}(v)\quad
\\
&\quad
+2 \int\sum_i (\nabla_{e_i}\varphi) \langle \nabla_v u, \nabla_{e_i} u  \rangle   +2\int\sum_i \varphi \langle \nabla_v u,\nabla_{\nabla_{e_i}e_i}u\rangle +2
\int \sum_i \varphi \langle \nabla_v u,\nabla_{e_i} u\rangle\,\text{div} (e_i)
\\
&\quad
+2\int  \varphi \langle \nabla_v u, \mathcal{R}_0u \rangle 
-2  \int \langle \rho(\nabla\varphi)\nabla_v u , Du \rangle 
+2 \int \langle \varphi \langle [\nabla_v,D]u, Du\rangle 
\\
&\quad 
 +  \int \partial_v\varphi |Du|^2 
 - \int \varphi |Du|^2 \,\text{div}(v) 
\\
&= 
 -2\int \sum_i \varphi \langle F(v,e_i) u, \nabla_{e_i}u \rangle -2\int \sum_i \varphi \langle \nabla_{[v,e_i]} u, \nabla_{e_i} u\rangle- \int |\nabla u|^2\varphi \,\text{div}(v)\quad
\\
&\quad
+2 \int\sum_i (\nabla_{e_i}\varphi) \langle \nabla_v u, \nabla_{e_i} u  \rangle   +2\int\sum_i \varphi \langle \nabla_v u,\nabla_{\nabla_{e_i}e_i}u\rangle \\
&\quad
+2
\int \sum_i \varphi \langle \nabla_v u,\nabla_{e_i} u\rangle\,\text{div} (e_i)
+2\int  \varphi \langle \nabla_v u, \mathcal{R}_0u \rangle 
\end{align*}
Notice that 
$$
\sum_i (\nabla_{e_i}\varphi) \langle \nabla_v u, \nabla_{e_i} u  \rangle= \langle d\varphi\otimes\nabla_v u , \nabla u \rangle,
$$
therefore the lemma is proved.
\end{proof}

Back to the proof of equation \eqref{eqn: int by parts 3}. Take $\varphi(y)=\phi({d(x,y)}/{r})$. By Lemma \ref{lem: bound |u||nabla u|},
$$
-2\int \sum_i \varphi \langle F(v,e_i) u, \nabla_{e_i}u \rangle + 2 \int  \varphi \langle \nabla_v u, \mathcal{R}_0u \rangle = O(H_\phi(x,r)).
$$
On the other hand, $|\text{div}(v)|=O(r)$, and one can choose $\{e_i\}$ such that $|[v,e_i]|=O(r)$, $|\text{div}(e_i)|=O(r)$, and $|\nabla_{e_i}e_i|=O(r)$. Thus by lemma \ref{lem: bound |u||nabla u|},
\begin{multline*}\quad
 -2\int \sum_i \varphi \langle \nabla_{[v,e_i]} u, \nabla_{e_i} u\rangle - \int |\nabla u|^2\varphi \,\text{div}(v)  +2\int\sum_i \varphi \langle \nabla_v u,\nabla_{\nabla_{e_i}e_i}u\rangle \\
\quad
+2
\int \sum_i \varphi \langle \nabla_v u,\nabla_{e_i} u\rangle\,\text{div} (e_i) = O(H_\phi(x,r)).
\end{multline*}

Equation \eqref{eqn: int by parts 3} then follows immediately from equation \eqref{eqn: int by parts 3 step 1} and lemma \ref{lem: int by parts 3 step 1}.
\end{proof}

\begin{proof}[Proof of \eqref{eqn: int by parts 4}]
By \cite[Equation (2.11)]{taubesZ2},
\begin{equation}\label{eqn: der of H int by parts}
\partial_s H(x,s)=\frac{3}{s}H(x,s) + 2D(x,s) + \int_{B_x(s)} \langle u, \mathcal{R} u \rangle + \int_{\partial B_x(s)} \mathfrak{t}|u|^2,
\end{equation}
where $\mathcal{R}$ is a curvature term from the Weitzenb\"ock formula, and $\mathfrak{t}$ comes from the mean curvature of $\partial B_x(s)$. The function $\mathfrak{t}$ satisfies $|\mathfrak{t}(y)|=O(d(x,y))$.
Notice that
$$
H_\phi(x,r)
=
\int_0^r -\phi'(s/r)\cdot \frac{1}{s} \cdot H(s) \, ds
=
\int_0^1 -\phi'(\lambda) \frac{1}{\lambda}\cdot H(\lambda r)\,d\lambda.
$$
Therefore
\begin{align*}
&\partial_r H_\phi(x,r) 
\\ =&
\int_0^1 -\phi'(\lambda) \cdot (\partial_r H)(\lambda r)\,d\lambda
\\ =& 
\int_0^1 -\phi'(\lambda) \Big[\frac{3}{\lambda r}H(x,\lambda r) + 2D(x,\lambda r) + \int_{B_x(\lambda r)} \langle u, \mathcal{R} u \rangle + \int_{\partial B_x(\lambda r)} \mathfrak{t}|u|^2 \Big]\,d\lambda
\\ =&
-\frac{1}{r}\int_0^r \phi'(s/r) \Big[\frac{3}{s}H(x,s) + 2D(x,s) + \int_{B_x(s)} \langle u, \mathcal{R} u \rangle + \int_{\partial B_x(s)} \mathfrak{t}|u|^2 \Big]\,ds
\\ 
=&
\frac{3}{r}H_\phi(x,r) + 2 D_\phi(x,r) -\frac{1}{r}\int_0^r \phi'(s/r) \Big[\int_{B_x(s)} \langle u, \mathcal{R} u \rangle + \int_{\partial B_x(s)} \mathfrak{t}|u|^2 \Big]\,ds
\\
=&
\frac{3}{r}H_\phi(x,r) + 2 D_\phi(x,r) + O(rH_\phi(x,r)).
\end{align*}
\end{proof}

\begin{proof}[Proof of \eqref{eqn: int by parts 5}]
As in the proof of \eqref{eqn: int by parts 3}, for a function $G(x,y)$, use $\frac{\partial x}{\partial v}G$ to denote the directional derivative of $G$ with respect to $x$, and use $\frac{\partial y}{\partial v}G$ to denote the directional derivative with respect to $y$. Recall that we have
$$\frac{\partial x}{\partial v}d(x,y) + \frac{\partial y}{\partial v}d(x,y) = O(d(x,y)^2),$$
therefore
$$
(\frac{\partial x}{\partial v}+\frac{\partial y}{\partial v})\Big[d(x,y)^{-1} \phi'\Big(\frac{d(x,y)}{r}\Big)\Big] = O(1).
$$
We have
\begin{align*}
&
\partial_v H(x,r) 
\\ =& 
-\int |u(y)|^2 \frac{\partial x}{\partial v}\Big[d(x,y)^{-1} \phi'\Big(\frac{d(x,y)}{r}\Big) \Big]dy
\\ =&
\int |u(y)|^2 \frac{\partial y}{\partial v}\Big[d(x,y)^{-1} \phi'\Big(\frac{d(x,y)}{r}\Big) \Big]dy + O(\int_{B_x(r)}|u|^2)
\\ =&
-\int \frac{\partial }{\partial v} |u(y)|^2 d(x,y)^{-1} \phi'\Big(\frac{d(x,y)}{r}\Big) dy 
\\ \quad &
- \int |u(y)|^2d(x,y)^{-1} \phi'\Big(\frac{d(x,y)}{r}\Big)\,\text{div}(v) dy+ O(rH_\phi(x,r))
\\ =&
 -2\int u(y)\cdot\nabla_v u(y) \, d(x,y)^{-1} \phi'\Big(\frac{d(x,y)}{r}\Big) \, dy + O(rH_\phi(x,r))
\end{align*}
The last equality follows from $|\text{div}(v)|=O(r)$ and $\int_{B_x(r)}|u|^2=O(rH_\phi(x,r))$.
\end{proof}

\begin{remark} \label{r: flat case}
When both $X$ and $\mathcal{V}$ are flat, all the curvature terms in the computations above are zero. Therefore, proposition \ref{prop: int by parts} becomes
\begin{align*}
D_\phi(x,r) &= -\frac{1}{r}\int \phi'\Big(\frac{d(x,y)}{r}\Big) \nabla_{\nu_x}u(y)\cdot u(y)\,dy,\\
\partial_r D_\phi(x,r) &= \frac{2}{r} D_\phi(x,r) + \frac{2}{r^2} E_\phi(x,r)\\
\partial_v D_\phi(x,r) &=   -\frac{2}{r}\int \phi'\Big(\frac{d(x,y)}{r}\Big) \nabla_{\nu_x}u(y)\cdot \nabla_v u(y)\,dy\\
\partial_r H_\phi(x,r) &= \frac{3}{r}H_\phi(x,r) + 2 D_\phi(x,r)\\
\partial_v H_\phi(x,r) &= -2\int u(y)\cdot\nabla_v u(y) \, d(x,y)^{-1} \phi'\Big(\frac{d(x,y)}{r}\Big) \, dy 
\end{align*}
\end{remark}

\begin{Corollary}\label{cor: der of N}
Let $\eta_x(y)=d(x,y)\cdot\nu_x(y)$. Under the assumptions of proposition \ref{prop: int by parts}, one has
\begin{multline} \label{eqn: v der of N}
\partial_v N_\phi(x,r)=\frac{2}{H_\phi(x,r)} \int -\frac{1}{d(x,y)}\phi'\Big(\frac{d(x,y)}{r}\Big)\cdot
\\
(\nabla_{\eta_x}u(y)-N_\phi(x,r) u(y))\cdot \nabla_v u(y) \,dy  + O(r).
\end{multline}
\begin{multline} \label{eqn: r der of N}
\partial_r N_\phi(x,r)=\frac{2}{rH_\phi(x,r)}\int -\phi'\Big(\frac{d(x,y)}{r}\Big)\cdot
\\
 d(x,y)^{-1} |\nabla_{\eta_x} u(y) - N_\phi(x,r) u(y) |^2 \, dy + O(r),
\end{multline}
As a consequence, there exists a constant $C$, such that $\Big(N_\phi(x,r)+Cr^2\Big)$ is increasing in $r$.
\end{Corollary}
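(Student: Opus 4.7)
The strategy is to differentiate $N_\phi(x,r) = rD_\phi(x,r)/H_\phi(x,r)$ by the quotient rule in each variable, substitute the five integration-by-parts identities of Proposition \ref{prop: int by parts}, and then recognize the outcome as a completed square in the combination $\nabla_{\eta_x}u - N_\phi u$.

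For the radial derivative I would write
$$\partial_r N_\phi = \frac{D_\phi}{H_\phi} + \frac{r\,\partial_r D_\phi}{H_\phi} - \frac{rD_\phi\,\partial_r H_\phi}{H_\phi^2}$$
and substitute \eqref{eqn: int by parts 2} and \eqref{eqn: int by parts 4}. The coefficients of $D_\phi/H_\phi$ cancel in the pattern $1 + 2 - 3 = 0$, and the curvature errors collapse to $O(r)$ after division, using the a priori bound $N_\phi\le\Lambda$ from Lemma \ref{lem: mono of N}. What remains is $\partial_r N_\phi = (2/(rH_\phi))(E_\phi - rN_\phi D_\phi) + O(r)$. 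To rewrite this as a square, I would expand
$$|\nabla_{\eta_x}u - N_\phi u|^2 = |\nabla_{\eta_x}u|^2 - 2N_\phi\langle\nabla_{\eta_x}u,u\rangle + N_\phi^2|u|^2,$$
multiply by the weight $-\phi'(d(x,y)/r)/d(x,y)$, and integrate: using $\nabla_{\eta_x}=d\cdot\nabla_{\nu_x}$, the quadratic piece is exactly $E_\phi$; the cross term equals $-2rN_\phi D_\phi$ up to $O(r^2 H_\phi)$ via \eqref{eqn: int by parts 1}; and the last is $N_\phi^2 H_\phi = rN_\phi D_\phi$ by the definition of $N_\phi$. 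Summing gives $E_\phi - rN_\phi D_\phi$ modulo an $O(r^2H_\phi)$ curvature remainder, and dividing by $rH_\phi$ produces \eqref{eqn: r der of N}.

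The derivation of \eqref{eqn: v der of N} follows the same pattern starting from $\partial_v N_\phi = (r\partial_v D_\phi - N_\phi\,\partial_v H_\phi)/H_\phi$: substituting \eqref{eqn: int by parts 3} and \eqref{eqn: int by parts 5} yields two integrals which, after rewriting $\nabla_{\nu_x}$ as $d^{-1}\nabla_{\eta_x}$, both carry the common weight $-\phi'(d/r)/d$, so linearity in $\nabla_v u$ assembles them into the single integral featuring $(\nabla_{\eta_x}u - N_\phi u)\cdot\nabla_v u$. Monotonicity is immediate from \eqref{eqn: r der of N}: since $\phi$ is nonincreasing the integrand is pointwise nonnegative, whence $\partial_r N_\phi\ge -Cr$ for a suitable constant $C$, and therefore $N_\phi + Cr^2$ is nondecreasing in $r$ after mild readjustment of $C$.

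The principal obstacle is organizational rather than conceptual: each equation in Proposition \ref{prop: int by parts} carries its own $O(H_\phi)$ or $O(rH_\phi)$ correction, and one must verify that after division by $rH_\phi$ and multiplication by bounded factors like $N_\phi$, all of these remainders uniformly collapse to the single $O(r)$ error claimed in the statement. The uniform bound $N_\phi\le\Lambda$ supplied by Lemma \ref{lem: mono of N} is what permits absorbing the implicit dependence on $N_\phi$ into the universal constants on $B_{x_0}(32R)\times(0,32R]$.
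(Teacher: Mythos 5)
Your proposal is correct and follows essentially the same route as the paper: apply the quotient rule, substitute the identities of Proposition \ref{prop: int by parts}, complete the square using \eqref{eqn: int by parts 1} to identify the cross term $-2rN_\phi D_\phi$ up to $O(r^2H_\phi)$, and deduce monotonicity from the nonnegativity of the resulting integrand. The bookkeeping you describe (absorbing the $O(H_\phi)$ and $O(rH_\phi)$ remainders into $O(r)$ via the uniform bound $N_\phi\le\Lambda$) is exactly what the paper's terser computation implicitly relies on.
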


\begin{proof}
The first equation follows immediately from proposition \ref{prop: int by parts} by combining equations \eqref{eqn: int by parts 3} and \eqref{eqn: int by parts 5}.
For the first one, lemma \ref{prop: int by parts} gives
$$
\partial_r N_\phi(x,r) = \frac{2}{rH_\phi(x,r)}\Big(E_\phi(x,r)-\frac{r^2D_\phi(x,r)^2}{H_\phi(x,r)}\Big)+O(r),
$$
and we have
\begin{align*}
& E_\phi(x,r)-\frac{r^2D_\phi(x,r)^2}{H_\phi(x,r)}
\\ = & 
E_\phi(x,r)-2rD_\phi(x,r) N_\phi(x,r) + N_\phi(x,r)^2 H_\phi(x,r) 
\\ = &
\int -\phi'\Big(\frac{d(x,y)}{r}\Big) d(x,y)^{-1} |\nabla_{\eta_x} u(y) - N_\phi(x,r) u(y) |^2 \, dy +O(r^2H_\phi(x,r))
\end{align*}
Hence the second equation is verified.

\end{proof}

\section{Compactness}
This section proves a compactness result for $\mathbb{Z}/2$ harmonic spinors.

Consider the ball $\Omega=\bar B(5)\subset \mathbb{R}^4$ centered at the origin. Let $\mathcal{V}$ be a fixed trivial vector bundle on $\Omega$. Assume $g_n$ is a sequence of Riemannian metrics on $\Omega$, $A_n$ is a sequence of connenction forms on $\mathcal{V}$, and $\rho_n$ is a sequence of Clifford bundle structures of $\mathcal{V}$. Assume that $(g_n,A_n,\rho_n)$ are compatible, and assume that
$(g_n,A_n,\rho_n)$ converge to $(g,A,\rho)$ in $C^\infty$. Assume $g$ is the Euclidean metric on $\bar B(5)$. Then for sufficiently large $n$, the injectivity radius at each point in $B(2)$ is at least $2.5$. Without loss of generality, assume that this property holds for every $n$.

Fix $\epsilon, \Lambda>0$. For every $n$, assume $U_n$ is a 2-valued section of $\mathcal{V}$ defined on $\bar B(5)$, with the following properties:
\begin{enumerate}
\item The section $U_n$ is a $\mathbb{Z}/2$ harmonic spinor on $\bar B(5)$ with respect to $(g_n,A_n,\rho_n)$.
\item $U_n$ satisfies assumption \ref{assumption} with respect to $\epsilon$.
\item Let $N^{(n)}_\phi$ be the smoothed frequency function for the extended $U_n$. Then whenever $N_\phi(x,r)$ is defined,
$$
N^{(n)}_\phi(x,r)\le \Lambda.
$$
\item Let $H^{(n)}_\phi$ be the smoothed height function of $U_n$, then $H^{(n)}_\phi(0,1)= 1$.
\end{enumerate}

The main result of this section is the following proposition.

\begin{Proposition} \label{p: compact}
Let $U_n$ be given as above. Then there exits a subsequence of $\{U_n\}$, such that the sequence converges strongly in $W^{1,2}(\bar B(2))$ to a section $U$. The section $U$ is a $\mathbb{Z}/2$ harmonic spinor on $\bar B(2)$ with respect to $(g,A,\rho)$, and $U$ satisfies assumption \ref{assumption} for a possibly smaller value of $\epsilon$. Moreover, $U_n$ converges to $U$ uniformly on $\bar B(2)$.
\end{Proposition}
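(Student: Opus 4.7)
The plan is to establish uniform $W^{1,2}$ and $L^\infty$ bounds on $\bar B(3)$, extract a weakly convergent subsequence with limit $U$, upgrade to strong $W^{1,2}$ and uniform convergence on $\bar B(2)$, and then verify that $U$ is a $\mathbb{Z}/2$ harmonic spinor satisfying Assumption \ref{assumption}.

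First, the normalization $H^{(n)}_\phi(0,1)=1$ together with the $H_\phi$ comparison lemma from Section 4 (the unlabeled lemma between Definition 4.1 and Lemma \ref{lem: bound |u||nabla u|}) yields uniform two-sided bounds on $H^{(n)}_\phi(x,r)$ for $x\in\bar B(3)$ and $r$ in a fixed compact subrange of $(0,3]$. Lemma \ref{lem: bound |u||nabla u|} then provides uniform $L^2$ bounds on $u_n$ and $\nabla u_n$ over $\bar B(3)$, and Lemma \ref{lem: C0 estimate} upgrades this to a uniform $L^\infty$ bound on $|u_n|$ over $\bar B(2)$. By Rellich--Kondrachov a subsequence satisfies $U_n\rightharpoonup U$ weakly in $W^{1,2}(\bar B(3))$, strongly in $L^p(\bar B(3))$ for every $p<\infty$, and pointwise almost everywhere. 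Since $(g_n,A_n,\rho_n)\to(g,A,\rho)$ in $C^\infty$, the distributional equation $DU=0$ passes to the limit. On any open $\Omega'\Subset\bar B(2)\setminus Z$ with $Z=\{|U|=0\}$, local signs of $u_n$ can be matched with $u$ for large $n$, and standard elliptic regularity for Dirac operators with smoothly convergent coefficients yields $C^k$ convergence on $\Omega'$; thus $U$ is smooth on $\bar B(2)\setminus Z$ with $Du=0$ there.

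Strong $W^{1,2}$ convergence follows from a test-function version of the Weitzenb\"ock identity. Using $\nabla^\dagger\nabla u_n=-\mathcal{R}^{(n)}_0 u_n$ and integration by parts, for any nonnegative cutoff $\psi$ supported in $\bar B(3)$,
\[
\int\psi|\nabla u_n|^2 = -\int\psi\,\langle u_n,\mathcal{R}^{(n)}_0 u_n\rangle - \tfrac12\int\nabla\psi\cdot\nabla|u_n|^2.
\]
The first term converges by strong $L^2$ convergence of $u_n$ and $C^\infty$ convergence of $\mathcal{R}^{(n)}_0$; the second converges because $\nabla|u_n|^2=2\,\mathrm{Re}\langle u_n,\nabla u_n\rangle$, with $u_n\to u$ strongly in $L^2$, $u_n$ uniformly bounded in $L^\infty$, and $\nabla u_n\rightharpoonup\nabla u$ weakly in $L^2$. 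Hence $\int\psi|\nabla u_n|^2\to\int\psi|\nabla u|^2$, and approximating $\chi_{\bar B(2)}$ by nonnegative cutoffs together with weak lower semicontinuity upgrades this to strong $W^{1,2}(\bar B(2))$ convergence. For uniform convergence I would first show $|U_n|^2\to|U|^2$ uniformly on $\bar B(2)$: the distributional inequality $\Delta|u_n|^2\ge -C|u_n|^2$ from Weitzenb\"ock together with the uniform $L^\infty$ bound yields uniform $C^\alpha$ estimates on $|u_n|^2$ via a De Giorgi--Nash--Moser argument, and strong $L^2$ convergence plus Arzel\`a--Ascoli then gives uniform convergence. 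Given this, around any $x_0\notin Z$ the local sign of $u_n$ stabilizes and Lemma \ref{lem: C0 estimate} applied to the sign-matched difference gives $\sup|u_n-u|^2\le K\int|u_n-u|^2\to 0$; around any $x_0\in Z$ one has $\dist(U_n,U)\le|U_n|+|U|$, which is uniformly small on a small enough neighborhood by the uniform convergence of $|U_n|^2$ and continuity of $|U|$. A finite cover of $\bar B(2)$ completes the argument.

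For the last step, $H_\phi(0,1)=\lim H^{(n)}_\phi(0,1)=1>0$ gives nontriviality of $U$, and the remaining conditions in the definition of $\mathbb{Z}/2$ harmonic spinor follow from the previous steps. For Assumption \ref{assumption}, the hypothesis on the $U_n$ with fixed $\epsilon$ combined with Lemma \ref{lem: mono of H} forces a uniform lower bound $N^{(n)}_\phi(x_n,r)\ge\epsilon/2$ at every zero $x_n$ of $U_n$ for all $r$ below a universal radius; strong $W^{1,2}$ convergence and Corollary \ref{cor: der of N} propagate this to $N_\phi(x,r)\ge\epsilon/2-O(r^2)$ at each $x\in Z$. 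Integrating the differential inequality on $\partial_r\ln(H_\phi/r^3)$ implied by \eqref{eqn: int by parts 4} yields $H_\phi(x,r)\le Cr^{3+\epsilon'}$ for some $\epsilon'\in(0,\epsilon)$, and Lemma \ref{lem: bound |u||nabla u|} then gives the desired bound $\int_{B_x(r)}|U|^2\le Cr^{4+\epsilon'}$. The main obstacle is the uniform convergence near the irregular zero set $Z$, handled via the sign-independent scalar $|u_n|^2$ and an elliptic regularity argument for it; a secondary delicate point is extracting a uniform frequency lower bound at zeros from Assumption \ref{assumption} on the $U_n$ and transporting it to the limit.
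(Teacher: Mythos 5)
Your overall skeleton (uniform bounds, weak $W^{1,2}$ limit, upgrade to strong $W^{1,2}$ and uniform convergence, then verify the limit is a $\mathbb{Z}/2$ harmonic spinor) matches the paper, and your Weitzenb\"ock/test-function route to strong $W^{1,2}$ convergence is a legitimate alternative to the paper's argument (which compares $D^{(n)}$ and $D$ through the identity \eqref{eqn: der of H int by parts} and the uniform convergence of $|u_n|$). However, there is a genuine gap at the uniform convergence step, and it propagates into your verification of Assumption \ref{assumption}.

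The gap: the differential inequality $\Delta|u_n|^2\ge -C|u_n|^2$ makes $|u_n|^2$ a (nonnegative, bounded) subsolution, and De Giorgi--Nash--Moser gives subsolutions only a local sup bound (this is exactly Lemma \ref{lem: C0 estimate}), \emph{not} a H\"older modulus of continuity; nonnegative bounded subsolutions can be discontinuous, so no uniform $C^\alpha$ bound on $|u_n|^2$ follows from this inequality alone. The equicontinuity of $\{U_n\}$ near the zero sets is precisely the hard point of the proposition, and it is where hypothesis (2) — Assumption \ref{assumption} with a \emph{uniform} $\epsilon$ — must enter: the paper invokes Taubes's estimate from \cite[Section 3(e)]{taubesZ2}, which produces a uniform $C^\alpha$ bound with $\alpha$ depending on $\epsilon$ and $\Lambda$, via the frequency function (roughly, $N\gtrsim\epsilon$ at zeros forces $H(x,r)\lesssim r^{3+2\alpha}$, hence pointwise decay of $|u_n|$ near $Z_n$). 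Your proof never actually uses hypothesis (2) at this stage, which is a sign the argument cannot close. Once the uniform $C^\alpha$ bound is in hand, the paper's verification of Assumption \ref{assumption} for $U$ is immediate ($U$ is $C^\alpha$, so $\int_{B_x(r)}|U|^2\lesssim r^{4+2\alpha}$ at zeros), whereas your route — transporting a frequency lower bound from zeros of $U_n$ to zeros of $U$ — has the additional unresolved issue that a zero of $U$ need not be a limit of zeros of $U_n$, so you have no lower bound on $N^{(n)}_\phi$ at such a point to pass to the limit. Replacing the DGNM step by the citation of Taubes's H\"older estimate repairs both problems at once.
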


\begin{proof}
Fix a trivialization of $\mathcal{V}$, and fix $s\in(0,0.5)$.
The bound on $N_\phi^{(n)}$ and the assumption that $H^{(n)}_\phi(0,1)= 1$ implies that $\|U\|_{L^2(\bar B(2+s))} \le C_1$ for some constant $C_1$. The upper bound on $N_\phi$ then implies
$\|\nabla_{A_n}U\|_{L^2(\bar B(2+s/2))} \le C_2$.
Since $A_n\to A$ in $C^\infty$, this implies that $U_n$ is bounded in $W^{1,2}(\bar B(2+s/2))$. Therefore, there is a subsequence of $\{U_n\}$ which converges weakly in $W^{1,2}(\bar B(2+s/2))$ and converges strongly in $L^2(\bar B(2+s/2))$. To avoid complicated notations, the subsequence is still denoted by $\{U_n\}$. Denote the limit of $\{U_n\}$ on $\bar B(2+s/2)$ by $U$. Let $H^{(n)}_\phi$, $D^{(n)}_\phi$, $N^{(n)}_\phi$ be the smoothed frequency functions for $U_n$, let $H_\phi$, $D_\phi$, $N_\phi$ be the corresponding functions for $U$. Since $U_n\to U$ strongly in $L^2$, one has $H_\phi(0,1)=1$, thus $U$ is not identically $2[\![0]\!]$.

By \cite[Section 3(e)]{taubesZ2}, there exists constants $K>0$ and $\alpha\in(0,1)$, depending on $\epsilon$, $\Lambda$, $R$ and the $C^1$ norms of the curvatures of $\{g_n\}$ and $A_n$, such that
$$
\|U_n\|_{C^\alpha(\bar B(2+s/2))} \le K.
$$
By the Arzela-Ascoli theorem, there exists a further subsequence of $\{U_n\}$ which converges uniformly to $U$ on $\bar B(2+s/2)$. Still denote this subsequence by $\{U_n\}$. Since solutions to the Dirac equation are closed under $C^0$ limits, $U$ is a $\mathbb{Z}/2$ harmonic spinor. $U$ is also H\"older continuous, so it satisfies assumption \ref{assumption}. 

Locally write $U_n$ as $[\![u_n]\!]+[\![-u_n]\!]$, and write $U$ as $[\![u]\!]+[\![-u]\!]$. The weak convergence of $U_n$ to $U$ implies
$$
\liminf_{n\to\infty} \int_{\bar B(2)}|\nabla_{A_n}u_n|^2 \ge \int_{\bar B(2)}|\nabla_{A}u|^2.
$$
We want to prove that 
$$
\lim_{n\to\infty} \int_{\bar B(2)}|\nabla_{A_n}u_n|^2 = \int_{\bar B(2)}|\nabla_{A}u|^2.
$$
Assume the contrary, then there exists a subsequence of $n$ such that 
$$
\int_{\bar B(2)}|\nabla_{A_n}u_n|^2 \ge \int_{\bar B(2)}|\nabla_{A}u|^2+\delta
$$
for some $\delta>0$.
Since $\int_{\bar B(r)}|\nabla_{A}u|^2$ is continuous in $r$, and $\int_{\bar B(r)}|\nabla_{A_n}u_n|^2$ is non-decreasing in $r$ for every $n$, there exists $r\in(2,2+s/2)$ and $\sigma\in(1,(2+s/2)/r)$, such that for every $t\in[2,r]$,
\begin{equation}\label{i: compare of int nabla u}
\int_{\bar B(t)}|\nabla_{A_n}u_n|^2 \ge \int_{\bar B(\sigma t)}|\nabla_{A}u|^2+\delta/2
\end{equation}
Use $B_n(t)$ to denote the geodesic ball of center $0$ and radius $t$ with metric $g_n$. Since $g_n\to g$, we have $\bar B(t)\subset B_n(\sigma t)$ for sufficiently large $n$. Equation \eqref{i: compare of int nabla u} then gives
\begin{equation}\label{i: compare of int nabla u in geodesic ball}
\int_{B_n(\sigma t)}|\nabla_{A_n}u_n|^2 \ge \int_{\bar B( \sigma t)}|\nabla_{A}u|^2+\delta/2, \quad \text{for }t\in [2,r]
\end{equation}
when $n$ is sufficiently large.

By equation \eqref{eqn: der of H int by parts}, for every $t$,
$$
\partial_t H^{(n)}(0,t)=\frac{3}{t}H^{(n)}(0,t) + 2D^{(n)}(0,t) + \int_{B_n(t)} \langle u, \mathcal{R}^{(n)} u \rangle + \int_{\partial B_n(t)} \mathfrak{t}^{(n)}|u|^2,
$$
$$
\partial_t H(0,t)=\frac{3}{t}H(0,t) + 2D(0,t) + \int_{\bar B(t)} \langle u, \mathcal{R} u \rangle + \int_{\partial \bar B(t)} \mathfrak{t}|u|^2,
$$
where $\mathcal{R}^{(n)}$ and $\mathfrak{t}^{(n)}$ are bounded terms that are uniformly convergent to $\mathcal{R}$ and $\mathfrak{t}$ as $n$ goes to infinity.
The uniform convergence of $|u_n|$ and $g_n$ then imply
$$
\lim_{s\to\infty} \int_{2\sigma}^{\sigma r}D^{(n)}(0,t)\,dt=\int_{2\sigma}^{\sigma r}D(0,t)\,dt,
$$
which contradicts \eqref{i: compare of int nabla u in geodesic ball}.
In conclusion, 
$$
\lim_{n\to\infty} \int_{\bar B(2)}|\nabla_{A_n}u_n|^2 = \int_{\bar B(2)}|\nabla_{A}u|^2.
$$ 
Since $(A_n,g_n)\to (A,g)$ in $C^\infty$, this implies 
$$\lim_{n\to\infty} \|U_i\|_{W^{1,2}}(\bar B(2)) = \|U\|_{W^{1,2}}(\bar B(2)),$$
 therefore $U_i$ convergence strongly to $U$ in $W^{1,2}(\bar B(2))$.
\end{proof}

\begin{Corollary} \label{cor: compactness for affine case}
Let $\sigma>1$. Let $g_*$ be a metric on $\mathbb{R}^4$ given by a constant metric matrix, such that all eigenvalues of the matrix are in the interval $[\sigma^{-2},\sigma^2]$.

Assume $\{(g_n,A_n,\rho_n)\}_{n\ge 1}$ is a sequence of geometric data on $\bar B(5\sigma^2)$, and assume $(g_n,A_n,\rho_n)$ converge to $(g_*,A,\rho)$ in $C^\infty$. Let $U_n$ be a $\mathbb{Z}/2$ harmonic spinor on $\bar B(5\sigma^2)$ with respect to $(g_n,A_n,\rho_n)$, such that the sequence $U_n$ satisfies conditions (2) to (4) listed before proposition \ref{p: compact}. Then a subsequence of $U_n$ converges to a $\mathbb{Z}/2$ harmonic spinor in $W^{1,2}(\bar B(2))$ with respect to $(g,A,\rho)$. The limit $U$ satisfies assumption \ref{assumption}, and the sequence $U_n$ converges to $U$ uniformly.
\end{Corollary}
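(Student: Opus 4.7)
The strategy is to reduce the corollary to Proposition \ref{p: compact} by a fixed linear change of coordinates that straightens out the constant limit metric $g_*$. Since $g_*$ is a positive definite symmetric matrix with eigenvalues in $[\sigma^{-2},\sigma^2]$, the linear map $\Phi(y) := g_*^{-1/2}\,y$ satisfies $\Phi^{*}g_* = g_{\mathrm{Euc}}$, and both $\Phi$ and $\Phi^{-1}$ have operator norm at most $\sigma$; in particular $\Phi$ maps $\bar B(5\sigma)$ into $\bar B(5\sigma^2)$, while $\Phi^{-1}(\bar B(2)) \subset \bar B(2\sigma)$. I would then pull everything back by $\Phi$: set $\tilde g_n := \Phi^{*}g_n$, $\tilde A_n := \Phi^{*}A_n$, $\tilde\rho_n(e') := \rho_n(d\Phi(e'))$, and $\tilde U_n := \Phi^{*}U_n$ on $\bar B(5\sigma)$. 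Since $\Phi$ is a fixed linear isomorphism and $(g_n, A_n, \rho_n) \to (g_*, A, \rho)$ in $C^\infty$, one has $(\tilde g_n, \tilde A_n, \tilde\rho_n) \to (g_{\mathrm{Euc}}, \Phi^{*}A, \tilde\rho)$ in $C^\infty$.

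The map $\Phi$ is an isometry between $(\bar B(5\sigma), \tilde g_n)$ and its image in $(\mathbb{R}^4, g_n)$, so each $\tilde U_n$ is a $\mathbb{Z}/2$ harmonic spinor for $(\tilde g_n, \tilde A_n, \tilde\rho_n)$; Assumption \ref{assumption} transfers (possibly with adjusted constants); the smoothed frequency functions are isometry invariants, so the bound $N_\phi^{(n)} \le \Lambda$ is preserved; and $\tilde H_\phi^{(n)}(0,1) = H_\phi^{(n)}(0,1) = 1$ because $\Phi$ fixes the origin. The next step is to apply Proposition \ref{p: compact}, but with the outer and inner radii $5$ and $2$ replaced by $5\sigma$ and $2\sigma$. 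The proof of that proposition is insensitive to this replacement: its only essential use of the specific radii is to guarantee that the injectivity radii of $\tilde g_n$ eventually exceed a fixed quantity on the inner ball, which is automatic since $\tilde g_n \to g_{\mathrm{Euc}}$ in $C^\infty$; the remaining ingredients---uniform $W^{1,2}$ bounds from the frequency bound, Rellich compactness, the $C^\alpha$ estimate from Taubes, and the monotonicity step that upgrades weak to strong $W^{1,2}$ convergence---are scale independent. This yields a subsequence with $\tilde U_n \to \tilde U$ strongly in $W^{1,2}(\bar B(2\sigma))$ and uniformly on $\bar B(2\sigma)$, where $\tilde U$ is a $\mathbb{Z}/2$ harmonic spinor for the limit data that satisfies Assumption \ref{assumption} for a possibly smaller $\epsilon$.

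Finally, setting $U := (\Phi^{-1})^{*}\tilde U$ and using the inclusion $\bar B(2) \subset \Phi(\bar B(2\sigma))$ together with the bi-Lipschitz character of $\Phi^{-1}$ transfers the strong $W^{1,2}$ convergence, the uniform convergence, and Assumption \ref{assumption} back to the original coordinates, yielding the conclusion of the corollary. The main obstacle is verifying that the proof of Proposition \ref{p: compact} carries over with the modified radii $(5\sigma, 2\sigma)$; this is bookkeeping rather than a new idea, since the argument never exploits the specific numerical values $5$ and $2$ beyond having the outer ball strictly contain the inner ball with a uniform buffer.
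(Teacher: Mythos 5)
Your proposal is correct and follows essentially the same route as the paper: the paper's proof also pulls back by a fixed linear map $T$ with $T^*g_*$ Euclidean, applies Proposition \ref{p: compact} on the rescaled balls $\bar B(5\sigma)\supset\bar B(2\sigma)$, and then pushes the convergent subsequence forward to $\bar B(2)$. Your additional remarks (invariance of the frequency functions, transfer of Assumption \ref{assumption}, and the harmlessness of replacing the radii $5,2$ by $5\sigma,2\sigma$) are exactly the implicit bookkeeping the paper leaves to the reader.
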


\begin{proof}
Take a linear map $T:\mathbb{R}^4\to\mathbb{R}^4$ such that $T^*(g_*)$ is the Euclidean metric. Then $(T^*g_n,T^*A_n,T^*\rho_n,T^*U_n)$ gives a sequence of $\mathbb{Z}/2$ harmonic spinor on $\bar B(5\sigma)$. Since $T^*g_n$ converges to the Euclidean metric, one can apply lemma \ref{p: compact} and find a convergent subsequence on $\bar B(2\sigma)$. Now pull back by $T^{-1}$, one obtains a convergent subseqence of $U_n$ on $\bar B(2)$.
\end{proof}

\section{Frequency pinching estimates}
For $x\in B_{x_0}(32R)$ and $0<s<r\le 32R$, define
$$
W_{s}^{r}(x)= N_\phi(x,r)-N_\phi(x,s).
$$
This section proves the following estimate
\begin{Proposition} \label{prop: pinching}
There exists a constant $C$ with the following property. Let $r\in (0,8R]$. Assume $x_1, x_2\in B_{x_0}(32R)$, such that $d(x_1,x_2)\le r/4$. Let $x$ be a point on the short geodesic $\gamma$ bounded by $x_1$ and $x_2$.  Let $v$ be a unit tangent vector of $\gamma$ at $x$. Then 
$$
d(x_1,x_2)\cdot |\partial_v N_\phi(x,r)| \le C\Big[ \sqrt{|W^{4r}_{r/4}(x_1)|} + \sqrt{|W^{4r}_{r/4}(x_2)|} + r \Big].
$$
\end{Proposition}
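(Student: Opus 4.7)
My plan is to merge the integral representation of $\partial_v N_\phi(x,r)$ from Corollary \ref{cor: der of N} with a first-variation identity along the geodesic, and to apply Cauchy--Schwarz in a way that produces the pinching quantities $W^{4r}_{r/4}(x_i)$ rather than the uncontrolled derivative $\partial_r N_\phi(x,r)$ at the center $x$. With $w_x(y) = -d(x,y)^{-1}\phi'(d(x,y)/r) \ge 0$ supported in $B_x(r) \setminus B_x(3r/4)$, equation \eqref{eqn: v der of N} gives
\[\partial_v N_\phi(x,r) = \frac{2}{H_\phi(x,r)} \int w_x(y)\,\bigl(\nabla_{\eta_x}u - N_\phi(x,r) u\bigr) \cdot \nabla_v u(y) \, dy + O(r).\]
Working in normal coordinates at $x$ and using that $v$ is tangent to the geodesic from $x_1$ to $x_2$ at $x$, I would establish the approximate identity
\[d(x_1, x_2)\, v(y) = \eta_{x_1}(y) - \eta_{x_2}(y) + E(y), \qquad |E(y)| \lesssim r^3,\]
which is exact in Euclidean space (both sides equal $x_2 - x_1$) and has the stated curvature-bounded error under the bounded-geometry hypothesis \eqref{i: close to euclidean}.

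Substituting, multiplying by $d(x_1, x_2)$, and estimating the $E$-contribution by Cauchy--Schwarz together with Lemma \ref{lem: bound |u||nabla u|}, I would obtain
\[d(x_1, x_2)\,\partial_v N_\phi(x, r) = \frac{2}{H_\phi(x,r)} \int w_x\bigl(\nabla_{\eta_x} u - N_\phi(x,r) u\bigr) \cdot \bigl(\nabla_{\eta_{x_1}} u - \nabla_{\eta_{x_2}} u\bigr)\,dy + O(r^2).\]
Writing $\nabla_{\eta_{x_1}}u - \nabla_{\eta_{x_2}}u = \sum_{i=1,2}(-1)^{i+1}(\nabla_{\eta_{x_i}}u - N_\phi(x_i, r)u) + (N_\phi(x_1,r) - N_\phi(x_2,r))u$, the $u$-term contributes only $O(r^2)$ via the near-orthogonality $\int w_x (\nabla_{\eta_x}u - N_\phi(x,r)u)\cdot u\,dy = O(r^2 H_\phi)$, which follows from equation \eqref{eqn: int by parts 1} together with the definition of $N_\phi$. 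Each remaining term is bounded by $H_\phi^{-1}\sqrt{A_x B_i}$, where $A_x = \int w_x|\nabla_{\eta_x}u - N_\phi(x,r)u|^2\,dy$ and $B_i = \int w_x|\nabla_{\eta_{x_i}}u - N_\phi(x_i, r)u|^2\,dy$. The bound $A_x \lesssim H_\phi(x,r)$ is routine: since $|\eta_x| \le r$ on $\supp w_x$, one has $A_x \lesssim r^2 \int w_x|\nabla u|^2 + \Lambda^2 H_\phi \lesssim H_\phi$ by Lemma \ref{lem: bound |u||nabla u|}.

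The technical heart is the estimate $B_i \lesssim H_\phi(x, r)\bigl(|W^{4r}_{r/4}(x_i)| + r^2\bigr)$. I would introduce the averaged weight $\omega_i(y) := \int_{r/4}^{4r}\frac{-\phi'(d(x_i, y)/s)}{s\, d(x_i, y)}\, ds$; the change of variable $t = d(x_i, y)/s$ yields $\omega_i(y) = c_0/d(x_i, y)$ with $c_0 = \int_{3/4}^{1}(-\phi'(t)/t)\,dt > 0$, valid on the annulus $\{d(x_i, y) \in [r/2, 5r/4]\}$, which contains $\supp w_x$ since $d(x_1, x_2) \le r/4$. Hence $w_x \le C\omega_i$ pointwise, so expanding $|\nabla_{\eta_{x_i}}u - N_\phi(x_i, r)u|^2 \le 2|\nabla_{\eta_{x_i}}u - N_\phi(x_i, s)u|^2 + 2(N_\phi(x_i, s) - N_\phi(x_i, r))^2 |u|^2$ inside the $s$-integral, the first piece collapses by equation \eqref{eqn: r der of N} to $\int_{r/4}^{4r} H_\phi(x_i, s)(\partial_s N_\phi(x_i,s) + O(s))\,ds$, which is $\lesssim H_\phi(x,r)|W^{4r}_{r/4}(x_i)| + r^2 H_\phi(x,r)$ after invoking Lemma \ref{lem: mono of H} and the $H_\phi$-comparison lemma to keep $H_\phi(x_i, s)$ comparable to $H_\phi(x,r)$. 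For the second piece, the frequency monotonicity gives $|N_\phi(x_i, s) - N_\phi(x_i, r)| \le |W^{4r}_{r/4}(x_i)| + O(r^2) \le 2\Lambda$, and squaring and using $|W|^2 \le 2\Lambda|W|$ produces the same $|W| + r^2$ bound. Assembling, $\sqrt{A_x B_i}/H_\phi \lesssim \sqrt{|W^{4r}_{r/4}(x_i)|} + r$, and summing over $i=1,2$ (plus absorbing all $O(r^2)$ errors into $Cr$) yields the proposition.

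The main obstacle I anticipate is the scale-transfer step: the bound on $B_i$ requires converting a quantity naturally weighted at $x$ into one controlled by the radial-pinching of $N_\phi$ at $x_i$ across scales in $[r/4, 4r]$, even though the weight $w_x$ is supported near the single scale $r$ around $x$. The averaging weight $\omega_i$ is precisely the device that makes this exchange possible, and it forces the appearance of both $W^{4r}_{r/4}(x_i)$ and the $H_\phi$-oscillation bounds. A secondary subtlety is verifying the curvature-bounded error $|E| \lesssim r^3$ in the geometric identity, which relies on working in the normal coordinates at $x$ together with the closeness-to-Euclidean estimate \eqref{i: close to euclidean}.
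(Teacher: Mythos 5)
Your argument follows the paper's proof in all essentials: the same integral representation \eqref{eqn: v der of N}, the same replacement of $d(x_1,x_2)\,v$ by $\eta_{x_1}-\eta_{x_2}$ via normal coordinates (Lemma \ref{l: Euclidean approx}), the same three-term decomposition handled by near-orthogonality and Cauchy--Schwarz, and your inline derivation of $B_i\lesssim H_\phi(x,r)\big(|W^{4r}_{r/4}(x_i)|+r^2\big)$ via the scale-averaged weight $\omega_i$ is exactly the content and proof of the paper's Lemma \ref{lem: pinching with fixed center}. The only quibble is that Lemma \ref{l: Euclidean approx} gives $|E|=O(r^2)$ rather than the $O(r^3)$ you claim, but $O(r^2)$ already suffices to absorb that contribution into the final $O(r)$ error.
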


The proof is adapted from the arguments in \cite[Section 4]{rec}.
First, one needs to prove the following lemma.
\begin{Lemma} \label{lem: pinching with fixed center}
There exists a constant $C$, such that for every $x\in B_{x_0}(32R)$ and $r\le 8R$, one has
$$
 \int_{B_{x}(3r)-B_x(r/3)}  |\nabla_{\eta_x} u(y) - N_\phi(x,d(x,y)) u(y) |^2 dy\le CrH_\phi(x,r)(W_{r/4}^{4r}(x)+Cr^2).
$$
\end{Lemma}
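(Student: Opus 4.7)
The plan is to integrate equation \eqref{eqn: r der of N} with respect to $s$ over $[r/4, 4r]$ and apply Fubini's theorem to convert the resulting $(s,y)$-integral into an integral over the annulus $B_x(3r) - B_x(r/3)$. Rewriting \eqref{eqn: r der of N}, for each $s$ one has
\[
\int -\phi'\Big(\frac{d(x,y)}{s}\Big) d(x,y)^{-1} \big|\nabla_{\eta_x} u - N_\phi(x,s) u\big|^2 \, dy = \frac{s H_\phi(x,s)}{2}\, \partial_s N_\phi(x,s) + O\big(s^2 H_\phi(x,s)\big).
\]
I would integrate both sides over $s \in [r/4, 4r]$, then use Corollary \ref{cor: der of N} (which gives $\partial_s N_\phi(x,s) + 2Cs \ge 0$) together with the earlier height-comparison lemma (which gives $H_\phi(x,s) \le C H_\phi(x,r)$ for $s$ in this range) to bound the right-hand side by $C r H_\phi(x,r)\big(W_{r/4}^{4r}(x) + Cr^2\big)$.

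Next I would relate the left-hand side to the target integral via Fubini. For each $y$ with $\rho := d(x,y) \in [r/3, 3r]$, the support condition on $\phi'$ and the change of variable $t = \rho/s$ give
\[
\int_{r/4}^{4r} \frac{-\phi'(\rho/s)}{\rho} \, ds = \int_{\rho}^{4\rho/3} \frac{-\phi'(\rho/s)}{\rho} \, ds = \int_{3/4}^{1} \frac{-\phi'(t)}{t^2} \, dt =: c_\phi > 0,
\]
a strictly positive constant depending only on $\phi$. Multiplying $|\nabla_{\eta_x}u - N_\phi(x,d(x,y))u|^2$ by the leftmost expression, extending the $y$-domain to all of space (the integrand is nonnegative), and swapping the order of integration,
\[
c_\phi \int_{B_x(3r) - B_x(r/3)} |\nabla_{\eta_x}u - N_\phi(x,d(x,y))u|^2 \, dy \le \int_{r/4}^{4r}\!\!\int -\phi'\Big(\frac{d(x,y)}{s}\Big) d(x,y)^{-1} |\nabla_{\eta_x}u - N_\phi(x,d(x,y))u|^2\, dy\, ds.
\]
Applying $|a-b|^2 \le 2|a-c|^2 + 2|b-c|^2$ with $c = N_\phi(x,s) u$ bounds the right-hand side by twice the quantity already estimated in the previous paragraph plus an error term $E := 2\int\!\!\int -\phi'(d(x,y)/s) d(x,y)^{-1} (N_\phi(x,s) - N_\phi(x,d(x,y)))^2 |u|^2 dy \, ds$.

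The remaining step --- and the main subtlety --- is the estimation of $E$. Corollary \ref{cor: der of N} only yields $|N_\phi(x,s) - N_\phi(x,\rho)| \le W_{r/4}^{4r}(x) + Cr^2$ for $s, \rho \in [r/4, 4r]$, so the squared difference is quadratic in $W_{r/4}^{4r}(x) + r^2$, while the claimed bound is linear. I would overcome this by absorbing one factor using the uniform bound $N_\phi \le \Lambda$, obtaining $(N_\phi(x,s) - N_\phi(x,\rho))^2 \le 2\Lambda\big(W_{r/4}^{4r}(x) + Cr^2\big)$. Plugging this in, using the identity $\int_{r/4}^{4r} -\phi'(\rho/s)/\rho\, ds \le c_\phi$ for every $\rho$ to integrate out $s$, and applying Lemma \ref{lem: bound |u||nabla u|} (which gives $\int_{B_x(4r)} |u|^2 = O(rH_\phi(x,r))$), one controls $E$ by $O\big(rH_\phi(x,r)(W_{r/4}^{4r}(x) + r^2)\big)$. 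Combining with the earlier estimate completes the proof.
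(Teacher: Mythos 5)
Your argument follows the paper's proof of this lemma essentially step for step: integrate \eqref{eqn: r der of N} over $s$, control the resulting right-hand side by $W_{r/4}^{4r}(x)+Cr^2$ via the almost-monotonicity of $N_\phi$ and the comparability of $H_\phi(x,s)$ with $H_\phi(x,r)$, and convert the double integral into an integral over the annulus by Fubini. The one place you diverge is the replacement of $N_\phi(x,s)$ by $N_\phi(x,d(x,y))$: the paper expands the square and keeps only the cross term, so its error is linear in $W_{r/4}^{4r}(x)+Cr^2$ from the outset and is controlled by $\int |u||\nabla u|$ and $\int |u|^2$; you instead use $|a-b|^2\le 2|a-c|^2+2|b-c|^2$, which produces a quadratic error, and then restore linearity by absorbing one factor of $|N_\phi(x,s)-N_\phi(x,d(x,y))|$ into the uniform bound on $N_\phi$ furnished by $\Lambda$. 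Both devices work.

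One small slip should be repaired: in your estimate of $E$ you integrate $s$ over $[r/4,4r]$, and on the support of $\phi'(d(x,y)/s)$ this allows $d(x,y)$ to drop to $3r/16<r/4$, so near-monotonicity only gives $|N_\phi(x,s)-N_\phi(x,d(x,y))|\le W_{3r/16}^{4r}(x)+Cr^2$ rather than the stated $W_{r/4}^{4r}(x)+Cr^2$. The paper avoids this by shrinking the $s$-range to $[r/3,4r]$ before performing the swap (harmless, since the integrand is nonnegative and the bound from your first paragraph still applies); with $s\ge r/3$ the support forces $d(x,y)\ge 3s/4\ge r/4$, and your computation of $c_\phi$ for $\rho\in[r/3,3r]$ is unaffected because $[\rho,4\rho/3]\subset[r/3,4r]$. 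With that one-line adjustment the proof is complete.
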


\begin{proof}
By equation \eqref{eqn: r der of N},
\begin{align*}
&\int_{r/4}^{4r}\partial_s N_\phi(x,s)ds+ O(r^2)
\\ =&
\int_{r/4}^{4r}\frac{2}{sH_\phi(x,s)}\int -\phi'\Big(\frac{d(x,y)}{s}\Big) d(x,y)^{-1} |\nabla_{\eta_x} u(y) - N_\phi(x,s) u(y) |^2 \, dy ds 
\\ \ge &
\frac{1}{C_1rH_\phi(x,r)}\int_{r/4}^{4r}\int -\phi'\Big(\frac{d(x,y)}{s}\Big) d(x,y)^{-1} |\nabla_{\eta_x} u(y) - N_\phi(x,s) u(y) |^2 \, dy ds
\\ \ge &
\frac{1}{C_1rH_\phi(x,r)}\int_{r/3}^{4r}\int -\phi'\Big(\frac{d(x,y)}{s}\Big) d(x,y)^{-1} |\nabla_{\eta_x} u(y) - N_\phi(x,s) u(y) |^2 \, dy ds
\\ =&
: (A)
\end{align*}
For every pair $(y,s)$ in the support of the integration in $(A)$, one has $d(x,y)\in[r/4,4r]$, hence
$$
|N_\phi(x,s)-N_\phi(x,d(x,y))| \le W_{r/4}^{4r}(x)+C_2 r^2.
$$
Therefore,
\begin{align*}
&
(A)\ge \frac{1}{C_1rH_\phi(x,r)}\underbrace{\int_{r/3}^{4r}\int -\phi'\Big(\frac{d(x,y)}{s}\Big) d(x,y)^{-1} |\nabla_{\eta_x} u(y) - N_\phi(x,d(x,y)) u(y) |^2 \, dy ds}_{=:I}
\\ &
-\frac{C_3 (W_{r/4}^{4r}(x)+C_2 r^2)}{rH_\phi(x,r)} \underbrace{\int_{r/3}^{4r} \int -\phi'\Big(\frac{d(x,y)}{s}\Big) d(x,y)^{-1} \Big[|\nabla u(y)||u(y)| d(x,y) + |u(y)|^2 \Big]dy ds}_{=:II}.
\end{align*}
By lemma \ref{lem: bound |u||nabla u|},  $II=O(rH_\phi(x,4r))=O((rH_\phi(x,r))$. By Fubini's theorem,
$$
I = \int_{B_{x}(4r)}  |\nabla_{\eta_x} u(y) - N_\phi(x,d(x,y)) u(y) |^2 \int_{r/3}^{4r} -\phi'\Big(\frac{d(x,y)}{s}\Big) d(x,y)^{-1}  \, ds dy
$$
Notice that 
$$
\inf_{\{y|d(x,y)\in[r/3,3r]\}} \int_{r/3}^{4r}-\phi'\Big(\frac{d(x,y)}{s}\Big) d(x,y)^{-1}  \, ds >0,
$$
Therefore
$$
I\ge \frac{1}{C_4}\int_{B_{x}(3r)-B_x(r/3)}  |\nabla_{\eta_x} u(y) - N_\phi(x,d(x,y)) u(y) |^2\,dy,
$$
In conclusion,
\begin{multline*}
(A)\ge  \frac{1}{C_5 rH_\phi(x,r)}\int_{B_{x}(3r)-B_x(r/3)}  |\nabla_{\eta_x} u(y)
- N_\phi(x,d(x,y)) u(y) |^2\,dy
\\
-
C_6(W_{r/4}^{4r}(x)+C_2r^2),
\end{multline*}
hence
$$
C_7rH_\phi(x,r)(W_{r/4}^{4r}(x)+C_8 r^2) \ge \int_{B_{x}(3r)-B_x(r/3)}  |\nabla_{\eta_x} u(y) - N_\phi(x,d(x,y)) u(y) |^2 dy.
$$
\end{proof}

One also needs the following technical lemma.

\begin{Lemma} \label{l: Euclidean approx}
Assume $M$ is a compact manifold, possibly with boundary. Let $\varphi^\zeta: \Omega\subset \overline{B_{x_0}(64R)}\to\mathbb{R}^4$ be a smooth family of smooth embeddings, parametrized by $\zeta\in M$. For every $\zeta\in M$ and $x\in B_{x_0}(64R)$, one can define a vector field $\eta^\zeta_x$ on $B_{x_0}(64R)$ as follows. For every $y\in B_{x_0}(64R)$, let 
$$\eta^\zeta_x(y)=[(\varphi^\zeta)_*(y)]^{-1}(\varphi^\zeta(y)-\varphi^\zeta(x)).$$
Then there exists a constant $\Theta>0$, depending on $\varphi$, such that
$$
|\eta^\zeta_x(y)-\eta_x(y)|\le \Theta\cdot  d(x,y)^2.
$$
\end{Lemma}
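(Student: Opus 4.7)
The plan is a straightforward second-order Taylor expansion of $\varphi^\zeta$ along the minimizing geodesic from $x$ to $y$. Fix $\zeta \in M$ and $x, y \in B_{x_0}(64R)$, and set $L := d(x,y)$. Since the injectivity radius at every point of $B_{x_0}(500R)$ exceeds $1000R$, there is a unique unit-speed minimizing geodesic $\gamma:[0,L] \to X$ with $\gamma(0)=x$ and $\gamma(L)=y$. By definition of $\nu_x$ (the gradient of $d(x,\cdot)$), one has $\nu_x(y) = \gamma'(L)$, hence $\eta_x(y) = L\,\gamma'(L)$.

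Applying the one-variable Taylor formula with integral remainder to the smooth curve $\varphi^\zeta \circ \gamma:[0,L] \to \mathbb{R}^4$, expanded at the endpoint $t = L$, yields
$$
\varphi^\zeta(x) \;=\; \varphi^\zeta(y) \;-\; L\,(\varphi^\zeta\circ\gamma)'(L) \;+\; R,
\qquad |R| \le \tfrac{L^2}{2}\sup_{t\in[0,L]}\bigl|(\varphi^\zeta\circ\gamma)''(t)\bigr|.
$$
Since $(\varphi^\zeta\circ\gamma)'(L) = (\varphi^\zeta)_{*}(y)\,\gamma'(L)$, this rearranges to
$$
\varphi^\zeta(y) - \varphi^\zeta(x) \;=\; (\varphi^\zeta)_{*}(y)\,\eta_x(y) \;-\; R.
$$
Applying $[(\varphi^\zeta)_{*}(y)]^{-1}$ to both sides gives the pointwise identity $\eta_x^\zeta(y) - \eta_x(y) = -[(\varphi^\zeta)_{*}(y)]^{-1}R$, and therefore
$$
|\eta_x^\zeta(y) - \eta_x(y)| \;\le\; \bigl\|[(\varphi^\zeta)_{*}(y)]^{-1}\bigr\|\cdot |R|.
$$

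It remains to check that the implicit constants can be chosen uniformly in $(\zeta,x,y)$. Because $\nabla_{\gamma'}\gamma' = 0$, the coordinate acceleration of $\gamma$ in any fixed atlas is controlled by the Christoffel symbols times $|\gamma'|^2 = 1$; consequently $|(\varphi^\zeta\circ\gamma)''(t)|$ is bounded by a quantity depending only on $\|\varphi^\zeta\|_{C^2}$ and on the metric of $X$ on a compact neighborhood. Since each $\varphi^\zeta$ is an embedding, $\|[(\varphi^\zeta)_{*}]^{-1}\|$ is likewise bounded. All of these bounds are uniform in $\zeta \in M$ and in the base point, because $M$ is compact, the family $\{\varphi^\zeta\}_{\zeta\in M}$ is smooth in $\zeta$, and the closure of $B_{x_0}(64R)$ is compact. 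This produces the constant $\Theta$ depending only on $\varphi$, proving the estimate. I do not anticipate a genuine obstacle here: the content of the lemma is essentially Taylor's theorem with the observation that intrinsic radial vector and the chord vector of any smooth chart agree to first order, and the only thing to verify carefully is the uniformity of the remainder over the compact parameter spaces.
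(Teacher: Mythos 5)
Your proof is correct and rests on the same idea as the paper's: $\eta^\zeta_x$ and $\eta_x$ vanish at $x$ and agree to first order there, so their difference is quadratic in $d(x,y)$, with uniformity supplied by compactness of $M\times\overline{B_{x_0}(64R)}$. The paper phrases this as a comparison of the covariant derivatives of the two vector fields at $x$ followed by a compactness appeal, while you make the quadratic remainder explicit via a Taylor expansion of $\varphi^\zeta$ along the minimizing geodesic (using $\eta_x(y)=d(x,y)\,\gamma'(d(x,y))$); both are valid, and yours is, if anything, the more self-contained rendering of the same argument.
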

\begin{proof}
Fix $x$, compute the covariant derivates of $\eta^\zeta_x$ and $\eta_x$ at $x$. Since both vector fields are zero at $x$, their covariant derivatives at $x$ are independent of the connections. Let $e\in T_x X$. Taking derivate in the Euclidean coordinates $\varphi^\zeta$, one obtains $\nabla_e(\eta^\zeta_x)(x)=e$. Taking derivative in the normal coordinates centered at $x$, one obtains $\nabla_e (\eta_x)(x)=e$. Therefore, $\eta^\zeta_x$ and $\eta_x$ have the same derivatives at $x$. Since we are working on compact manifolds, $|\eta^\zeta_x(y)-\eta_x(y)|\le \Theta\cdot  d(x,y)^2$ for some constant $\Theta$ independent of $x$.
\end{proof}

\begin{proof}[Proof of proposition \ref{prop: pinching}]
Assume that $v$ points from $x_1$ towards $x_2$. Extend $v$ to a vector field on $B_x(r)$, such that the coordinates of $v$ are constant under the normal coordinate centered at $x$.
Now apply lemma \ref{l: Euclidean approx}. Let $M=\overline{B_{x_0}(32R)}$. For every $\zeta\in \overline{B_{x_0}(32R)}$, let $\varphi^\zeta$ be the exponential map centered at $\zeta$. Then for every $z\in B_x(r)$,
\begin{equation}\label{i: compare with Euclidean 1}
v(z)=\frac{\eta_{x_1}^x(z)-\eta_{x_2}^x(z)}{|\varphi^x(x_1)-\varphi^x(x_2)|}.
\end{equation}
By lemma \ref{l: Euclidean approx}, 
\begin{equation}\label{i: compare with Euclidean 2}
|\eta_{x_1}^x(z)-\eta_{x_1}(z)|=O(r^2),\quad |\eta_{x_2}^x(z)-\eta_{x_2}(z)|=O(r^2)
\end{equation}
Notice that since $\varphi^x$ is the exponential map centered at $x$,
\begin{equation}\label{i: compare with Euclidean 3}
|\varphi^x(x_1)-\varphi^x(x_2)|=d(x_1,x_2).
\end{equation}
Combine \eqref{i: compare with Euclidean 1}, \eqref{i: compare with Euclidean 2} and \eqref{i: compare with Euclidean 3} together, one obtains
$$
\Big|v(z)-\frac{\eta_{x_1}(z)-\eta_{x_2}(z)}{d(x_1,x_2)}\Big|=O(r^2/d(x_1,x_2)).
$$
Define
$$
\mathcal{E}_l(z)=\nabla_{\eta_{x_l}} u(z) - N_\phi(x_l,d(z,x_l)) u(z) \qquad \text{for } l=1,2.
$$
Then
\begin{align*}
d(x_1,x_2)\nabla_{v} u(z) = & \nabla_{\eta_{x_1}} u(z) - \nabla_{\eta_{x_2}} u(z) + O(r^2 |\nabla u|)\\
=& \underbrace{\big(N_\phi(x_1,d(z,x_1)) - N_\phi(x_2,d(z,x_2))\big)}_{=:\mathcal{E}_3(z)}\,u(z)
\\
& + \mathcal{E}_1(z) - \mathcal{E}_2(z)+ O(r^2 |\nabla u|).
\end{align*}

To simplify notations, define the measure
$$
d\mu_x = -d(x,y)^{-1}\phi'\Big(\frac{d(x,y)}{r}\Big) \, dy.
$$

Using \eqref{eqn: v der of N}, one can write
\begin{align*}
&d(x_1,x_2)\cdot\partial_v N_\phi(x,r) 
\\=& 
O(r^2) +\frac{2}{H_\phi(x,r)}\int \nabla_{\eta_x} u(y) \cdot (\mathcal{E}_1- \mathcal{E}_2 + \mathcal{E}_3 u+ O(r^2|\nabla u|))d\mu_x 
\\&
-\frac{2}{H_\phi(x,r)} \int uN_\phi(x,r)\cdot (\mathcal{E}_1- \mathcal{E}_2 + \mathcal{E}_3 u + O(r^2|\nabla u|))d\mu_x 
\\ = &
\underbrace{\frac{2}{H_\phi(x,r)}\int \nabla_{\eta_x} u(y) \cdot (\mathcal{E}_1- \mathcal{E}_2 )d\mu_x}_{=:(A)} -\underbrace{\frac{2N_\phi(x,r)}{H_\phi(x,r)} \int u\cdot (\mathcal{E}_1- \mathcal{E}_2  )d\mu_x}_{=:(B)}
\\ &
+\underbrace{\frac{2}{H_\phi(x,r)}  \int \mathcal{E}_3 u(\nabla_{\eta_x} u -  N_\phi(x,r) u) \, d\mu_x}_{=:(C)} +O(r^2)
\end{align*}

To bound $(C)$, notice that
\begin{align*}
\mathcal{E}_3(z) &=
\underbrace{N_\phi(x_1, r) - N_\phi(x_2, r)}_{=:\mathcal{E}} +
\underbrace{[N_\phi(x_1, d(z,x_1)) - N_\phi(x_1, r) ] }_{=:\mathcal{E}_4(z)}
\\ 
&- \underbrace{[N_\phi(x_2, d(z,x_2)) -N_\phi(x_2, r)]}_{=:\mathcal{E}_5(z)}. 
\end{align*}
By \eqref{eqn: int by parts 1},
\begin{align*}
\int u\cdot \nabla_{\eta_x} u \,d\mu_x & = rD_\phi(x,r)+O(r^2H_\phi(x,r))
\\
& = N_\phi(x,r)H_\phi(x,r)+O(r^2H_\phi(x,r))
\\
& = N_\phi(x,r)\int |u|^2\,d\mu_x+O(r^2H_\phi(x,r)).
\end{align*}
Hence
$$
\int u\cdot (\nabla_{\eta_x} u -  N_\phi(x,r) u) d\mu_x
\\= O(r^2H_\phi(x,r)),
$$
therefore
$$
\int \mathcal{E}u\cdot (\nabla_{\eta_x} u -  N_\phi(x,r) u) d\mu_x
\\= O(r^2H_\phi(x,r)).
$$
By lemma \ref{lem: bound |u||nabla u|},
$$
2 \int  |u|(|\nabla_{\eta_x} u| +  |N_\phi(x,r)| |u|) \, d\mu_x = O(H_\phi(x,r)).
$$
In addition, notice that
$$
\sup_{z\in \text{ supp }\mu_x }|\mathcal{E}_4(z)|+|\mathcal{E}_5(z)| \le W^{4r}_{r/4}(x_1) + W^{4r}_{r/4}(x_2) + C_1 r^2. 
$$
Therefore,
\begin{multline*}
\int (|\mathcal{E}_4|+|\mathcal{E}_5|) \cdot \big|u(\nabla_{\eta_x} u -  N_\phi(x,r) u)\big| \, d\mu_x
\\
\le C_2H_\phi(x,r)(W^{4r}_{r/4}(x_1) + W^{4r}_{r/4}(x_2) + C_1 r^2).
\end{multline*}
As a result,
$$
(C) \le C_3(W^{4r}_{r/4}(x_1) + W^{4r}_{r/4}(x_2)+C_4r^2).
$$

To bound $(A)$, use Cauchy's inequality to obtain
\begin{align*}
(A) \le & \frac{C_5}{H_\phi(x,r)} \Big(\int_{B_x(r)} |\nabla u|^2 dy \Big)^{1/2}  \Big(\int_{B_x(r)-B_x(3r/4)} \big(\mathcal{E}_1^2+ \mathcal{E}_2^2\big)dy \Big)^{1/2}
\\
\le & \frac{C_6}{r^{1/2}} \Big(\int_{B_x(r)-B_x(3r/4)} \big(\mathcal{E}_1^2+ \mathcal{E}_2^2\big)dy \Big)^{1/2}.
\end{align*}
Now apply lemma \ref{lem: pinching with fixed center}, 
\begin{align*}
\int_{B_x(r)-B_x(3r/4)}\mathcal{E}_1^2\,dy  & \le 
\int_{B_{x_1}(5r/4)-B_{x_1}(r/2)}\mathcal{E}_1^2\,dy 
\\
& \le C_7rH_\phi(x_1,r)(W_{r/4}^{4r}(x_1)+C_7 r^2)
\end{align*}
A similar estimate works for the integral of $\mathcal{E}_2$. Therefore
$$
(A) \le C_8\Big[ \sqrt{|W^{4r}_{r/4}(x_1)|} + \sqrt{|W^{4r}_{r/4}(x_2)|} + r \Big].
$$
Similarly, applying Cauchy's inequality on $(B)$ leads to
\begin{align*}
(B) \le & \frac{C_9}{rH_\phi(x,r)} \Big(\int_{B_x(r)} |u|^2 dy \Big)^{1/2}\Big(\int_{B_x(r)-B_x(3r/4)} \big(\mathcal{E}_1^2+ \mathcal{E}_2^2\big)dy \Big)^{1/2}
\\
\le & \frac{C_{10}}{r^{1/2}} \Big(\int_{B_x(r)-B_x(3r/4)} \big(\mathcal{E}_1^2+ \mathcal{E}_2^2\big)dy \Big)^{1/2}
\end{align*}
Lemma \ref{lem: pinching with fixed center} then gives 
$$
(B) \le C_{11}\Big[ \sqrt{|W^{4r}_{r/4}(x_1)|} + \sqrt{|W^{4r}_{r/4}(x_2)|} + r \Big],
$$
and the proposition is proved.
\end{proof}

\begin{Corollary} \label{c: pointwise pinching}
Assume $x_1, x_2\in B_{x_0}(32R)$, assume $r\in (0,8R]$. If $d(x_1,x_2)\le r/4$, then 
$$
|N_\phi(x_1,r)-N_\phi(x_2,r)| \le C\Big[ \sqrt{|W^{4r}_{r/4}(x_1)|} + \sqrt{|W^{4r}_{r/4}(x_2)|} + r \Big].
$$
\end{Corollary}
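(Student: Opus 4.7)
The plan is to obtain the corollary by integrating the directional derivative bound of Proposition \ref{prop: pinching} along the short geodesic connecting $x_1$ and $x_2$. Since $d(x_1, x_2) \le r/4 \le 2R$ and $x_1, x_2 \in B_{x_0}(32R)$, the injectivity radius hypothesis guarantees a unique minimizing geodesic $\gamma \colon [0, L] \to X$ parametrized by arc length, with $L = d(x_1, x_2)$, $\gamma(0) = x_1$, $\gamma(L) = x_2$, and every point $\gamma(t)$ lies in $B_{x_0}(32R)$.

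First I would verify that the hypotheses of Proposition \ref{prop: pinching} are satisfied at every $\gamma(t)$: indeed $r \in (0, 8R]$ is given, $\gamma(t) \in B_{x_0}(32R)$, and $d(x_1, x_2) \le r/4$ is the standing assumption. Setting $v(t) = \gamma'(t)$ (a unit tangent vector), Proposition \ref{prop: pinching} applied at $x = \gamma(t)$ with tangent direction $v(t)$ gives
\begin{equation*}
d(x_1, x_2) \cdot \bigl|\partial_{v(t)} N_\phi(\gamma(t), r)\bigr| \le C\Bigl[\sqrt{|W^{4r}_{r/4}(x_1)|} + \sqrt{|W^{4r}_{r/4}(x_2)|} + r\Bigr],
\end{equation*}
with a constant $C$ that is uniform in $t$ because the right-hand side depends only on the endpoints $x_1$, $x_2$ and on $r$.

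Then I would apply the fundamental theorem of calculus along $\gamma$: since $N_\phi(\,\cdot\,, r)$ is smooth in $x$ (proposition \ref{prop: int by parts}),
\begin{equation*}
N_\phi(x_2, r) - N_\phi(x_1, r) = \int_0^{L} \partial_{v(t)} N_\phi(\gamma(t), r)\, dt.
\end{equation*}
Taking absolute values, bounding the integrand by the estimate above, and using $L = d(x_1, x_2)$ to cancel the $d(x_1, x_2)$ factor, one obtains
\begin{equation*}
|N_\phi(x_1, r) - N_\phi(x_2, r)| \le \int_0^L \frac{C}{d(x_1,x_2)}\Bigl[\sqrt{|W^{4r}_{r/4}(x_1)|} + \sqrt{|W^{4r}_{r/4}(x_2)|} + r\Bigr]\, dt,
\end{equation*}
which simplifies to the desired inequality.

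There is essentially no obstacle here: Proposition \ref{prop: pinching} is engineered precisely so that the division by $d(x_1, x_2)$ is compensated by the length of the path of integration, making the estimate a direct corollary. The only minor point to check is that all intermediate points on $\gamma$ stay in the domain where the proposition applies, which follows from a one-line triangle inequality.
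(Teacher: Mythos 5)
Your proof is correct and is exactly the argument the paper intends: the corollary is stated with a bare \qed precisely because one integrates the bound of Proposition \ref{prop: pinching} along the short geodesic and cancels the factor $d(x_1,x_2)$ against the length of the path. Nothing is missing.
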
 \qed

\section{$L^2$ approximation by planes}\label{s: flatness}

This section establishes a distortion bound in the spirit of \cite{naber2015rectifiable}.
Assume $U$ satisfies assumption \ref{assumption} with respect to $\epsilon>0$. In this section, the constants $C$, $C_1$, $C_2$, $\cdots$ will denote constants that depend on $\Lambda$, $R$, the $C^1$ norms of the curvatures, as well as $\epsilon$.  The techniques in this section were developed by \cite{naber2015rectifiable}, and the presentation here is adapted from section 5 of \cite{rec}.

\begin{Definition}
Suppose $\mu$ is a Radon measure on $\mathbb{R}^4$. For $x\in\mathbb{R}^4$, $r>0$, define
$$
D_\mu^2(x,r)= \inf_L r^{-4} \int_{B_x(r)} \text{dist}(y,L)^2 \,d\mu(y),
$$
where $L$ is taken among the set of $2$-dimensional affine subspaces.
\end{Definition}

For a measure $\mu$ supported in $Z$, we wish to bound the value of $D_\mu^2(x,r)$ in terms of the frequency functions. However, we have to be careful, since $X$ is a Riemannian manifold, but $D_\mu^2(x,r)$ is only defined for Euclidean spaces. We identify $B_{x_0}(32R)$ with $\bar B(32R)$ using the exponential map centered at $x_0$. From now on, we will work on the Euclidean space using this identification.

The main result of this section is the following
\begin{Proposition} \label{p: estimate of D}
There exists a positive constant $R_0\le R$ and a constant $C$ with the following property. Let $\mu$ be a Radon measure supported in $Z$. For $x\in  \bar B(R)$ and $r\le R_0$, one has
$$
D_\mu^2(x,r/8)\le \frac{C}{r^2} \int_{\bar B_x(r/8)} (W^{4r}_{r/4}(z) +C r^2)d\mu(z).
$$
\end{Proposition}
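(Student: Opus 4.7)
The plan is to follow the Naber-Valtorta $L^2$ best-plane framework from \cite{naber2015rectifiable}, in the form adapted to a frequency-function PDE setting in \cite[Section 5]{rec}. First reduce the proposition to a spectral statement. Assuming $\mu(\bar B_x(r/8))>0$ (otherwise the estimate is vacuous), let $\bar x$ denote the $\mu$-center of mass of $\bar B_x(r/8)$ and define the symmetric bilinear form
$$
b(v,w)=\int_{\bar B_x(r/8)}\langle y-\bar x, v\rangle\,\langle y-\bar x, w\rangle\,d\mu(y),
$$
with eigenvalues $\lambda_1\le\lambda_2\le\lambda_3\le\lambda_4$ and orthonormal eigenvectors $v_1,\dots,v_4$. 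A standard variational argument shows that the optimal affine $2$-plane is $L=\bar x+\text{span}(v_3,v_4)$ and gives $(r/8)^4 D_\mu^2(x,r/8)=\lambda_1+\lambda_2$. Thus the proposition is equivalent to the eigenvalue estimate
$$
\lambda_1+\lambda_2\le C r^2\int_{\bar B_x(r/8)}\bigl(W_{r/4}^{4r}(z)+Cr^2\bigr)\,d\mu(z).
$$

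To prove this eigenvalue bound, I would fix $i\in\{1,2\}$, write $\lambda_i=b(v_i,v_i)=\int\langle y-\bar x,v_i\rangle^2\,d\mu(y)$, and apply the Naber-Valtorta Cauchy-Schwarz trick. The key input is Lemma \ref{lem: pinching with fixed center}, which controls $\|\nabla_{\eta_{\bar x}}u-N_\phi(\bar x,d(\bar x,\cdot))u\|_{L^2(\text{annulus})}^2$ by $CrH_\phi(\bar x,r)\bigl(W_{r/4}^{4r}(\bar x)+Cr^2\bigr)$; this quantity is naturally paired with directional derivatives through Corollary \ref{cor: der of N}. Specifically, using the identity for $\partial_{v_i}N_\phi$ from Corollary \ref{cor: der of N} evaluated at moving centers $z\in\bar B_x(r/8)\cap\supp\mu$, and then applying Cauchy-Schwarz in the direction $v_i$, one extracts a factor of $b(v_i,v_i)^{1/2}$ on one side and the square root of the $\mu$-integrated frequency drop on the other. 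Squaring and summing over $i=1,2$ then delivers the claimed bound.

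The main obstacle will be the bookkeeping: the first-variation identities in Proposition \ref{prop: int by parts} and Corollary \ref{cor: der of N} are anchored at a single center, while the eigenvalue estimate mixes integrals over $z\in\supp\mu$ with kernels centered at $\bar x$. Handling this requires systematically comparing quantities at $z$ and $\bar x$ using Proposition \ref{prop: pinching} and Corollary \ref{c: pointwise pinching}, and replacing the geodesic radial field $\eta_z$ by its Euclidean analogue via Lemma \ref{l: Euclidean approx}; the resulting cross-terms are quadratic in $r$ and in $d(\bar x,z)\le r/4$. Choosing $R_0\le R$ small enough that inequality \eqref{i: close to euclidean}, the bounded oscillation estimate \eqref{i: bounded oscillation of H}, and all curvature error terms are uniformly valid on $\bar B_{x_0}(R_0)$ and absorbed by the explicit additive $Cr^2$ term on the right-hand side then completes the argument.
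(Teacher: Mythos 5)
Your reduction to the eigenvalue estimate is exactly the paper's first step: the center of mass $\bar z$, the bilinear form $b$, the identity $(r/8)^4D_\mu^2(x,r/8)=\lambda_1+\lambda_2$, and the Cauchy--Schwarz trick applied to the identity $-\lambda_i\, v_i\cdot\grad u(y)=\int\big((z-\bar z)\cdot v_i\big)\big((y-z)\cdot\grad u(y)-\alpha u(y)\big)\,d\mu(z)$ are all there, as is the use of Lemma \ref{lem: pinching with fixed center}, Lemma \ref{l: Euclidean approx} and Corollary \ref{c: pointwise pinching} to control the error terms with a well-chosen constant $\alpha=N_\phi(p,r)$.

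However, there is a genuine gap in how you propose to close the argument. The Cauchy--Schwarz step only yields, for each $i$ with $\lambda_i\neq 0$,
$$
\lambda_i\,|v_i\cdot\grad u(y)|^2\ \le\ \int_{\bar B_x(r/8)}\big|(y-z)\cdot\grad u(y)-\alpha u(y)\big|^2\,d\mu(z),
$$
and to extract a bound on $\lambda_i$ itself you must divide by a \emph{lower} bound for the directional derivative term integrated over an annulus. You say ``fix $i\in\{1,2\}$ \dots\ squaring and summing over $i=1,2$ then delivers the claimed bound,'' but no such lower bound exists for one or two fixed directions: a $\mathbb{Z}/2$ harmonic spinor can be (and, in the homogeneous model, is) invariant in two independent directions tangent to its two-dimensional zero locus, so $\int_{\text{annulus}}\sum_{i=1}^2|v_i\cdot\grad u|^2$ can degenerate. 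The paper instead uses $\lambda_1+\lambda_2\le 2\lambda_i$ for $i=2,3,4$ and sums over the \emph{top three} eigendirections, precisely so that it can invoke the Poincar\'e-type inequality of Lemma \ref{l: lower bound on der at d-1 directions}: for \emph{any} three orthonormal vectors, $\int_{\bar B_x(5r/4)-\bar B_x(3r/4)}\sum_{j=1}^3|\nabla_{v_j}u|^2\ge H_\phi(x,r)/(Cr)$ whenever $Z$ meets $\bar B_x(r/8)$. That lemma is proved by a compactness-and-contradiction argument resting on Corollary \ref{cor: compactness for affine case}, unique continuation, and Taubes's dimension bound (Theorem \ref{thm: Taubes}); it is the essential ingredient your proposal omits, and without it (or a substitute) the Cauchy--Schwarz inequality cannot be converted into the stated bound on $\lambda_1+\lambda_2$. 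A smaller inaccuracy: the identities actually used here are the eigenvector relation \eqref{e: eigenvalue} and \eqref{eqn: int by parts 1}, not the formula for $\partial_{v}N_\phi$ from Corollary \ref{cor: der of N}, which belongs to the proof of Proposition \ref{prop: pinching}.
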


First, observe that the function $D_\mu^2(x,r)$ has the following geometric interpretation. Assume $\mu(\bar B_x(r))>0$, let
$$
\bar z = \frac{1}{\mu(\bar B_x(r))} \int_{\bar B_r(x)} z \, d\mu(z),
$$
Define a non-negative bilinear form $b$ on $\mathbb{R}^4$ as
$$
b(v,w)= \int_{\bar B_x(r)} \big((z-\bar z)\cdot v\big) \big((z-\bar z)\cdot w\big) \,d\mu(z).
$$
Let $0\le \lambda_1\le\cdots \le \lambda_4$ be the eigenvalues of $b$, then
$$
D_\mu^2(x,r)=r^{-4}(\lambda_1+ \lambda_2).
$$
Let $v_i$ be an eigenvector with eigenvalue $\lambda_i$, a straightforward argument of linear algebra shows that
\begin{equation}\label{e: eigenvalue}
\int_{B_x(r)} \big((z-\bar z)\cdot v_i\big) z \,d\mu(z)=\lambda_i\, v_i.
\end{equation}

The following lemma can be understood as a version of Poincar\'e inequality for $\mathbb{Z}/2$ harmonic spinors.
\begin{Lemma} \label{l: lower bound on der at d-1 directions}
There exist constants $C, R_0>0$ with the following property. Let $v_1,v_2,v_3$ be orthonormal vectors in $\mathbb{R}^4$. Let $x\in\bar B(R)$, $r\le R_0$. Assume $Z\cap \bar B_x(r/8)\neq \emptyset$, then
$$
\int_{\bar B_x(5r/4)-\bar B_x(3r/4)} \sum_{j=1}^{3} |\nabla_{v_j} u(z)|^2 \, dz \ge \frac{H_\phi(x,r)}{Cr}.
$$
\end{Lemma}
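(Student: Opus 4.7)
The plan is to argue by contradiction via a blow-up, in the spirit of \cite{naber2015rectifiable} and section~5 of \cite{rec}. Suppose no such pair $(C, R_0)$ works. Then for each $n$ I obtain $x_n \in \bar B(R)$, $r_n \leq 1/n$, orthonormal $v_1^{(n)}, v_2^{(n)}, v_3^{(n)} \in \mathbb{R}^4$, and a zero $p_n \in Z \cap \bar B_{x_n}(r_n/8)$ violating the claimed inequality with constant $n$. I rescale the ball $\bar B_{x_n}(5r_n)$ to $\bar B(5)$ using normal coordinates at $x_n$ dilated by $1/r_n$, and multiply the rescaled spinor by $(r_n^{-3} H_\phi(x_n, r_n))^{-1/2}$ so that the new smoothed height satisfies $\tilde H_\phi(0, 1) = 1$. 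Under this rescaling the failure of the inequality becomes
\[
\int_{\bar B(5/4) - \bar B(3/4)} \sum_{j=1}^3 \bigl| \nabla_{v_j^{(n)}} \tilde u_n \bigr|^2 \, d\tilde z \;<\; \frac{1}{n}.
\]
Passing to a subsequence with $x_n \to x^*$, the rescaled metrics $\tilde g_n$ and Clifford structures $\tilde \rho_n$ converge in $C^\infty$ to constants $g_*$ and $\rho_*$, while the rescaled connection forms $\tilde A_n$ converge to zero (they pick up an explicit factor of $r_n$ under the dilation).

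By scale invariance of $N_\phi$ (up to the $O(r_n^2)$ correction of Corollary~\ref{cor: der of N}) and preservation of Assumption~\ref{assumption}, Corollary~\ref{cor: compactness for affine case} applies, and a further subsequence $\tilde u_n$ converges strongly in $W^{1,2}(\bar B(2))$ and uniformly to a $\mathbb{Z}/2$ harmonic spinor $u_\infty$ on $\bar B(2)$ with constant-coefficient data --- the flat case of Remark~\ref{r: flat case}. Uniform convergence promotes $p_n$ to a zero $p \in Z_{u_\infty} \cap \bar B(1/8)$, and the normalization gives $H_\phi^{(u_\infty)}(0, 1) = 1$. Extracting once more so that $v_j^{(n)} \to v_j$, strong $W^{1,2}$ convergence together with the vanishing of the annular integrals forces $\nabla_{v_j} u_\infty = 0$ on the annulus $\bar B(5/4) \setminus \bar B(3/4)$ for $j=1,2,3$. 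Choosing a fourth direction $v_4$ that is $g_*$-orthogonal to $v_1, v_2, v_3$, the Dirac equation collapses to $\rho_*(v_4) \nabla_{v_4} u_\infty = 0$, and the invertibility of $\rho_*(v_4)$ (since $\rho_*(v_4)^2 = -|v_4|_{g_*}^2\,\mathrm{id}$) yields $\nabla u_\infty \equiv 0$ on the annulus.

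Since $D u_\infty = 0$ has constant coefficients in the limit, the Clifford identities give $D^2 = -\Delta$, so each component of $u_\infty$ is harmonic --- hence real-analytic --- on $\bar B(2) \setminus Z_{u_\infty}$. Because $\nabla u_\infty$ is real-analytic and vanishes on the open annulus, it vanishes on the entire connected open set $\bar B(2) \setminus Z_{u_\infty}$; connectedness is ensured by the codimension-$2$ bound of Theorem~\ref{thm: Taubes}. Thus $|u_\infty|$ is a constant $c \geq 0$ on $\bar B(2) \setminus Z_{u_\infty}$. If $c = 0$, then $|u_\infty| \equiv 0$ on the annulus, forcing $H_\phi^{(u_\infty)}(0, 1) = 0$ and contradicting the normalization. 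If $c > 0$, then $|U_\infty| = c\sqrt{2}$ on the dense open set $\bar B(2) \setminus Z_{u_\infty}$, hence everywhere on $\bar B(2)$ by continuity, contradicting $|U_\infty(p)| = 0$ at the zero $p \in \bar B(1/8)$. Either way we reach the desired contradiction.

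The main technical care I anticipate is in implementing the blow-up cleanly: verifying that all four hypotheses of Corollary~\ref{cor: compactness for affine case} survive the rescaling and normalization (in particular the frequency bound $\Lambda$ and Assumption~\ref{assumption}, possibly with a slightly smaller $\epsilon$), and confirming that the natural rescaling genuinely sends the connection forms to zero, so that one lands in the flat case of Remark~\ref{r: flat case} where real-analyticity of components of $u_\infty$ on $\bar B(2) \setminus Z_{u_\infty}$ is automatic. The rest of the argument is standard unique continuation combined with the continuity of the $2$-valued section $U_\infty$.
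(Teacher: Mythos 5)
Your proposal is correct and follows essentially the same route as the paper: contradiction, blow-up of $\bar B_{x_n}(O(r_n))$ with normalization $\tilde H_\phi(0,1)=1$, Corollary \ref{cor: compactness for affine case} to extract a translation-invariant limit $U^*$ with a zero in $\bar B(1/8)$ and vanishing annular derivatives, then unique continuation plus the codimension-$2$ bound of Theorem \ref{thm: Taubes} to force a contradiction. Your closing argument (killing $\nabla_{v_4}u_\infty$ via the Dirac equation and propagating $\nabla u_\infty\equiv 0$ by real-analyticity on the connected complement of $Z$) is just a more explicit rendering of the paper's terse final step, and the scaling bookkeeping you give is consistent.
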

\begin{proof}
Assume such constants do not exist. Then there exists a sequence 
$$\{(x_n,r_n,U_n,v_1^{(n)},v_2^{(n)},v_3^{(n)})\}_{n\ge 1},$$ 
such that $r_n\le \frac{1}{n}$, the vectors $v_1^{(n)},v_2^{(n)},v_3^{(n)}$ are orthonormal in $\mathbb{R}^4$,
\begin{equation}\label{e: u is small in 3 directions}
\int_{\bar B_{x_n}(5{r_n}/4)-\bar B_{x_n}(3r_n/4)} \sum_{j=1}^{3} |\nabla_{v_j^{(n)}} u(z)|^2 \, dz \le \frac{H_\phi(x_n,r_n)}{nr_n},
\end{equation}
and $Z\cap \bar B_{x_n}(r_n/8)\neq \emptyset$.

Let $\sigma=(12/11)^2$. Rescale the ball $\bar B_{x_n}(5\sigma^2 r_n)$ to $\bar B(5\sigma^2)$, and normalize the restriction of $U$. By assumption \eqref{i: close to euclidean}, the pull back metrics $g_n$ are given by matrix-valued functions on $\bar B(5\sigma^2)$ with eigenvalues bounded by $1/\sigma^2$ and $\sigma^2$. There is a subsequence of the pull backs of $(g_n,A_n,\rho_n,v_1^{(n)},v_2^{(n)},v_3^{(n)})$ that converges to some data set $(g,A,\rho,v_1,v_2,v_3)$ in $C^\infty$, and since $r_n\to 0$, the limit data set $(g,A,\rho)$ is invariant under translations. By corollary \ref{cor: compactness for affine case}, after taking a subsequence, the rescaled $U_n$ converges to a $\mathbb{Z}/2$ harmonic spinor $U^*$ on $\bar B(2)$ with respect to $(g,A,\rho)$, which satisfies assumption \ref{assumption}.

The assumption that $Z\cap\bar B_{x_n}(r_n/8)\neq \emptyset$ implies that $U^*$ has at least one zero point in $\bar B(1/8)$. Inequality \eqref{e: u is small in 3 directions} gives 
$$
\int_{\bar B(5/4)-\bar B(3/4)} \sum_{j=1}^{3} |\nabla_{v_j} u^*(z)|^2 \, dz =0
$$
Theorem \ref{thm: Taubes} implies that $U^*$ is not identically zero on $\bar B(5/4)-\bar B(3/4)$.
Since $U^*$ solves the Dirac equation on non-zero points, the unique continuation property implies that $|U|$ is constant in $3$ linearly independent directions in $\bar B(5/4)-\bar B(3/4)$, hence theorem \ref{thm: Taubes} implies that $U$ is everywhere non-zero in $\bar B(5/4)$, and that is a contradiction.
\end{proof}

Now one can give the proof of proposition \ref{p: estimate of D}. The proof is adapted from the proof of proposition 5.3 in \cite{rec}.  

\begin{proof}[Proof of proposition \ref{p: estimate of D}]
Let $R_0$ be given by lemma \ref{l: lower bound on der at d-1 directions}, and assume $r\le R_0$. Without loss of generality, assume that $D_\mu^2(x,r/8)>0$. In particular, $\mu(\bar B_x(r/8))>0$, thus $Z\cap \bar B_{x}(r/8) \neq \emptyset$. Let $$
\bar z =\frac{1}{\mu(\bar B_x(r/8))}\int_{\bar B_x(r/8)} z d\mu(z).
$$
Let $0\le\lambda_1\le\cdots\le \lambda_4$ be the corresponding eigenvalues, then $D_\mu^2(x,r/8)>0$ implies $\lambda_2>0$. Let $v_i$ be the unit eigenvector with eigenvalue $\lambda_i$. Let $\grad \, u(z)$ be the vector in $T_z\mathbb{R}^4\otimes \mathcal{V}$, such that for every $v\in T_z\mathbb{R}^4$,
$$
\langle v, \text{grad}\, u(z)\rangle_{\mathbb{R}^4} = 
\nabla_v u(z).
$$
By \eqref{i: close to euclidean}, $\|\grad \,u(z)\|_{\mathbb{R}^4} \le (\frac{12}{11})\|\nabla u\|_X$.
Equation \eqref{e: eigenvalue} gives
$$
-\lambda_i v_i\cdot  \grad\, u(y) = \int_{\bar B_{x}(r/8)} \big((z-\bar z)\cdot v_i\big) \big( (y-z)\cdot \grad\, u(y) - \alpha u(y)\big) d\mu(z)
$$
for any constant $\alpha$.
By Cauchy's inequality
\begin{align*}
&\lambda_i^2 |v_i\cdot\grad\, u(y)|^2  
\\ 
\le &
\int_{\bar B_{x}(r/8)} \big|(z-\bar z)\cdot v_i\big|^2d\mu(z) \int_{\bar B_{x}(r/8)} \big| (y-z)\cdot \grad\, u(y) - \alpha u(y)\big|^2d\mu(z) 
\\
= &
\lambda_i \int_{\bar B_{x}(r/8)} \big| (y-z)\cdot \grad\, u(y) - \alpha u(y)\big|^2d\mu(y) 
\end{align*}
Therefore, when $\lambda_i\neq 0$,
$$
\lambda_i|v_i\cdot\grad\, u(y)|^2 \le \int_{\bar B_{x}(r/8)} \big| (y-z)\cdot \grad\, u(y) - \alpha u(y)\big|^2d\mu(z).
$$
Integrate with respect to $y$ on $\bar B_x(5r/4)-\bar B_x(3r/4)$, and sum up $i=2,3,4$,
\begin{align}
&\int_{\bar B_x(5r/4)-\bar B_x(3r/4)}\sum_{i=2}^4\lambda_i|v_i\cdot\grad\, u(y)|^2 \, dy 
\nonumber \\ \le &
3\int_{y\in \bar B_x(5r/4)-\bar B_x(3r/4)}\int_{z\in\bar B_{x}(r/8)} \big| (y-z)\cdot \grad\, u(y) - \alpha u(y)\big|^2d\mu(z)dy
\nonumber \\ \le &
3\int_{z\in\bar B_{x}(r/8)} \int_{y\in \bar B_z(11r/8)-\bar B_z(5r/8)}\big| (y-z)\cdot \grad\, u(y) - \alpha u(y)\big|^2\,dyd\mu(z). \label{e: bound eigenbalue by N}
\end{align}
On the other hand,
\begin{align*}
 r^2D_\mu^2(x,r) \sum_{i=2}^4|v_i\cdot\grad\, u(y)|^2 
 = &
r^{-2}(\lambda_1+ \lambda_2) \sum_{i=2}^4|v_i\cdot\grad\, u(y)|^2 
\\ \le &
\frac{2}{r^2}\sum_{i=2}^4\lambda_i|v_i\cdot\grad\, u(y)|^2 
\end{align*}
Therefore 
\begin{multline*}
 r^2D_\mu^2(x,r) \int_{\bar B_x(5r/4)-\bar B_x(3r/4)}\sum_{i=2}^4|v_i\cdot\grad\, u(y)|^2 \,dy
\\ 
\le \frac{2}{r^2}\int_{\bar B_x(5r/4)-\bar B_x(3r/4)}
\sum_{i=2}^4\lambda_i|v_i\cdot\grad\, u(y)|^2 \, dy 
\end{multline*}
By lemma \ref{l: lower bound on der at d-1 directions}, this implies
$$ 
r^2 H_\phi(x,r) D_\mu^2(x,r) \le \frac{C_1}{r}\int_{\bar B_x(5r/4)-\bar B_x(3r/4)}
\sum_{i=2}^4\lambda_i|v_i\cdot\grad\, u(y)|^2 \, dy 
$$
Therefore inequality \eqref{e: bound eigenbalue by N} gives
\begin{samepage}
\begin{multline} \label{i: bound D_mu by N}
r^2 H_\phi(x,r) D_\mu^2(x,r) 
\\
\le \frac{3C_1}{r}\int_{\bar B_{x}(r/8)} 
\underbrace{\int_{\bar B_z(11r/8)-\bar B_z(5r/8)}\big| (y-z)\cdot \grad\, u(y) - \alpha u(y)\big|^2\,dy}_{=:A(z,r)} d\mu(z).
\end{multline}
\end{samepage}
where the constant $C_1$ is independent of $\alpha$.

Notice that
\begin{multline*}
A(z,r)\le 3 \Big( 
\underbrace{\int_{\bar B_z(11r/8)-\bar B_z(5r/8)}\big| \eta_z(y)\cdot \grad\, u(y) - N_\phi(z,d(z,y)) u(y)\big|^2\,dy }_{=:A_1(z,r)}
\\
+\underbrace{\int_{\bar B_z(11r/8)-\bar B_z(5r/8)} |(y-z)-\eta_z(y)|^2|\grad\, u(y)|^2 dy}_{=:A_2(z,r)}
\\
+ \underbrace{\int_{\bar B_z(11r/8)-\bar B_z(5r/8)} \big(N_\phi(z,d(z,y))-\alpha\big)^2 |u(y)|^2dy }_{=:A_3(z,r)}\Big)
\end{multline*}
Notice that by \eqref{i: close to euclidean}, we have
$\bar B_z(11r/8)-\bar B_z(5r/8) \subset B_z(3r/2)-B_z(r/2)$.
Therefore, by lemma \ref{lem: pinching with fixed center},
$$A_1(z,r)\le C_2rH_\phi(z,r)(W_{r/4}^r(z)+C_2r^2).$$

By lemma \ref{l: Euclidean approx} and lemma \ref{lem: bound |u||nabla u|}, 
$$
A_2(z,r)=O(r^4\int_{B_z(3r/2)}|\nabla u|^2)=O(r^3H_\phi(x,r)).
$$
To bound $A_3(z,r)$, first break it into two parts
\begin{multline*}
A_3(z,r) \le C_3\underbrace{\int_{B_z(3r/2)-B_z(r/2)}\big(N_\phi(z,d(z,y))-N_\phi(z,r)\big)^2 |u(y)|^2dy}_{=:A_4(z,r)} \\
+ C_4\underbrace{\int_{B_z(3r/8)-B_z(r/2)} \big(N_\phi(z,r)-\alpha\big)^2 |u(y)|^2dy}_{=:A_5(z,r)}
\end{multline*}
Here the balls $B_z(3r/2)$ and $B_z(r/2)$ are the geodesic balls on $X$, and the measure $dy$ is the volume form of $X$.
The monotonicity of $N_\phi$ implies that
\begin{align*}
A_4(z,r) \le & (W_{r/4}^{4r}(z)+C_5 r^2)\int_{B_z(3r/2)}|u(y)|^2dy
\\ \le & 
C_6rH_\phi(x,r)(W_{r/4}^{4r}(z)+C_5 r^2).
\end{align*}
Now take $p\in B_x(r/8)$, such that
$$
|W_{r/4}^{4r}(p)|=\inf_{q\in B_x(r/8)} |W_{r/4}^{4r}(q)|,
$$
and take $\alpha=N_\phi(p,r)$. Then by lemma \ref{c: pointwise pinching}, for $z\in B_x(r/8)$,
\begin{align*}
A_5(z,r)\le & \int_{B_z(3r/2)-B_z(r/2)} \big(C_7(\sqrt{|W_{r/4}^{4r}(z)|}+\sqrt{|W_{r/4}^{4r}(p)|}+r)\big)^2 |u(y)|^2dy
\\ \le &
C_8 \big(W_{r/4}^{4r}(z)+C_8 r^2\big) \int_{B_z(3r/2)-B_z(r/2)}|u(y)|^2dy
\\ \le &
C_9 rH_\phi(x,r)\big(W_{r/4}^{4r}(z)+C_8 r^2\big) 
\end{align*}
In conclusion, 
$$
A(z,r)\le C_{10}rH_\phi(x,r)\big(W_{r/4}^{4r}(z)+C_{11} r^2\big).
$$
Therefore proposition \ref{p: estimate of D} follows from inequality \eqref{i: bound D_mu by N}.
\end{proof}

\section{Approximate spines}
\begin{Definition}
Given a set of points $\{p_i\}_{i=0}^k\subset \mathbb{R}^4$ and a number $\beta>0$, one says that $\{p_i\}_{i=0}^k$ is $\beta$-linearly independent, if for every $j\in\{0,1,\cdots,k\}$, the distance between $p_j$ and the affine subspace spanned by $\{p_i\}_{i=0}^k\backslash\{p_j\}$ is at least $\beta$.

Given a set $F\subset \mathbb{R}^4$, one says that $F$ $\beta$-spans a $k$-dimsensional affine subspace, if there exit $(k+1)$ points in $F$ that are $\beta$-linearly independent.
\end{Definition}

\begin{Lemma} \label{lem: spine}
If $F$ is a bounded set that does not $\beta$-span a $k$-dimensional affine space, then there exists a $(k-1)$-dimensional affine space $V$, such that $F$ is contained in the $2\beta$-neighborhood of $V$.
\end{Lemma}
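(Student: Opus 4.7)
I plan to select $\{p_0,\ldots,p_j\}\subset F$ to be a $\beta$-linearly independent subset of maximum cardinality $j+1$, breaking ties by maximizing the $j$-dimensional simplex volume. Since $F$ does not $\beta$-span a $k$-dimensional affine space, the maximum cardinality satisfies $j+1\le k$, hence $j\le k-1$. Setting $V':=\mathrm{aff}(p_0,\ldots,p_j)$ and taking $V$ to be any $(k-1)$-dimensional affine space with $V\supseteq V'$, the lemma reduces to showing $\dist(p,V')\le 2\beta$ for every $p\in F$.

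Fix $p\in F$. By cardinality maximality, the $(j+2)$-point set $\{p_0,\ldots,p_j,p\}$ fails to be $\beta$-linearly independent. Either $\dist(p,V')<\beta$ (and we are done), or some index $i$ satisfies $\dist(p_i,\mathrm{aff}(V_i\cup\{p\}))<\beta$, where $V_i:=\mathrm{aff}(\{p_0,\ldots,p_j\}\setminus\{p_i\})$. Projecting $\mathbb{R}^4$ orthogonally onto the orthogonal complement of the linear direction of $V_i$, one checks the key ratio identity
\[
\frac{\dist(p,V')}{\dist(p_i,\mathrm{aff}(V_i\cup\{p\}))}=\frac{\dist(p,V_i)}{\dist(p_i,V_i)}=\frac{\mathrm{vol}_j(V_i\cup\{p\})}{\mathrm{vol}_j(\{p_0,\ldots,p_j\})},
\]
where the last equality uses the base-times-height formula with common base $V_i$.

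Suppose, for contradiction, that $\dist(p,V')>2\beta$. Then the leftmost ratio exceeds $2$, so the $(j+1)$-subset $V_i\cup\{p\}$ has strictly more than twice the $j$-volume of $\{p_0,\ldots,p_j\}$; the volume-maximality choice then prevents $V_i\cup\{p\}$ from being $\beta$-linearly independent. The failing vertex cannot be $p$, since $\dist(p,V_i)\ge\dist(p,V')>2\beta$, so some $p_l$ with $l\ne i$ has height less than $\beta$ in $V_i\cup\{p\}$. The main obstacle will be closing this into a contradiction: I would iterate the swap argument, noting that every ``bad'' index in the ambient $(j+2)$-simplex $\{p_0,\ldots,p_j,p\}$ yields another $(j+1)$-subset of volume exceeding twice the purported maximum that again fails $\beta$-linear independence. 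Since $F$ is bounded, the simplex volumes achievable within $F$ are bounded, so the iteration must terminate in a $\beta$-linearly independent $(j+1)$-subset of strictly larger volume than the maximum, the desired contradiction. The case $j<k-1$ (i.e., $F$ failing to $\beta$-span even a $j'$-dimensional affine space for $j'<k$) should be handled in parallel by an induction on $k$, reducing immediately to the corresponding statement in lower dimension.
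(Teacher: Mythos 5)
Your single-swap step is sound --- the base-times-height ratio identity is correct --- but the contradiction does not close, and the place where it fails is exactly the spot you flag as ``the main obstacle.'' Because you maximize volume only over \emph{$\beta$-linearly independent} $(j+1)$-subsets, the conclusion that $\mathrm{vol}_j(V_i\cup\{p\})>2\,\mathrm{vol}_j(\{p_0,\dots,p_j\})$ yields no contradiction: that larger-volume subset is simply not $\beta$-linearly independent, which is consistent with your choice. The proposed iteration is not actually a procedure: after learning that some $p_l$ has height $<\beta$ inside $V_i\cup\{p\}$, there is no new point to swap in and no identity that produces a further $(j+1)$-subset of yet larger volume, so there is no monotone quantity forcing termination, and nothing guarantees the process ever lands on a $\beta$-linearly independent set of large volume. (A secondary issue: the maximum volume over subsets of an arbitrary bounded $F$ need not be attained, so ``volume-maximal'' must be replaced by a near-maximizer.)

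The paper's proof repairs this with one change of quantifier: it takes $k$ points of $F$ whose $(k-1)$-simplex volume is within a factor of $2$ of the supremum over \emph{all} $k$-point subsets of $F$ (not just the $\beta$-linearly independent ones); this also disposes of the attainment issue. If the volume is zero the claim is trivial; otherwise let $V$ be the spanned $(k-1)$-plane, and suppose some $p_{k+1}\in F$ has $\dist(p_{k+1},V)>2\beta$. Writing the $k$-volume of the simplex on $\{p_1,\dots,p_{k+1}\}$ as base times height with respect to each vertex, near-maximality now applies to \emph{every} opposite face, giving $d_j\ge d_{k+1}/2\ge\beta$ for all $j$ simultaneously. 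Hence $\{p_1,\dots,p_{k+1}\}$ is $\beta$-linearly independent and $F$ $\beta$-spans a $k$-dimensional affine space, contradicting the hypothesis in a single step with no iteration. You should adopt this unrestricted (near-)maximization; your ratio identity is then exactly the inequality $2d_j\ge d_{k+1}$ used in the paper.
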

\begin{proof}
For $k$ points $\{q_1,\cdots,q_k\}$ in $\mathbb{R}^4$, let $V(q_1,\cdots,q_k)$ be the volume of the $(k-1)$ dimensional simplex spanned by these points. 
Let $\{p_1,\cdots,p_{k}\}\subset F$ be $k$ points in $F$ such that 
\begin{equation} \label{i: assumption on volume}
V(p_1,\cdots,p_{k})\ge \frac{1}{2} \sup_{q_1,\cdots,q_k\in F} V(q_1,\cdots,q_k).
\end{equation}
 If the volume $V(p_1,\cdots,p_k)$ is zero, then $F$ is contained in a $(k-1)$-dimensional affine subspace, and the statement is trivial. 
If the volume is positive, then the set $\{p_1,\cdots,p_{k}\}$ spans a $k-1$ dimensional affine space $V$. If $F$ is contained in the $2\beta$ neighborhood of $V$, then the statement is verified. 
Otherwise, there exists a point $p_{k+1}\in F$, such that the distance of $p_{k+1}$ and $V$ is greater than $2\beta$. Let $d_j$ be the distance between $p_j$ and the affine subspace spanned by $\{p_i\}_{i=0}^{k+1}\backslash\{p_j\}$, then $d_{k+1}\ge 2\beta$. By \eqref{i: assumption on volume}, $2d_j\ge d_{k+1}$ for every $j$. Therefore $\{p_1,\cdots,p_{k+1}\}$ is $\beta$-linearly independent, and that contradicts the assumption on $F$.
\end{proof}

As in section \ref{s: flatness}, use the normal coordinate centered at $x_0$ to identify $B_{x_0}(32R)$ with the ball $\bar B(32R)$ in $\mathbb{R}^4$. Recall that by assumption \eqref{i: close to euclidean}, 
$$\big(\frac{11}{12}\big)^2\le \kappa_{x_0}(z) \le K_{x_0}(z) \le \big(\frac{12}{11}\big)^2,$$
where $\kappa_{x_0}(z)$ and $K_{x_0}(z)$ are the upper and lower bound of the eigenvalues of the metric matrix at $z\in \bar B_{x}(32R)$.

The compactness property of $\mathbb{Z}/2$ harmonic spinors leads to the following lemma. 

\begin{Lemma} \label{l: cor of compactness 1}
Let $\beta,\bar\beta,\tilde \beta\in(0,1)$ be given. Then there exits $\delta>0$, depending on $\beta,\bar\beta$, the upper bound $\Lambda$ of the frequency function, the value of $R$, the curvatures of $X$ and $\mathcal{V}$, and the constant $\epsilon$ in assumption \ref{assumption}, such that the following holds. If $x\in \bar B(R)$, $r\le \delta$, and $\{p_1,p_2,p_3\}$ is a set of $\bar\beta r$-linearly independent points in $\bar B_x(r)$, such that
$$
N_\phi(p_i,2r)- N_\phi(p_i,\tilde \beta r)<\delta \quad i=1,2,3.
$$
Let $V$ be the affine space spanned by $p_1,p_2,p_3$. Then the set $Z\cap \bar B_x(r)$ is contained in the $\beta r$ neighborhood of $V\cap \bar B_x(r)$.
\end{Lemma}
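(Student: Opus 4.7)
My plan is a blow-up/compactness argument by contradiction. Suppose the lemma fails; then there exist sequences $\delta_n\to 0$, radii $r_n\le\delta_n$, centers $x_n\in\bar B(R)$, triples $\{p_1^n,p_2^n,p_3^n\}\subset\bar B_{x_n}(r_n)$ that are $\bar\beta r_n$-linearly independent with $N_\phi(p_i^n,2r_n)-N_\phi(p_i^n,\tilde\beta r_n)<\delta_n$ for $i=1,2,3$, and ``bad'' points $q_n\in Z\cap\bar B_{x_n}(r_n)$ with $\dist(q_n,V_n)\ge\beta r_n$ for $V_n=\text{aff}(p_1^n,p_2^n,p_3^n)$. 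We translate $x_n$ to the origin, dilate by $r_n^{-1}$, and divide the spinor by a positive constant so that the rescaled smoothed height at the origin at scale $1$ equals $1$. Under this rescaling the Riemannian and bundle curvatures are multiplied by $r_n^2$, so the pulled-back data $(g_n,A_n,\rho_n)$ converge in $C^\infty$ on every fixed compact set to a flat triple $(g_*,A_*,\rho_*)$; this falls into the setting of Corollary \ref{cor: compactness for affine case}, applied from a sufficiently large starting ball so that the convergence domain contains $\bar B(3)$.

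After passing to a subsequence, the rescaled spinors $\tilde U_n$ converge uniformly and strongly in $W^{1,2}$ to a $\mathbb{Z}/2$ harmonic spinor $U^*$ for $(g_*,A_*,\rho_*)$ (still satisfying Assumption \ref{assumption}), and $\tilde p_i^n:=(p_i^n-x_n)/r_n\to p_i^*\in\bar B(1)$ with the $p_i^*$ still $\bar\beta$-linearly independent. Strong $W^{1,2}$ convergence yields $N_\phi^{(n)}(\tilde p_i^n,s)\to N_\phi^*(p_i^*,s)$ for every $s\in[\tilde\beta,2]$, and combined with the monotonicity of Lemma \ref{lem: mono of N} the pinching forces $N_\phi^*(p_i^*,\cdot)$ to be \emph{constant} on $[\tilde\beta,2]$. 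After a linear change of coordinates making $g_*$ Euclidean and a gauge transformation trivializing $A_*$, Remark \ref{r: flat case} applies and the flat version of \eqref{eqn: r der of N} carries no $O(r)$ error. Setting $\partial_r N_\phi^*(p_i^*,r)\equiv 0$ on $[\tilde\beta,2]$ forces
$$
\nabla_{\eta_{p_i^*}}u^*\equiv N_\phi^*(p_i^*)\,u^*
$$
on the open annulus $\{\tilde\beta<|y-p_i^*|<2\}$, where the integrand of the flat identity is strictly positively weighted. Integrating this radial ODE shows $u^*$ is positively homogeneous of degree $N_\phi^*(p_i^*)$ on the annulus; by scale-invariance of the Dirac equation and unique continuation on the connected open set $\mathbb{R}^4\setminus Z^*$ (the zero set has Hausdorff codimension at least two by Theorem \ref{thm: Taubes} and hence does not disconnect), the homogeneity extends to a punctured neighborhood of $p_i^*$. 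Consequently the limit zero set $Z^*$ is a closed cone with vertex at each of $p_1^*,p_2^*,p_3^*$.

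A closed subset of $\mathbb{R}^4$ which is a cone at three $\bar\beta$-affinely independent vertices is invariant under every translation by a vector in the linear $2$-plane $V_0:=\text{span}(p_2^*-p_1^*,p_3^*-p_1^*)$ (the standard cone-splitting computation: cone structure at two distinct points forces translation invariance along their join, and iterating with $p_3^*$ gives the full plane). Writing $Z^*=V_0+(Z^*\cap W^*)$ for a $2$-dimensional affine complement $W^*\ni p_1^*$ of $V_0$, the slice $Z^*\cap W^*$ is itself a closed cone at $p_1^*$ in $W^*\cong\mathbb{R}^2$; Theorem \ref{thm: Taubes} gives $\dim_H Z^*\le 2$, so $\dim_H(Z^*\cap W^*)\le 0$, and any closed cone in $\mathbb{R}^2$ of Hausdorff dimension zero must be $\{p_1^*\}$ or $\emptyset$. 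Therefore $Z^*\subseteq V^*:=\text{aff}(p_1^*,p_2^*,p_3^*)$. On the other hand, uniform convergence $\tilde U_n\to U^*$ together with $\tilde U_n(\tilde q_n)=2[\![0]\!]$ produces (along a further subsequence) a limit point $q^*\in Z^*$ of $\tilde q_n=(q_n-x_n)/r_n$ with $\dist(q^*,V^*)\ge\beta>0$, contradicting $Z^*\subseteq V^*$.

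The main obstacle is converting the pinching hypothesis into genuine radial homogeneity of the blow-up limit: this requires both the vanishing of all curvature corrections in \eqref{eqn: r der of N} under the $r_n^2$-rescaling (so that Remark \ref{r: flat case} applies exactly to the limit), and the use of unique continuation for the Dirac equation to promote the homogeneity relation from an annulus to a punctured neighborhood of each $p_i^*$. Once homogeneity about three $\bar\beta$-affinely independent points is in hand, the cone-splitting and the Hausdorff dimension input of Theorem \ref{thm: Taubes} close the argument routinely.
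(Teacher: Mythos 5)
Your proposal is correct and follows essentially the same route as the paper: contradiction, rescaling by $r_n$ and applying Corollary \ref{cor: compactness for affine case}, using the flat-case exactness of $\partial_r N_\phi$ (Remark \ref{r: flat case}) plus unique continuation to get homogeneity of the limit about the three $\bar\beta$-independent points, and then cone-splitting to contradict the existence of the far-away zero. The only cosmetic difference is the endgame: you split the cone on the zero set and use the Hausdorff dimension bound of Theorem \ref{thm: Taubes} to force $Z^*\subseteq V^*$, whereas the paper invokes the analogous elementary splitting lemma for the section itself and contradicts Theorem \ref{thm: Taubes} via a $3$-dimensional zero set.
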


\begin{proof}
Assume such $\delta$ does not exist. Then there exist sequences $\{p_i^{(n)}\}_{i=1}^3$, $x_n$, and $r_n$, such that $r_n\to 0$, the points $\{p_i^{(n)}\}_{i=1}^3$ are contained in $\bar B_{x_n}(r_n)$ and are $\bar \beta r_n$-linearly independent, and 
$$
N_\phi(p_i^{(n)},2r_n)- N_\phi(p_i^{(n)},\tilde\beta r_n)<\frac{1}{n} \quad i=1,2,3,
$$
and there exists $y_n\in Z$ such that the distance from $y_n$ to the affine space spanned by $\{p_i^{(n)}\}_{i=1}^3$ is greater than $ \beta r_n$.

Let $\sigma=12/11$.
Rescale the balls $\bar B_{x_n}(10\sigma^2 r_n)$ to radius $10\sigma^2$, and normalize the section $U$. Corollary \ref{cor: compactness for affine case} then gives a limit section $U^*$ which satisfies the following properties:
\begin{enumerate}
\item $U^*$ is a $\mathbb{Z}/2$ harmonic spinor on $\bar B(4)$, with respect to a translation-invariant metric, the trivial connection on $\mathcal{V}$, and a translation invariant Clifford multiplication. $U^*$ satisfies assumption \ref{assumption}.
\item There exist points $p_1^*,p_2^*,p_3^*\in \bar B(1)$, such that they are $\bar\beta$-linearly independent, and 
\begin{equation}\label{e: raduis der for U_infty 1}
N_\phi(p_i^*,2)- N_\phi(p_i^*,\tilde \beta)=0 \quad i=1,2,3,
\end{equation}
\item 
Let $V^*$ be the affine space spanned by $\{p_i^*\}_{i=1}^3$. There exits a point $q\in \bar B(1)$ in the zero set of $U^*$, such that the distance from $q$ to $V^*\cap \bar B(1)$ is at least $\beta$.
\end{enumerate}

Since $U^*$ is defined on a flat manifold with flat bundle, 
remark \ref{r: flat case} indicates that for $U^*$,
$$
\partial_r N_\phi(x,r)=\frac{2}{rH_\phi(x,r)}\int -\phi'\Big(\frac{d(x,y)}{r}\Big) d(x,y)^{-1} |\nabla_{\eta_x} u(y) - N_\phi(x,r) u(y) |^2 \, dy.
$$
Therefore equation \eqref{e: raduis der for U_infty 1} implies that for $i\in\{1,2,3\}$, the section $U^*$ is homogeneous on $\bar B_{p_i^*} (2)-\bar B_{p_i^*}(\tilde \beta)$ with respect to the center $p_i^*$. The unique continuation property for solutions to the Dirac equation implies that $U^*$ is homogeneous on $\bar B(2)$ with respect to $p_i^*$. An elementary argument (see for example \cite[Lemma 6.8]{rec}) then shows that the section $U^*$ is zero on the affine space $V^*$, and that $U^*$ is invariant in the directions parallel to $V^*$. Therefore, property (3) of $U^*$ implies that $U^*$ is zero on a $3$-dimensional affine subspace, which contradicts theorem \ref{thm: Taubes}.
\end{proof}

Similarly, one has
\begin{Lemma} \label{l: cor of compactness 2}
Let $\beta,\bar\beta,\tilde\beta\in(0,1)$ and $\tau>0$ be given. Then there exits $\delta>0$, depending on $\beta,\bar\beta,\tilde\beta,\tau$, the upper bound $\Lambda$ of the frequency function, the value of $R$, the curvatures of $X$ and $\mathcal{V}$, and the constant $\epsilon$ in assumption \ref{assumption}, such that the following holds. Assume $x\in \bar B(R)$, and $r\le \delta$, and $\{p_1,p_2,p_3\}$ is a set of points in $\bar B_x(r)$ that is $\bar \beta r$-linearly independent, such that
$$
N_\phi(p_i,2r)-N_\phi(p_i,\tilde\beta r)<\delta \quad i=1,2,3.
$$
Let $V$ be the affine space spanned by $\{p_i\}$. Then for all $y,y'\in \bar B_x(r)\cap Z$, one has 
$$
|N_\phi(y,\beta r)-N_\phi(y',\beta r)|< \tau.
$$
\end{Lemma}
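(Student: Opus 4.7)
The plan is to argue by contradiction and compactness, directly paralleling the proof of Lemma \ref{l: cor of compactness 1}. Suppose no such $\delta$ works for the given $(\beta,\bar\beta,\tilde\beta,\tau)$. Then we obtain sequences $r_n\to 0$, $x_n\in \bar B(R)$, triples $\{p_i^{(n)}\}_{i=1}^3\subset \bar B_{x_n}(r_n)$ that are $\bar\beta r_n$-linearly independent with $N_\phi(p_i^{(n)},2r_n)-N_\phi(p_i^{(n)},\tilde\beta r_n)<1/n$, and points $y_n, y_n'\in Z\cap \bar B_{x_n}(r_n)$ with $|N_\phi(y_n,\beta r_n)-N_\phi(y_n',\beta r_n)|\ge\tau$. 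Rescale the ball $\bar B_{x_n}(10\sigma^2 r_n)$ (with $\sigma=12/11$) to $\bar B(10\sigma^2)$, normalize so that $H_\phi^{(n)}(0,1)=1$, and apply Corollary \ref{cor: compactness for affine case} to pass to a subsequence whose rescalings converge in $W^{1,2}(\bar B(4))$ and uniformly to a $\mathbb{Z}/2$ harmonic spinor $U^*$ on $\bar B(4)$ with respect to a translation-invariant flat background. After a further subsequence, the rescaled $p_i^{(n)}, y_n, y_n'$ converge to points $p_i^*, y^*, (y')^*\in \bar B(1)$; the $p_i^*$ remain $\bar\beta$-linearly independent and span a $2$-plane $V^*$, while $y^*,(y')^*\in Z^*$ by uniform convergence, and by the continuity of $H_\phi, D_\phi$ under strong $W^{1,2}$ plus uniform convergence one has $N_\phi^{(n)}(\,\cdot\,,\,\cdot\,)$ converging to $N_\phi^*(\,\cdot\,,\,\cdot\,)$ at the relevant base points.

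In the limit, remark \ref{r: flat case} gives the exact flat-case identity
\[
\partial_r N_\phi^*(x,r)=\frac{2}{rH_\phi^*(x,r)}\int -\phi'\Big(\tfrac{d(x,y)}{r}\Big)\,d(x,y)^{-1}\,\bigl|\nabla_{\eta_x}u^*(y)-N_\phi^*(x,r)u^*(y)\bigr|^2\,dy.
\]
Since $N_\phi^*(p_i^*,2)=N_\phi^*(p_i^*,\tilde\beta)$, the integrand vanishes, so $U^*$ is homogeneous about $p_i^*$ on the annulus $\bar B_{p_i^*}(2)-\bar B_{p_i^*}(\tilde\beta)$; unique continuation for the Dirac equation extends this to all of $\bar B(4)$ for each $i$. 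Exactly as in the proof of Lemma \ref{l: cor of compactness 1}, the elementary argument of \cite[Lemma 6.8]{rec}, applied to three $\bar\beta$-linearly independent homogeneity centers, yields that $U^*$ is translation invariant along the $2$-dimensional linear subspace $W:=V^*-p_1^*$ and that $V^*\subset Z^*$.

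Assume now for contradiction that $y^*\notin V^*$. Homogeneity of $U^*$ about $p_1^*$ means $Z^*$ is a cone with vertex $p_1^*$, so the segment $\{p_1^*+t(y^*-p_1^*):t\ge 0\}\cap \bar B(4)$ is contained in $Z^*$. Combining with $W$-invariance, the set $\{p_1^*+t(y^*-p_1^*)+w:t\ge 0,\ w\in W\}\cap \bar B(4)$ lies in $Z^*$; since $y^*-p_1^*\notin W$ this is a $3$-dimensional piece, contradicting $\dim_H Z^*\le 2$ from Theorem \ref{thm: Taubes}. Hence $y^*\in V^*$, and symmetrically $(y')^*\in V^*$. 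But $U^*$ being $W$-invariant implies $N_\phi^*(\,\cdot\,,\beta)$ is constant on each $W$-coset, so $N_\phi^*(y^*,\beta)=N_\phi^*((y')^*,\beta)$, contradicting the $\tau$-gap that was preserved in the limit. The main technical point is the limit identification: one must check that $N_\phi^{(n)}(y_n,\beta r_n)\to N_\phi^*(y^*,\beta)$ after rescaling; this follows because $H_\phi$ is continuous in $(x,r,U)$ under uniform $+$ $W^{1,2}$ convergence of sections and $C^\infty$ convergence of metrics, while $H_\phi^*(y^*,\beta)>0$ by Lemma \ref{lem: mono of H} applied to $U^*\not\equiv 0$ (the normalization $H_\phi^*(0,1)=1$ survives the limit).
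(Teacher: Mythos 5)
Your proof is correct and follows essentially the same route as the paper: a contradiction/compactness argument producing a limit spinor $U^*$ that is homogeneous about three $\bar\beta$-linearly independent centers, hence translation invariant along $V^*$ with $Z^*\subset V^*$, so that $N_\phi^*(\cdot,\beta)$ cannot jump between two zeros in $\bar B(1)$. The paper's own proof simply says ``arguing as before'' and invokes the previous lemma; you have usefully made explicit the two points it leaves implicit (the cone argument forcing $Z^*\subset V^*$, and the convergence $N_\phi^{(n)}(y_n,\beta r_n)\to N_\phi^*(y^*,\beta)$ needed to carry the $\tau$-gap to the limit).
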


\begin{proof}
Assume such $\delta$ does not exist, then arguing as before, one obtains a $2$-valued section $U^*$ on $\bar B(4)$ with the following properties:

\begin{enumerate}
\item $U^*$ is a $\mathbb{Z}/2$ harmonic spinor on $\bar B(4)$, with respect to a translation-invariant metric, the trivial connection on $\mathcal{V}$, and a translation invariant Clifford multiplication. $U^*$ satisfies assumption \ref{assumption}.
\item There exist points $p_1^*,p_2^*,p_3^*\in \bar B(1)$, such that they are $\bar \beta$-linearly independent, and 
\begin{equation}\label{e: raduis der for U_infty}
N_\phi(p_i^*,2)-N_\phi(p_i^*, \tilde\beta)=0 \quad i=1,2,3,
\end{equation}
\item 
Let $Z^*$ be the zero set of $U^*$. There exist $y,y'\in \bar B(1)\cap Z^*$, such that 
$$
|N_\phi(y,\beta)-N_\phi(y',\beta)|\ge \tau.
$$
\end{enumerate}
However, as in the proof of the previous lemma, the first two properties imply that $U^*$ is invariant in the directions parallel to the plane $V^*$ spanned by $p_1^*$, $p_2^*$, $p_3^*$, and $Z^*\subset V^*$, which contradicts property (3).
\end{proof}

\section{Rectifiability and the Minkowski bound}

This section only concerns estimates on the Euclidean space. To simplify notations, for the rest of this section, use $B_x(r)$ and $B(r)$ to denote the Euclidean balls.

\begin{Definition} \label{def: tame}
Let $\mathcal{Z}$ be a Borel subset of $\bar B(R)\subset \mathbb{R}^4$. A function $\mathcal{I}(x,r)$ defined for $x\in \mathcal{Z}$ and $r\le 128R$ is called a taming function for $\mathcal{Z}$, if the following conditions hold.
\begin{enumerate}
\item $\mathcal{I}(x,r)$ is non-negative, bounded, continuous, and is non-decreasing in $r$.
\item \label{con: pinching}
Let $\beta,\bar\beta \in(0,1)$ and $\tau>0$ be given. Then there exists $\epsilon(\beta,\bar \beta,\tau)>0$, depending on $\beta,\bar\beta,\tau$, such that the following holds. Assume $x\in \bar B(R)$, $r\le R$, and $\{p_1,p_2,p_3\}$ is a set of points in $\bar B_x(r)$ that is $\bar\beta r$-linearly independent, such that
$$
\mathcal{I}(p_i,2r)-\mathcal{I}(p_i,\beta r/2)<\epsilon(\beta,\bar \beta,\tau) \quad i=1,2,3.
$$
Then for all $y,y'\in \bar B_x(r)\cap \mathcal{Z}$, one has 
$$
|\mathcal{I}(y,\beta r/2)-\mathcal{I}(y',\beta r/2)|< \tau.
$$
\item \label{con: Reifenberg}
There exists a constant $C$, such that for every Radon measure $\mu$ supported in $\mathcal{Z}$, the following inequality holds for every $x\in \bar B(2R)$ and $r\le 2R$:
$$
D_\mu^2(x,r) \le \frac{C}{r^2}\int_{\bar B_x(r)}[\mathcal{I}(z,32r)-\mathcal{I}(z,2r)]\,d\mu(z).
$$
\end{enumerate}
\end{Definition}

The following result follows almost verbatim from sections 7 and 8 of \cite{rec}, and a large part of the arguments originated from \cite{naber2015rectifiable}. Nevertheless, a proof is provided here for the reader's convenience.

\begin{Theorem}[\cite{naber2015rectifiable}, \cite{rec}] \label{thm: covering argument}
Assume $\mathcal{Z}$ is a Borel subset of $ B(R)$ and there exists a taming function $\mathcal{I}(x,r)$ for $\mathcal{Z}$. Then the set $\mathcal{Z} \cap  B(R/2)$ is $2$-rectifiable and has finite $2$-dimensional Minkowski content.
\end{Theorem}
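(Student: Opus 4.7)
The plan is to transcribe the Naber-Valtorta covering scheme \cite{naber2015rectifiable} (as presented in Sections 7--8 of \cite{rec}) to the present abstract setting. The three properties in Definition \ref{def: tame} have been extracted precisely so that the argument goes through formally: boundedness and monotonicity in (1) give a finite ``energy budget'' for $\mathcal{I}$, the pinching property (2) plays the role of Lemma \ref{l: cor of compactness 2} in forcing flatness near pinched triples, and the $L^2$-best-plane estimate (3) supplies the hypothesis of the quantitative rectifiable Reifenberg theorem of Naber-Valtorta.

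First I would fix small parameters $\beta, \bar\beta \in (0,1)$ and $\tau > 0$, and let $\eta = \epsilon(\beta, \bar\beta, \tau)$ be the constant from condition (2). Then at dyadic scales $r_k = \rho^k R$ (for some small $\rho$), starting from a cover of $\mathcal{Z} \cap B(R/2)$ by balls of radius $R/2$, I would refine from scale $r_k$ to $r_{k+1}$ by selecting a maximal $r_{k+1}$-separated set of points in $\mathcal{Z}$ inside each ball, and classifying each resulting ball $\bar B_{x_i}(r_k)$ as \emph{good} if it contains three $\bar\beta r_k$-linearly independent points $\{p_1, p_2, p_3\}$ with $\mathcal{I}(p_j, 2 r_k) - \mathcal{I}(p_j, \beta r_k/2) < \eta$, and \emph{bad} otherwise. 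In a bad ball, no such triple exists, so Lemma \ref{lem: spine} confines $\mathcal{Z}$ in the ball to the $2\bar\beta r_k$-neighborhood of a line, which can be refined by $O(\rho^{-1})$ balls at the next scale; alternatively, every triple contains a point with a large $\mathcal{I}$-drop, which charges the ball against the total $\mathcal{I}$-energy. In a good ball, condition (2) yields $|\mathcal{I}(y,\beta r_k/2)-\mathcal{I}(y',\beta r_k/2)| < \tau$ for all $y,y' \in \bar B_{x_i}(r_k)\cap \mathcal{Z}$, the uniform pinching needed below.

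The heart of the argument is the packing estimate for good balls. Form the discrete measure $\mu_k = \sum_{i} r_k^2 \, \delta_{x_i}$ supported on the centers of the good cover, and invoke condition (3) to get $D_{\mu_k}^2(x, r) \leq C r^{-2} \int_{\bar B_x(r)} [\mathcal{I}(z, 32 r) - \mathcal{I}(z, 2 r)]\,d\mu_k(z)$. Since for each fixed $z$ the dyadic sum $\sum_{k} [\mathcal{I}(z, 32 r_k) - \mathcal{I}(z, 2 r_k)]$ telescopes (up to bounded overlap of the intervals $[2r_k, 32 r_k]$) and is therefore bounded by $C\,\|\mathcal{I}\|_\infty$, the hypotheses of the Naber-Valtorta rectifiable Reifenberg theorem are met. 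That theorem simultaneously yields the uniform packing bound $\sum_i r_k^2 \leq C R^2$ at every scale, the 2-rectifiability of the limit set, and, after summing the volumes of the balls $\{\bar B_{x_i}(r_k)\}$, the Minkowski estimate $\mathrm{Vol}(\{x : \mathrm{dist}(x, \mathcal{Z}\cap B(R/2)) < r_k\}) \leq C r_k^2$. Letting $k \to \infty$ gives the conclusion.

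The main obstacle is the bookkeeping at each dyadic refinement: one must verify that the volume contributed by bad balls (charged against $\mathcal{I}$-energy via a pigeonhole on scales, since $\mathcal{I}$ is bounded) together with that contributed by good balls (controlled via the Reifenberg theorem with condition (3)) combines to give a uniform $2$-dimensional packing bound, without losing a constant that grows with $k$. Since the properties (1)--(3) exactly codify the features of the frequency $N_\phi$ actually used in \cite[\S7--8]{rec}, the argument there should transfer with only cosmetic changes; the verification to be done is that every estimate in those sections appeals only to (1)--(3) and not to specific analytic features of $N_\phi$.
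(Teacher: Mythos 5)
Your overall strategy is the right one and matches the paper's: run the Naber--Valtorta covering scheme, feed condition (3) into the discrete rectifiable Reifenberg theorem to get packing bounds, and use condition (2) to propagate pinching. But there is a genuine gap at the central quantitative step. You write that because the dyadic sum $\sum_k [\mathcal{I}(z,32r_k)-\mathcal{I}(z,2r_k)]$ telescopes and is bounded by $C\|\mathcal{I}\|_\infty$, ``the hypotheses of the Naber--Valtorta rectifiable Reifenberg theorem are met.'' They are not: the hypothesis of that theorem (Theorem \ref{thm: NV}) is a \emph{smallness} condition, $\int_{B_x(r)}\int_0^r D_\mu^2(z,s)\,s^{-1}\,ds\,d\mu(z) < \delta_0 r^2$ for a small universal constant $\delta_0$, not a finiteness condition. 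A telescoped bound of order $\Lambda=\|\mathcal{I}\|_\infty$ is useless here, since $\Lambda$ has nothing to do with $\delta_0$. This is precisely why the proof cannot be a single pass of the Reifenberg theorem: one must first restrict to centers lying in the set $D_\delta(r)=\{x: \mathcal{I}(x,\beta r/2)\ge \Lambda-\delta\}$, so that the total drop of $\mathcal{I}$ below the current scale is at most $\delta+\tau$, and then choose $\delta,\tau$ small compared to $\delta_0$ (this is inequality \eqref{eqn: bound int W/t} and the choice of $\tau,\delta$ at the end of Step 3). The points \emph{not} in $D_\delta$ are then handled by an induction that lowers the frequency ceiling from $\Lambda$ to $\Lambda-\delta$ and repeats, terminating after $\lceil \Lambda/\delta\rceil$ generations with a controlled multiplicative loss in volume (Steps 5--6). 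You allude to ``charging bad balls against the total $\mathcal{I}$-energy via a pigeonhole on scales,'' which is the right instinct, but this is not bookkeeping to be deferred --- it is the main mechanism of the proof, and without it the packing bound does not close.

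Two smaller overstatements: the discrete Reifenberg theorem as used here yields only the measure bound $\mu(B(R))\le K_0R^2$; it does not ``simultaneously yield'' rectifiability or the Minkowski estimate. The Minkowski content bound requires driving the covering all the way down to a prescribed scale $s$ with uniformly bounded total volume (the output of Step 6), and rectifiability requires either unpacking the proof of the Reifenberg theorem or, as the paper does, a separate application of the Azzam--Tolsa criterion (Theorem \ref{thm: AT}) using the Hausdorff measure bound already established.
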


The proof of theorem \ref{thm: covering argument} makes use of the following Reifenberg-type theorem. We state the theorem for the cases of dimension 4 and codimension 2.

\begin{Theorem}[\cite{naber2015rectifiable}, Theorem 3.4]
\label{thm: NV}
There exist universal constants $K_0>0$ and $\delta_0>0$ such that the following holds. Assume $\{B_{x_i}(r_i)\}$ is a collection of balls in $B(2R)$, such that $\{B_{x_i}(r_i/4)\}$ are disjoint. Define a measure $\mu= \sum_i r_i^2\delta_{x_i}$. Suppose
$$
\int_{B_x(r)} \int_0^r \frac{D_\mu^2(z,s)}{s}\,ds d\mu(z)< \delta_0 r^2
$$ 
for every $B_x(r)\subset B(2R)$, then $\mu (B(R))\le K_0 R^2$.
\end{Theorem}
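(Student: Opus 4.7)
The plan is to bound $\mu(B(R)) = \sum_{x_i \in B(R)} r_i^2$ by constructing a Lipschitz $2$-graph $\Gamma \subset B(2R)$ with $\mathcal{H}^2(\Gamma) \le C R^2$ that passes within distance $r_i/100$ of every $x_i$. Since the balls $B_{x_i}(r_i/4)$ are pairwise disjoint in $\mathbb{R}^4$ by hypothesis, their nearest footprints on $\Gamma$ are disjoint $2$-disks of area $\gtrsim r_i^2$; summing, $\sum r_i^2 \lesssim \mathcal{H}^2(\Gamma) \lesssim R^2$, which is the desired packing bound with a universal constant $K_0$.

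The core estimate driving the graph construction is a best-plane comparison. For each $z \in \text{supp}(\mu)$ and $r>0$, fix a minimizing $2$-plane $L_{z,r}$ in the definition of $D_\mu^2(z,r)$. Using the eigenvalue description $D_\mu^2(z,r) = r^{-4}(\lambda_1 + \lambda_2)$ from Section~\ref{s: flatness} together with a Davis--Kahan perturbation of eigenspaces of the $\mu$-covariance matrix on nested balls, one obtains, whenever $\mu$ is sufficiently non-degenerate in the planar directions, a one-step comparison
$$
r^{-2}\, d_H^2\bigl(L_{z,r} \cap B_z(r),\, L_{z, r/2} \cap B_z(r/2)\bigr) \;\le\; C\bigl(D_\mu^2(z,r) + D_\mu^2(z, r/2)\bigr).
$$
Telescoping over dyadic scales $r_k = 2^{-k}R$ and integrating against $\mu$ converts the Dini hypothesis of the theorem into an $L^2(\mu)$ bound on the total oscillation of $L_{z,r}$ as $r \to 0$, so that for $\mu$-most $z$ the planes $L_{z,r}$ stay within a small angle of a fixed reference plane $L_0$ at all scales $r \le R$. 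I would then build $\Gamma$ as the graph over $L_0 \cap B(2R)$ of a Lipschitz interpolant of the projections $\pi_{L_0}(\text{supp}\,\mu)$, with Lipschitz constant $O(\sqrt{\delta_0})$; for $\delta_0$ small enough the disjoint-footprint step of the first paragraph applies.

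The main obstacle, and the heart of the Naber--Valtorta argument, is that the oscillation bound above is only $L^2(\mu)$-averaged: there is an exceptional set on which either the best plane fails to be close to $L_0$, or the $\mu$-covariance degenerates so that the Davis--Kahan step breaks down. The standard remedy is a corona / stopping-time decomposition. One fixes small thresholds and defines maximal stopping balls $B_{y_\alpha}(s_\alpha)\subset B(R)$ inside which the accumulated dyadic Dini sum at $y_\alpha$ first exceeds the threshold, or the best-plane/non-degeneracy conditions first fail. Outside these balls the Lipschitz-graph construction above applies and yields the packing bound for the ``regular'' part of $\mu$. For the mass inside each $B_{y_\alpha}(s_\alpha)$ one recurses on the theorem at the strictly smaller scale $s_\alpha$, using that the hypothesis $\int_{B_x(r)}\int_0^r D_\mu^2(z,s)/s\,ds\,d\mu(z)<\delta_0 r^2$ is inherited by every sub-ball $B_x(r)\subset B_{y_\alpha}(s_\alpha)$. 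A geometric-series argument over the generations of stopping balls collapses all contributions into the single bound $\mu(B(R)) \le K_0 R^2$, with the value of $\delta_0$ dictated by requiring this geometric sum to converge.
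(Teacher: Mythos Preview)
The paper does not prove this theorem at all: it is quoted verbatim as Theorem~3.4 of \cite{naber2015rectifiable} and used as a black box in the covering argument (the paper says so explicitly in Step~7 of the proof of Theorem~\ref{thm: covering argument}). So there is nothing in the paper to compare your proposal against; you have gone well beyond what the paper attempts.

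As an outline of the actual Naber--Valtorta argument your sketch is in the right spirit---the Lipschitz-graph-plus-disjoint-footprint endgame, the best-plane stability controlled by $D_\mu^2$, and the corona/stopping-time recursion are all genuine ingredients. Two caveats worth flagging if you intend to flesh this out. First, the stability of best planes does not come cleanly from Davis--Kahan alone: when the covariance matrix degenerates (small $\lambda_3$) the eigenspace perturbation bound blows up, and this is precisely one of the stopping conditions you allude to but do not quantify; in the real proof one needs a careful \emph{a priori} lower bound on the ``effective'' mass in the good region to keep the induction running. Second, the recursion on stopping balls is not automatic: you must show that the stopping balls themselves satisfy a packing estimate $\sum s_\alpha^2 \le (1-\eta) R^2$ for some $\eta>0$, since otherwise the geometric series does not converge. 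That packing estimate is extracted from the Dini hypothesis together with the definition of the stopping time, and is where the smallness of $\delta_0$ is actually spent. Your final paragraph gestures at this but does not isolate it as the key step.
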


\begin{proof}[Proof of theorem \ref{thm: covering argument}]

Assume $ B_x(r)\subset  B(R)$. If one rescales $ B_x(r)$ to $ B(R)$, then the function $\mathcal{I}'(y,s)= \mathcal{I}(x+(ry)/R, sr/R)$ is a taming function for $[(A-x)\cdot(R/r)]\cap  B(R)$ with the same function $\epsilon(\beta,\bar \beta, \tau)$ and constant $C$. Therefore definition \ref{def: tame} is invariant under rescaling, thus one only needs to consider the case for $R=2$. 

Let $\beta=1/10$. Let $ \bar \beta\le 1/100$ be a positive universal constant, let $\tau>0$ be a constant that is defined by $\bar \beta$ and $C$, and let $\delta>0$ be a constant that is defined by $\bar \beta,\tau$, the function $\epsilon$ and the constant $C$. The exact values for $ \bar \beta,\tau$ and $\delta$ will be determined later in the proof. 

Let $\Lambda$ be an upper bound of $\mathcal{I}$, namly $\displaystyle \Lambda\ge\sup_{x\in A,x\le 128R} \mathcal{I}(x,r)=\sup_{x\in A} \mathcal{I}(x,256)$. 

Define
$$D_\delta(r)=B(R/2)\cap \{x\in \mathcal{Z}| \mathcal{I}(x, \beta r/2)\ge \Lambda-\delta\}.$$
Define 
$$W_{r_1}^{r_2}(x)=\mathcal{I}(x,r_1)-\mathcal{I}(x,r_2).$$
If $\{B_{x_i}(r_i)\}$ is a family of balls, we call the sum $ \sum_i r_i^2$ its 2-dimensional volume.
\\

{\bf Step 1.}
First, require that $\delta<\epsilon(\beta,\bar \beta,\tau)$.
For $ B_x(r)\subset  B(2)$, and a set $A\subset \mathcal{Z}\cap B_x(r)$, define an operator $\mathcal{F}_A$, which turns $B_x(r)$ into a finite set of balls. It has the property that either $\mathcal{F} _A(B_x(r))=\{B_x(r)\}$, or $\mathcal{F} _A(B_x(r))$ is a family of balls with radius $\beta r$. In either case, the balls in $\mathcal{F}(B_x(r))$ will cover the set $A$. The operator $\mathcal{F} _A $ is defined as follows. If $A\cap D_\delta(r)$ does not $\bar\beta r$-span a 2-dimensional affine space, then it is called ``bad''. Otherwise, it is called ``good''. In the bad case, define $\mathcal{F}_A(B_x(r))=\{B_x(r)\}$. In the good case, cover $A$ by a family of balls $\{B_{x_i}(\beta r)\}$ with the following properties
\begin{enumerate}
\item The distance between $x_i$ and $x_j$ is at least $ \beta r/2$ for $\forall i\neq j$,
\item Each $x_i$ is an element of $A$.
\end{enumerate}
Define $\mathcal{F}_A(B_x(r))$ to be the family $\{B_{x_i}(\beta r)\}$. 

Obviouly the descriptions above do not uniquely specify the operator $\mathcal{F}_A$. When there are more than one possibilities, choose one arbitrarily. 

If $B_x(r)$ is a good ball, let $p_1,p_2,p_3\in D_\delta(r)\cap B_x(r)$ be three points that $\bar \beta r$ span a plane, let $\mathcal{F}(B_x(r))=\{B_{x_i}(\beta r)\}$.
By condition (\ref{con: pinching}) of definition \ref{def: tame}, 
$$|\mathcal{I}({x_i},\beta r/2)-\mathcal{I}(p_i,\beta r/2)| \le \tau.$$
Therefore 
\begin{equation} \label{eqn: pinching for small ball}
\mathcal{I}(x_i,\beta r/2)\ge \Lambda-\delta-\tau
\end{equation}

The operator $\mathcal{F}_A$ can be extended to act on a collection of balls. Assume $\{B_{x_i}(r)\}_{i=1}^n$ is a collection of balls with the same radius.
Let $A\subset \bigcup B_{x_i}(r)\cap \mathcal{Z}$. Assume $\{B_{x_i}(r)\}_{i=1}^k$ are the good balls, and $\{B_{x_i}(r)\}_{i=k+1}^n$ are the bad balls. Then there exists a collection of balls $\{B_{y_j}(\beta r)\}$, such that
\begin{enumerate}
\item $\{B_{y_j}(\beta r)\}$ covers 
$\bigcup_{i=1}^k (A\cap B_{x_i}(r)).$
\item $|y_j-y_l|\ge \beta r/2$, for $\forall j\neq l$.
\item $y_j\in \bigcup_{i=1}^k A\cap B_{x_i}(r)$, for $\forall j$.
\end{enumerate}
Inequality \eqref{eqn: pinching for small ball} still holds when $x_i$ is replaced by $y_j$.
Define $\mathcal{F}_A\{B_{x_i}(r)\}$ to be the union of $\{B_{y_j}(\beta r)\}$ and $\{B_{x_i}(r)\}_{i=k+1}^n$. 
\\

{\bf Step 2.} 
Let $N>0$ be a positive integer. Let $A_0(x,r)=\mathcal{Z}\cap B_x(r)$. Apply the operator $\mathcal{F}_{A_0}$ to $B_x(r)$ to obain a set of balls, which we denote by $\mathcal{S}_1(x,r)$. Assume $\mathcal{S}_1(x,r)$ splits to two sets $\mathcal{S}_1(x,r)=\mathcal{S}_{1,g}(x,r)\bigcup \mathcal{S}_{1,b}(x,r)$, where $\mathcal{S}_{1,g}(x,r)$ is the collection of good balls and $\mathcal{S}_{1,b}(x,r)$ is the collection of bad balls. Let 
$$A_1(x,r)=A_0(x,r)-\bigcup_{B_{x_i}(r_i)\in\mathcal{S}_{1,b}(x,r)}B_{x_i}(r_i).$$ Apply $\mathcal{F}_{{A_1}(x,r)}$ to $\mathcal{S}_{1,g}(x,r)$ and obtain a new set of balls 
$$\mathcal{S}_2(x,r)=\mathcal{F}_{A_1(x,r)}(\mathcal{S}_{1,g}(x,r))\bigcup \mathcal{S}_{1,b}(x,r).$$ 
Similarly, write $\mathcal{S}_2(x,r)=\mathcal{S}_{2,g}(x,r)\bigcup \mathcal{S}_{2,b}(x,r)$, and define
$$A_2(x,r)=A_1(x,r)-\bigcup_{B_{x_i}(r_i)\in\mathcal{S}_{2,b}(x,r)}B_{x_i}(r_i),$$
and define $\mathcal{S}_3=\mathcal{F}_{A_2}(\mathcal{S}_{2,g})\bigcup \mathcal{S}_{2,b}$. Repeat the procedure $N$ times to obtain a set of balls $\mathcal{S}_N(x,r)$. 

The family $\mathcal{S}_N(x,r)$ has the following property. If $B_{x_1}(r_1)$ and $B_{x_2}(r_2)$ are two distinct elements of $\mathcal{S}_N(x,r)$, then 
\begin{equation} \label{eqn: separate balls}
|x_1-x_2|\ge (r_1+r_2)/4.
\end{equation}
Inequality \eqref{eqn: separate balls} can be proved by induction. For $N=1$, it follows from the definition of $\mathcal{F}_A$. Assume \eqref{eqn: separate balls} holds for $N-1$, and write $\mathcal{S}_N=\mathcal{F}_{A_{N-1}}(\mathcal{S}_{N-1,g})\bigcup \mathcal{S}_{N-1,b}$. Let $B_{x_1}(r_1),B_{x_2}(r_2)\in\mathcal{S}_N$. If both $B_{x_1}(r_1),B_{x_2}(r_2)\in\mathcal{F}_{A_{N-1}}(\mathcal{S}_{N-1,g})$, then \eqref{eqn: separate balls} follows from the definition of $\mathcal{F}$. If both $B_{x_1}(r_1),B_{x_2}(r_2)\in\mathcal{S}_{N-1,b}$, then \eqref{eqn: separate balls} follows from the induction hypothesis. If $B_{x_1}(r_1)\in\mathcal{F}_{A_{N-1}}(\mathcal{S}_{N-1,g})$, $B_{x_2}(r_2)\in\mathcal{S}_{N-1,b}$, then $x_1\not\in B_{x_2}(r_2)$. By the construction of $\mathcal{F}$, one has $r_1\le \beta r_2$. Since $\beta=1/10$, one has $|x_1-x_2|\ge r_2\ge (r_1+r_2)/2.$

By \eqref{eqn: pinching for small ball}, either $\mathcal{S}_N=\{B_x(r)\}$, or 
\begin{equation}\label{eqn: small ball small pinching}
\mathcal{I}(x_i,r_i/2)\ge\Lambda-\delta-\tau, \quad\forall\, B_{x_i}(r_i)\in\mathcal{S}_N.
\end{equation}

{\bf Step 3.}
We claim that there exists a universal constant $K_1>1$, such that for $\tau$ and $\delta$ sufficiently small, we have
\begin{equation} \label{i: upper bound on measure}
\sum_{B_{x_i}(r_i)\in \mathcal{S}_N(x,r) } r_i^2 < K_1\, r^2. 
\end{equation}
Without loss of generality, assume $\mathcal{S}_N(x,r)\neq\{B_x(r)\}$.
Let $r_j=\beta^{N-j}\,r$. Define Radon measures
$$\mu = \sum_{B_{y}(s)\in \mathcal{S}_N(x,r)} s^2 \delta_{y},$$
$$
\mu_j = \sum_{B_{y}(s)\in \mathcal{S}_N(x,r), s\le r_j} s^2 \delta_{y}.
$$

Notice that by \eqref{eqn: separate balls}, there exists a universal constant $K_2$ such that 
\begin{equation} \label{eqn: j=0}
\mu_0((B_x(r_0))\le K_2 \, r_0^2,\quad \forall x.
\end{equation}

Let $K_0$ be the constant given by theorem \ref{thm: NV}, let $K_3=\max\{K_0,K_2\}$. We prove that if $\tau, \delta$ is chosen sufficiently small, then for every $j=0,1,\cdots,N-3$, and every $B_{y}(r_j)\subset B_x(2r)$, one has 
\begin{equation} \label{eqn: measure bound}
\mu_j(B_{y}(r_j))\le K_3\, r_j^2.
\end{equation}
The claim is proved by induction on $j$. The case for $j=0$ follows from \eqref{eqn: j=0}. Assume that the claim is proved for $0,1,\cdots,j$, and $j<N-3$. Then there exists a universal constant $M>1$, such that for every $y\in B_x(2r)$, $k\le{j+1}$, and $s\in[r_{k}/2, 2r_{k}]$,
\begin{equation} \label{eqn: weak bound}
\mu_{k+3}((B_y(s)) \le M\,(K_3+1) \,s^2
\end{equation}
We want to use theorem \ref{thm: NV} and \eqref{eqn: weak bound} to prove
$$\mu_{j+1}((B_y(r_{j+1})) \le  K_3\, r_{j+1}^2, \text{ for }\forall B_y(r_{j+1})\subset B_x(2r).$$
If $\mu_{j+1}(B_y(r_{j+1}))=0$, the inequality is trivial. From now on assume $\mu(B_y(r_{j+1}))>0$. Since $r_{j+1}\le r_{N-3}=r/8$, and $\supp \mu\subset B_x(r)$, we have $B_y(4r_{j+1})\subset B_x(2r)$. 

Notice that for $B_{x_i}(s_i)\in\mathcal{S}_N$, if $\displaystyle t<\min_k|x_i-x_k|$, then 
$$
D_\mu^2(x_i,t)=0.
$$
Define 
$$
\overline W_{2t}^{32t}(x_i)=
\begin{cases}
0 \qquad\qquad \text{if }t< s_i/4, \\
W_{2t}^{32t}(x_i) \,\,\text{ if } t\ge s_i/4.
\end{cases}
$$
Inequality \eqref{eqn: separate balls} and condition (\ref{con: Reifenberg}) of definition \ref{def: tame} gives
\begin{equation} \label{eqn: modified Reifenberg}
D_{\mu}^2(q,t) \le C\int_{B_q(t)}\frac{\overline W_{2t}^{32t}(p)}{t^3}\,d\mu(p) 
\end{equation}
for every $(q,t)$.

For $B_z(s)\subset B_y(2r_{j+1})$, assume $s\in[r_k/2,2r_k]$ for $k\le j+1$. Inequality \eqref{eqn: modified Reifenberg} gives
\begin{align}
&\int_{B_z(s)} \int_0^s \frac{D_{\mu_{j+1}}^2(q,t)}{t}\,dt \,d\mu_{j+1}(q) \nonumber
\\
\le & C\int_{B_z(s)}\int_0^s\int_{B_q(t)}\frac{\overline W_{2t}^{32t}(p)}{t^3}\,d\mu_{j+1}(p)\,dt\,d\mu_{j+1}(q)
\nonumber
\\
\le & C\int_{B_z(s)}\int_0^s\int_{B_q(t)}\frac{\overline W_{2t}^{32t}(p)}{t^3}\,d\mu_{k+3}(p)\,dt\,d\mu_{k+3}(q)
\label{eqn: radius in the integral}
\\
\le & C\int_{B_z(2s)}\int_0^s\int_{B_p(t)} \frac{\overline W_{2t}^{32t}(p)}{t^3}\,d\mu_{k+3}(q)\,ds\,d\mu_{k+3}(p)
\nonumber
\\
\le & CM(K_3+1) \int_{B_z(2s)}\int_0^s \frac{\overline W_{2t}^{32t}(p)}{t}\,dt\,d\mu_{k+3}(p),
\label{eqn: verify discrete Reifenberg condition step 1}
\end{align}
where inequality \eqref{eqn: radius in the integral} follows from \eqref{eqn: separate balls}. For $p\in\supp\mu_{j+1}$, let $s_p$ be the radius of ball in $\mathcal{S}_N$ with center $p$. If $s\ge s_p/4$, then
\begin{align}
&\int_0^s \frac{\overline W_{2t}^{32t}(p)}{t}\,dt
=
 \int_{s_p/4}^s \frac{W_{2t}^{32t}(p)}{t}\,dt
 = \int_{2s}^{32s} \mathcal{I}(p,t) \,dt
  - \int_{s_p/a}^{16s_p} \mathcal{I}(p,t)\, dt \nonumber
\\
&\quad \le  W_{s_p/2}^{32s}(p)\int_{2}^{32} \frac{1}{t}\,dt 
\le \ln(16)\,(\delta+\tau). \label{eqn: bound int W/t}
\end{align}
The last inequality above follows from \eqref{eqn: small ball small pinching}. 
Therefore, the right hand side of \eqref{eqn: verify discrete Reifenberg condition step 1} is bounded by
\begin{align*}
& CM(K_3+1) \int_{B_z(2s)}\int_0^s \frac{\overline W_{2t}^{32t}(p)}{t}\,dt\,d\mu_{k+3}(p) 
\\
\le & CM(K_3+1) \,  \mu_{k+3}(B_z(2s)) \ln(16)\,  (\tau+\delta)
\le 4CM^2(K_3+1)^2\ln(16) (\tau+\delta)\,s^2
\end{align*}
Let $\delta_0$ be the constant given by theorem \ref{thm: NV}. Take 
$$\tau<\frac{\delta_0}{8CM^2(K_3+1)^2\ln(16)},$$
and
$$\delta<\frac{\delta_0}{8CM^2(K_3+1)^2\ln(16)},$$
then the conditions of theorem \ref{thm: NV} are satisfied, therefore 
$\mu_{j+1}((B_y(r_{j+1})) \le  K_0 \,r_{j+1}^2.$ By induction, \eqref{eqn: measure bound} is proved. Inequality \eqref{i: upper bound on measure} then follows from \eqref{eqn: measure bound} by the the case of $j=N-3$.
\\

{\bf Step 4.} By lemma \ref{lem: spine}, the result obtained from the previous steps can be summarized as follows. For any integer $N>0$, and any ball $B_x(r)$, there is a covering of $\mathcal{Z}\cap B_x(r)$ by a family of balls $\mathcal{S}_N(x,r)=\{B_{x_i}(r_i)\}_i$, such that the following properties hold:
\begin{enumerate}
\item The radius of each ball is at least $\beta^N \, r$.
\item For a all $B_{x_i}(r_i)\in \mathcal{S}_N$, either $r_i=\beta^N\, r$, or $r_i=\beta^j\,r$ for some integer $j<N$, and $B_{x_i}(r_i) \cap D_\delta(r_i)$ is contained in the $2 \bar \beta r_i$ neighborhood of a line.
\item $\sum_i r_i^2 \le K_1\, r^2$.
\end{enumerate}
As a consequence,
\begin{Lemma} \label{lem: admissible cover}
There exists a universal constant $K_1>1$, and a constant $\delta$, such that the following property holds.
For any $B_x(r)\subset B(2)$, and $s\in(0,r)$, there exists a covering of $\mathcal{Z}\cap B_x(r)$ by balls $\mathcal{S}=\{B_{x_i}(r_i)\}_i$, such that
\begin{enumerate}
\item The radius of each ball is at least $\beta s$.
\item For a ball $B_{x_i}(r_i)\in \mathcal{S}$, either $r_i\le s$, or $B_{x_i}(r_i) \cap D_\delta(r_i)$ is contained in the $2 \bar \beta r_i$ neighborhood of a line.
\item $\sum_i r_i^2 \le K_1\, r^2$.
\end{enumerate}
\end{Lemma}

{\bf Step 5.}
We prove the following lemma
\begin{Lemma}\label{lem: strong admissible cover}
There exists a universal constant $K_4$, and a constant $\delta$, such that the following property holds.
For any $B_x(r)\subset B(2)$, and $s\in(0,r)$, there exists a splitting of $\mathcal{Z}$ into $\mathcal{Z}=\bigcup_i\mathcal{E}_i$, and a family of balls $\mathcal{S}=\{B_{x_i}(r_i)\}_i$, such that
\begin{enumerate}
\item $\mathcal{E}_i\subset B_{x_i}(r_i)$.
\item The radius of each ball is at least $4\bar\beta s$.
\item For a ball $B_{x_i}(r_i)\in \mathcal{S}$, either $r_i\in[4\bar\beta s, s]$, or $B_{x_i}(r_i) \cap D_\delta(r_i)=\emptyset$ 
\item $\sum_i r_i^2 \le K_4\, r^2$.
\end{enumerate}
\end{Lemma}

\begin{proof}[Proof of lemma \ref{lem: strong  admissible cover}]

Notice that by the assumptions on $\beta$ and $\bar \beta$, we have $4\bar\beta<\beta$.

If $\{B_{x_i}(r_i)\}_i$ is a covering of $\mathcal{Z} \cap B_x(r)$ that satisfies the three properties given by lemma \ref{lem: admissible cover} with respect to $s$, we say that $\{B_{x_i}(r_i)\}_i$ is an $s$-admissible covering of $B_x(r)\cap \mathcal{Z}$. Fix $s>0$, by lemma \ref{lem: admissible cover}, $s$-admissible coverings of $B_x(r)\cap \mathcal{Z}$ exist.

 Let $\{B_{x_i}(r_i)\}$ be an $s$-admissible covering of $B_x(r)\cap \mathcal{Z}$. Let $\mathcal{E}_i=\mathcal{Z}\cap B_{x_i}(r_i)$. Then the family $\{\big(\mathcal{E}_i, B_{x_i}(r_i)\big)\}$ satisfies conditions (1), (2) of lemma \ref{lem: strong admissible cover}, and $\sum_i r_i^2 \le K_1\, r^2$. However, it may not satisfy condition (3). In the following, we will give a procedure to adjust the family, such that at each step the covering still satisfies property (2) of $s$-admissibility, and after finitely many steps of adjustments, the family will satisfy property (3) of lemma \ref{lem: strong admissible cover}. At the same time, $\sum_i r_i^2$ is being contorlled throughout the adjustments.
 
Assume $\{B_{x_i}(r_i)\}$ is an $s$-admissible covering of $B_x(r)\cap \mathcal{Z}$, and $\mathcal{E}_i\subset B_{x_i}(r_i)$, $B_x(r)\cap \mathcal{Z}=\bigcup \mathcal{E}_i$. Assume $\big(\mathcal{E}_0,B_{x_0}(r_0)\big)$ does not satisfy property (3) of lemma \ref{lem: strong admissible cover}. Then $r_0>s$. 

By property (2) of $s$-admissibility, 
 $B_{x_0}(r_0)\cap D_\delta(r_0)$ is contained in the $2\bar\beta r_0$ neighborhood of a line. Thus one can cover $B_{x_0}(r_0)\cap D_\delta(r_0)$ by a family of no more than $[10/\bar \beta]$ balls with radius $4\bar\beta r_0$. Let $\{B_{y_j}(t_j)\}$ be this family. If $4\bar \beta r_0> s $, apply lemma \ref{lem: admissible cover} again to each ball $B_{y_j}(t_j)$ and replace it with an $s$-admissible covering of $B_{y_j}(t_j)\cap D_\delta(r_0)$. Otherwise keep the family $\{B_{y_j}(t_j)\}$ as it is. Let $\{B_{z_j}(l_j)\}$ be the result of this procedure. Then $\{B_{z_j}(l_j)\}$ covers $B_{x_0}(r_0)\cap D_\delta(r_0)$, and it has the following properties
\begin{enumerate}
\item $4\bar\beta s\le l_j\le 4\bar\beta r_0$ for each $j$,
\item $\sum_j l_j^2 \le [10/\bar \beta]\cdot K_1\,(4\bar \beta r_0)^2.$
\end{enumerate}
Take $\bar \beta\le 1/(320 K_1)$, then $\sum_j l_j^2 \le \frac{1}{2}r_0^2$. 

The adjustment of the family $\{\big(\mathcal{E}_i,B_{x_i}(r_i)\big)\}$ is defined as follows. First, remove $(\mathcal{E}_0,B_{x_0}(r_0))$ from the family, and add $(\mathcal{E}_0\backslash D_\delta(r_0),B_{x_0}(r_0))$ into the family. 
Next, add the family $\{\big(\mathcal{E}_0\cap B_{z_j}(l_j), B_{z_j}(l_j)\big)\}$ constructed from the previous paragraph into this family.

This adjustment replaces an element $(\mathcal{E}_0,B_{x_0}(r_0))$ which does not satisfy property (3) of lemma \ref{lem: strong admissible cover} by a family of balls, such that the biggest ball in this family has the same radius $r_0$ and satisfies property (3). The rest of the balls have radius in the interval $[4\bar\beta s,4\bar\beta r_0]$ and their 2-dimensional volume is bounded by $\frac{1}{2}r_0^2$. Moreover, the new family still satisfies property (2) of lemma \ref{lem: admissible cover}. Therefore, after finitely many times of adjustments, we will obtain a family that satisfies conditions (1), (2), (3), with 2-dimensional volume 
$$
\sum_i r_i^2 \le 2K_1\, r^2,
$$
hence the lemma is proved.
\end{proof}

{\bf Step 6.}
Given $s\in(0,1)$, we use lemma \ref{lem: strong admissible cover} to construct a covering of $\mathcal{Z}\cap B(1)$ by a family of balls $\{B_{x_i}(r_i)\}$ with radius $r_i\in[4\bar\beta s,s]$, such that the 2-dimensional volume of the covering is bounded.

We call a family $\{(\mathcal{E}_i,B_{x_i}(r_i))\}$ a split-covering of a set $A$, if $\mathcal{E}_i\subset B_{x_i}(r_i)$, and $A=\bigcup \mathcal{E}_i$.

If a split-covering of $\mathcal{Z}\cap B_x(r)$ satisfies the properties given by lemma \ref{lem: strong admissible cover}, we say that it is strongly $s$-admissible. 

Let $\mathcal{S}$ be a strongly $s$-admissible split-covering of $\mathcal{Z}\cap B(1)$. 
For every $B_{x_i}(r_i)\in\mathcal{S}$, if $r_i\le s$, we say it is of type I. Otherwise, we say it is of type II. Assume $B_{x_i}(r_i)$ is a ball of type II, then the function $\mathcal{I}(x,r)$ is at most $\Lambda-\delta$ for $x\in \mathcal{E}_i$, $r_i\le \beta r_i/2$. 
There exists a universal constant $L$ such that $\mathcal{E}_i$ can be covered by $L$ balls $B_{y_j}(\beta r_i/512)$ with radius $(\beta r_i/512)$. Therefore, for each ball, the set $\mathcal{E}_i\cap B_{y_j}(\beta r_i/512)$ has a strongly $s$-admissible split-covering, with $\Lambda$ replaced by $\Lambda-\delta$.

Change $(B_{x_i}(r_i),\mathcal{E}_i)$ to the union of the $L$ strongly $s$-admissible split-coverings of 
$\mathcal{E}_i\cap B_{y_j}(\beta r_i/512)$,
we obtain a split-covering of $\mathcal{E}_i$ with 2-dimensional volume at most $LK_4(\beta r_i/512)^2$. Define an operation $\mathcal{G}$ on $\mathcal{S}$, such that $\mathcal{G}(\mathcal{S})$ is constructed from $\mathcal{S}$ by replacing every type II element in $\mathcal{S}$ with the union of the $L$ split-coverings described above.

Notice that for the balls $B_{y_j}(\beta r_i/512)$, the upper bound $\Lambda$ is replaced by $\Lambda-\delta$. Therefore, this procedure can only be carried for at most $N=\lceil\frac{\Lambda}{\delta}\rceil$ times. After that, every ball in $\mathcal{G}^{(N)}(\mathcal{S})$ is of type I. Namely, every ball in $\mathcal{G}^{(N)}(\mathcal{S})$ has radius in the interval $[4\bar\beta s,s]$. 

Let $V_n$ be the 2 dimensional volume of $\mathcal{G}^{(n)}(\mathcal{S})$, then we have 
$$V_{n+1}\le (1+LK_4(\beta/512)^2)V_n.$$ Therefore the total 2-dimensional volume of $\mathcal{G}^{(n)}(\mathcal{S})$ is bounded by
$$V_n\le  (1+LK_4(\beta/512)^2)^NK_4.$$

Since $s$ can be taken to be arbitrarily small, the Minkowski content of $\mathcal{Z}\cap B(1)$ is bounded by a contant $K$ depending on $\Lambda$, $\epsilon$ and $C$.

By rescaling, we conclude that the Minkowski content of $\mathcal{Z}\cap B_x(r)$ is bounded by $K\,r^2$. Since the Minkowski content bounds the Hausdorff measure, there exists a constant $K'$ depending on $\Lambda$, $\epsilon$ and $C$, such that
\begin{equation}\label{eqn: bound on measure}
\mathcal{H}_2(\mathcal{Z}\cap B_x(r))\le K'\,r^2.
\end{equation}

{\bf Step 7.}
So far we have been treating theorem \ref{thm: NV} as a ``black box'', and we used it to prove an upper bound for the Minkowski content of $\mathcal{Z}$.  It turns out that a more careful look at the proof of theorem \ref{thm: NV} also renders a rectifiable map for $\mathcal{Z}$, hence it concludes the proof of theorem \ref{thm: covering argument}.

Another way to show the rectifiability of $\mathcal{Z}$ without opening the ``black box'' is to cite the following theorem of Azzam and Tolsa. 
\footnote{
As Aaron Naber kindly pointed out to the author, this argument could be misleading, because it actually takes an unnecessary detour when all the proofs are unfolded. Nevertheless, it may serve the readers who want to verify the result and are willing to take the established theorems for granted.}

\begin{Theorem}[\cite{azzam2015characterization}, Corollary 1.3] \label{thm: AT}
Assume $S\subset B(2)$ is a $\mathcal{H}_2$-measurable set and has finite Hausdorff measure, let $\lambda$ be the restriction of $\mathcal{H}_2$ to $S$. Assume that for $\lambda$-a.e. $z$, 
$$ \int_0^1 \frac{D_{\lambda}^2(z,s)}{s} \,ds  <+\infty,
$$
then $S$ is $2$-rectifiable.
\end{Theorem}

Now invoke theorem \ref{thm: AT} and let $S$ be the set $\mathcal{Z}$.
By \eqref{eqn: bound on measure},

\begin{align*}
\int_{B(1)} \int_0^1 \frac{D_\lambda^2(z,s)}{s}\,ds \,d\lambda(z) 
\le & 
C\int_{B(1)}\int_0^1\int_{B_z(s)}\frac{W_{2s}^{32s}(p)}{s^3}\,d\lambda(p)\,ds\,d\lambda(z)
\\
\le & C\int_{B(2)}\int_0^1\int_{B_p(s)} \frac{W_{2s}^{32s}(p)}{s^3}\,d\lambda(z)\,ds\,d\lambda(p)
\\
\le & C K' \int_{B(2)}\int_0^1 \frac{W_{2s}^{32s}(p)}{s}\,ds\,d\lambda(p)
\end{align*}
The same estimate as \eqref{eqn: bound int W/t} gives
$$\int_0^1 \frac{W_{2s}^{32s}(p)}{s}\,ds\le \ln(16)\Lambda.$$
Thus
$$
CK' \int_{B(2)}\int_0^1 \frac{W_{2s}^{32s}(p)}{s}\,ds\,d\lambda(p)\le 4C(K')^2\ln(16)\Lambda < \infty.
$$
Therefore, the conditions of theorem \ref{thm: AT} are satisfied for $\mathcal{Z}\cap B(1)$, hence $\mathcal{Z}\cap B(1)$ is a rectifiable set, and the result is proved.
\end{proof}

\begin{proof}[Proof of theorem \ref{thm: main}]
Let $R_0$ be the constant given by proposition \ref{p: estimate of D}. Cover $  B_{x_0}(R)$ by finitely many Euclidean balls of radius $R_0/32$. Let $  B_{x_i}(R_0/32)$ be such a ball,
we claim that there exists a constant $C$ such that
$$
\mathcal{I}(x,r)=N_\phi(x,r)+Cr^2
$$
is a taming function for $Z\cap B_{x_i}(R_0/16)$ on the ball $B_{x_i}(R_0/16)$.

In fact, it follows from the definition that $N_\phi(x,r)$ is non-negative and continuous. By equation \eqref{eqn: r der of N}, there exists $C_1>0$ such that $\mathcal{I}_1(x,r)=N_\phi(x,r)+C_1r^2$ is increasing in $r$. By proposition \ref{p: estimate of D}, there exists $C_2$, such that for $\mathcal{I}_2(x,r)=\mathcal{I}_1(x,r)+C_2r^2$, one has
$$
D_\mu^2(x,r)\le \frac{C_1}{r^2} \int_{  B_x(r)} [\mathcal{I}_2(32r)-\mathcal{I}_2(2r)]d\mu(x)
$$
for every Radon measure supported in $Z\cap   B_{x_i}(R_0)$ and $r\le 8R_0$, thus $\mathcal{I}_2$ satisfies condition (3) of definition \ref{def: tame}.

Notice that since $\mathcal{I}_1(x,r)$ is increasing in $r$, for $\tilde\beta>0$, the inequality
$$
\mathcal{I}_2(x,2r)-\mathcal{I}_2(x,\tilde\beta r)<\delta
$$
implies that $r<\sqrt{\delta/(4C_2)}$. Therefore, lemma \ref{l: cor of compactness 2} implies $\mathcal{I}_2$ satisfies condition (\ref{con: pinching}) of definition \ref{def: tame}.

In conclusion, $\mathcal{I}_2(x,r)$ is a taming function for $Z$ on $B_{x_i}(R_0/16)$, therefore theorem \ref{thm: main} follows from theorem \ref{thm: covering argument}.
\end{proof}

\bibliographystyle{amsalpha}
\bibliography{references}

\end{document}